 \newtheoremstyle{mytheorem}
 {3pt}
 {3pt}
 {\slshape}
 {}
 {\bfseries}
 {.}
 { }
 {}
\numberwithin{equation}{section}
\theoremstyle{theorem}
\newtheorem{theorem}{Theorem}[section]
\newtheorem*{theorem*}{Theorem}
\newtheorem{corollary}[theorem]{Corollary}
\newtheorem{lemma}[theorem]{Lemma}
\providecommand{\customgenericname}{}
\newcommand{\newcustomtheorem}[2]{%
	\newenvironment{#1}[1]
	{%
		\renewcommand\customgenericname{#2}%
		\renewcommand\theinnercustomgeneric{##1}%
		\innercustomgeneric
	}
	{\endinnercustomgeneric}
}
\theoremstyle{definition}
\newtheorem*{example*}{Example}
\newtheorem*{examples*}{Examples}
\newtheorem*{remark*}{Remark}
\newtheorem*{remarks*}{Remarks}
\newtheoremstyle{named}{}{}{\itshape}{}{\bfseries}{.}{.5em}{#1\thmnote{ #3}}
\theoremstyle{named}
\newtheorem*{namedentry}{Entry}
\newcommand{\Keywords}[1]{\ifthenelse{\isempty{#1}}{}{\smallskip \smallskip \noindent \textbf{Keywords}. #1}}
\newcommand{\MSC}[2][2020]{\ifthenelse{\isempty{#2}}{}{\smallskip \smallskip \noindent \textbf{#1MSC}. #2}}
\newcommand{\abstractnote}[1]{\ifthenelse{\isempty{#1}}{}{\smallskip \smallskip \noindent \textsuperscript{\dag}#1}}
\def\specialsection{\@startsection{section}{1}%
  \z@{\linespacing\@plus\linespacing}{.5\linespacing}%
  {\normalfont}}
\def\section{\@startsection{section}{1}%
  \z@{.7\linespacing\@plus\linespacing}{.5\linespacing}%
  {\normalfont\scshape}}
\patchcmd{\@settitle}{\uppercasenonmath\@title}{\Large\boldmath}{}{}
\patchcmd{\@settitle}{\begin{center}}{\begin{flushleft}}{}{}
\patchcmd{\@settitle}{\end{center}}{\end{flushleft}}{}{}
\patchcmd{\@setauthors}{\MakeUppercase}{\normalsize}{}{}
\patchcmd{\@setauthors}{\centering}{\raggedright}{}{}
\patchcmd{\section}{\scshape}{\large\bfseries\boldmath}{}{}
\patchcmd{\subsection}{\bfseries}{\bfseries\boldmath}{}{}
\renewcommand{\@secnumfont}{\bfseries}
\patchcmd{\@startsection}{\@afterindenttrue}{\@afterindentfalse}{}{}
\patchcmd{\abstract}{\leftmargin3pc}{\leftmargin1pc}{}{}
\def\maketitle{\par
  \@topnum\z@ 
  \@setcopyright
  \thispagestyle{empty}
  \ifx\@empty\shortauthors \let\shortauthors\shorttitle
  \else \andify\shortauthors
  \fi
  \@maketitle@hook
  \begingroup
  \@maketitle
  \toks@\@xp{\shortauthors}\@temptokena\@xp{\shorttitle}%
  \toks4{\def\\{ \ignorespaces}}
  \edef\@tempa{%
    \@nx\markboth{\the\toks4
      \@nx\MakeUppercase{\the\toks@}}{\the\@temptokena}}%
  \@tempa
  \endgroup
  \c@footnote\z@
  \@cleartopmattertags
}
\newcommand{\sL}{\mathscr{L}}
\newcommand{\sR}{\mathscr{R}}
\newcommand{\sH}{\mathscr{H}}
\newcommand{\sM}{\mathscr{M}}
\newcommand{\sN}{\mathscr{N}}
\newcommand{\sP}{\mathscr{P}}
\newcommand{\sQ}{\mathscr{Q}}
\newcommand{\sF}{\mathscr{F}}
\newcommand{\sG}{\mathscr{G}}
\newcommand{\sA}{\mathscr{A}}
\newcommand{\sB}{\mathscr{B}}
\newcommand{\tA}{\tilde{A}}
\newcommand{\tB}{\tilde{B}}
\newcommand{\hm}{\hat{m}}
\newcommand{\hn}{\hat{n}}
\newcommand{\hw}{\hat{w}}
\newcommand{\chm}{\check{m}}
\newcommand{\chn}{\check{n}}
\newcommand{\chw}{\check{w}}
\newcommand{\cI}{\mathcal{I}}
\title[General coefficient-vanishing results]{General coefficient-vanishing results associated with theta series}
\author[S. Chern]{Shane Chern}
\address[S. Chern]{Department of Mathematics and Statistics, Dalhousie University, Halifax, Nova Scotia, B3H 4R2, Canada}
\email{chenxiaohang92@gmail.com}
\author[D. Tang]{Dazhao Tang}
\address[D. Tang]{School of Mathematic Sciences, Chongqing Normal University, Chongqing 401331, P.R. China}
\email{dazhaotang@sina.com}
\date{}
\begin{document}

\maketitle

\sloppy

\begin{abstract}

There are a number of sporadic coefficient-vanishing results associated with theta series, which suggest certain underlying patterns. By expanding theta powers as linear combinations of products of theta functions, we present two strategies that will provide a unified treatment. Our approaches rely on studying the behavior of products of two theta series under the action of the huffing operator. For this purpose, some explicit criteria are given. We may use the presented methods to not only verify experimentally discovered coefficient-vanishing results, but also to produce a series of general phenomena.

\Keywords{Ramanujan's theta series, vanishing coefficient, huffing operator, linear congruence, sublattice of $\mathbb{Z}^2$.}

\MSC{11F27, 11B65, 11J13, 11J20.}
\end{abstract}

\section{Introduction}

Given a Laurent series $G(q)=\sum_{n}g_nq^n$, a particularly interesting problem is to determine if there exists an arithmetic progression $Mn+w$ such that the coefficients of $G(q)$ indexed by this arithmetic progression vanish, namely, $g_{Mn+w}=0$. If so, we say $G(q)$ possesses the \emph{coefficient-vanishing} property.

For $G(q)$ a quotient of several \emph{Ramanujan's theta series},
\begin{align}\label{eq:f(a,b)-summation}
	f(a,b) := \sum_{n\in \mathbb{Z}}a^{n(n+1)/2}b^{n(n-1)/2},
\end{align}
the study of its vanishing coefficients has a long history. In the
literature, such theta quotients are usually expressed in terms of the following product form of $f(a,b)$ in light of Jacobi's triple product identity \cite[p.~35, Entry 19]{Ber1991}:
\begin{align}\label{eq:f(a,b)-product}
	f(a,b) = (-a,-b,ab;ab)_\infty,
\end{align}
where the \emph{$q$-Pochhammer symbols} are defined by
\begin{align*}
	(A;q)_\infty &:= \prod_{k\ge 0}(1-Aq^k),\\
	(A_1,\ldots,A_\ell;q)_\infty &:= (A_1;q)_\infty \cdots (A_\ell;q)_\infty.
\end{align*}

The very first consideration along this line was initiated by Richmond and Szekeres \cite{RS1978}, who proved with the use of the circle method that $h(4n+2)=0$ for the \emph{Ramanujan--G\"ollnitz--Gordon continued fraction}:
\begin{align}\label{eq:RS}
	\sum_{n\ge 0} h(n)q^n:=\dfrac{(q,q^7;q^8)_\infty}{(q^3,q^5;q^8)_\infty}.
\end{align}
Their result was subsequently extended by Andrews and Bressoud \cite{AB1979} to arbitrary moduli with recourse to Ramanujan's ${}_{1}\psi_{1}$ summation formula. Through a similar analysis, a more general result was discovered by Alladi and Gordon \cite{AG1994}. Other results sharing the same nature can be found in Chan and Yesilyurt \cite{CY2006}, Mc Laughlin \cite{Mc20L15,McL21b}, Tang \cite{Tang2019b}, Chern and Tang \cite{CT2020,CT2021}, and Du and Tang \cite{DT2022}.

Another important source of coefficient-vanishing properties associated with theta series is a recent paper of Hirschhorn \cite{Hir2019}, in which it was shown that $\gamma_{1,1,5}(5n+2)=\gamma_{1,1,5}(5n+4)=0$ where
\begin{align*}
	\sum_{n\ge 0}\gamma_{1,1,5}(n)q^n:=(-q,-q^4;q^5)_\infty (q,q^9;q^{10})_\infty^3.
\end{align*}
The basic idea of Hirschhorn's approach relies on reformulating this $q$-product back to a sum-like expression by \eqref{eq:f(a,b)-summation} and then making a suitable change of variables for the sum indices. After some primary considerations due to Tang \cite{Tang2019} and Baruah and Kaur \cite{BK2020}, Mc Laughlin \cite{McL2021} moved a huge step forward by establishing new results under various moduli. For other related results, see, for instance, Mc Laughlin and Zimmer \cite{MZ2022}, and Kuar and Vandna \cite{VK2022,VK2022b}.

To tackle the coefficient-vanishing puzzle associated with theta series, various approaches were applied, including the circle method \cite{RS1978}, $q$-hypergeometric transformations \cite{AB1979,AG1994}, computer algebra \cite{XZ2020}, and explicit series dissections \cite{DX2021,Hir2002,Lin2013,Tang2021,TX2020}. However, a particularly powerful method is the one that was first utilized by Andrews and Bressoud in \cite{AB1979}, relying on subtle substitutions of sum indices for the summation form of theta series. This idea will be the main ingredient in our work.

Moreover, in a series of papers of Tang \cite{Tang2022a,Tang2022b,Tang2023,Tang2022c}, it was discovered that the previous coefficient-vanishing results on theta series are only the tip of the iceberg. More precisely, Tang considered five families of coefficient functions:
\begin{align}
	\sum_{n}\alpha_{i,j,r,\ell,m}(n)q^n &:= \frac{(q^{i},q^{r-i};q^{r})_\infty^\ell}{(q^{j},q^{r-j};q^{r})_\infty^m},\label{eq:def-alpha}\\
	\sum_{n}\beta_{i,j,r,\ell,m}(n)q^n &:= \frac{(q^{i},q^{r-i};q^{r})_\infty^\ell}{(-q^{j},-q^{r-j};q^{r})_\infty^m},\label{eq:def-beta}\\
	\sum_{n}\gamma_{i,j,r,\ell,m}(n)q^n &:= (-q^{i},-q^{r-i};q^{r})_\infty^\ell (q^{j},q^{2r-j};q^{2r})_\infty^m,\label{eq:def-gamma}\\
	\sum_{n}\delta_{i,j,r,\ell,m}(n)q^n &:= (q^{i},q^{r-i};q^{r})_\infty^\ell (-q^{j},-q^{2r-j};q^{2r})_\infty^m,\label{eq:def-delta}\\
	\sum_{n}\epsilon_{i,j,r,\ell,m}(n)q^n &:= (q^{i},q^{r-i};q^{r})_\infty^\ell (q^{j},q^{2r-j};q^{2r})_\infty^m.\label{eq:def-epsilon}
\end{align}
It was observed by him that the coefficient-vanishing property still holds for a vast number of choices of $i$, $j$ and $r$ even if the powers $\ell$ and $m$ vary. In particular, dozens of such relations were established in an explicit way, while a list of general conjectures was further proposed. These observations stimulate our investigation from a broader setting.

Define the \emph{huffing operator} \cite[Eq.~(19.4.7)]{Hir2019} for $G(q)=\sum_{n}g_nq^n$ a Laurent series and $M$ a positive integer by
\begin{align}\label{eq:U-operator}
	\mathbf{H}_M\big(G(q)\big):=\sum_{n} g_{Mn}q^{Mn}.
\end{align}
Then $g_{Mn+w}=0$ is equivalent to
\begin{align}
	\mathbf{H}_M\big(q^{-w}G(q)\big) = 0.
\end{align}

We first state our coefficient-vanishing results with the above $\mathbf{H}$-operator notation.

\begin{theorem}\label{th:Type-I}
	Let $\mu\ge 1$, $\ell\ge 0$ and $m\ge 0$ be integers.
	\begin{enumerate}[label=\textup{(\roman*)}, widest=iii, itemindent=*, leftmargin=*]
		\item If $M=2\ell+6m+3$ and $\sigma=-(2\ell+4m+2)k$, then for $\kappa\in\{0,1\}$, $\lambda\in\{0,1\}$ and any $k$ such that $\gcd(k,M)=1$, 
		\begin{align}\label{eq:I-1-e-o}
			\mathbf{H}_{M}\Bigg(q^{\sigma}\cdot f\big((-1)^{\kappa}q^{k},(-1)^{\kappa}q^{\mu M-k}\big)^{2\ell} \bigg(\frac{f\big({-q^{2k}},-q^{\mu M-2k}\big)}{f\big((-1)^{\lambda}q^{k},(-1)^{\lambda}q^{\mu M-k}\big)}\bigg)^{2m+1}\Bigg)=0.
		\end{align}
		
		\item If $M=8\ell+6m+3$ and $\sigma=-(6\ell+4m+2)k$, then for $\kappa\in\{0,1\}$, $\lambda\in\{0,1\}$ and any $k$ such that $\gcd(k,M)=1$, 
		\begin{align}\label{eq:I-2-e-o}
			\mathbf{H}_{M}\Bigg(q^{\sigma}\cdot f\big((-1)^{\kappa}q^{2k},(-1)^{\kappa}q^{\mu M-2k}\big)^{2\ell} \bigg(\frac{f\big({-q^{2k}},-q^{\mu M-2k}\big)}{f\big((-1)^{\lambda}q^{k},(-1)^{\lambda}q^{\mu M-k}\big)}\bigg)^{2m+1}\Bigg)=0.
		\end{align}
		
		\item If $M=4\ell+6m+5$ and $\sigma=-(2\ell+2m+2)k$, then for $\kappa\in\{1\}$, $\lambda\in\{0,1\}$ and any $k$ such that $\gcd(k,M)=1$, 
		\begin{align}\label{eq:I-2-o-e}
			\mathbf{H}_{M}\Bigg(q^{\sigma}\cdot f\big((-1)^{\kappa}q^{2k},(-1)^{\kappa}q^{\mu M-2k}\big)^{2\ell+1} \bigg(\frac{f\big({-q^{2k}},-q^{\mu M-2k}\big)}{f\big((-1)^{\lambda}q^{k},(-1)^{\lambda}q^{\mu M-k}\big)}\bigg)^{4m+2}\Bigg)=0.
		\end{align}
	\end{enumerate}
\end{theorem}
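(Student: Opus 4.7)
My plan is to follow the two-ingredient strategy announced in the abstract. \textbf{Step one:} rewrite the argument of $\mathbf{H}_{M}$ as a finite $\mathbb{Z}$-linear combination of products of two theta series. \textbf{Step two:} invoke the explicit huffing-operator criteria, presumed to be established in a preceding section of the paper, to annihilate each summand.

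For Step one, the summation form \eqref{eq:f(a,b)-summation} interprets $f((-1)^{\kappa}q^{k},(-1)^{\kappa}q^{\mu M-k})^{2\ell}$ as a $2\ell$-fold sum over $\mathbb{Z}^{2\ell}$. I would dissect this index lattice by a rank-$2$ sublattice---the ``expansion of theta powers as linear combinations of products of theta functions'' promised in the abstract---to rewrite the power as a finite sum of terms $f(A_{1},B_{1})\,f(A_{2},B_{2})$ with each $A_{j}B_{j}$ a specific power of $q$. The quotient $f(-q^{2k},-q^{\mu M-2k})/f((-1)^{\lambda}q^{k},(-1)^{\lambda}q^{\mu M-k})$ I would handle separately, using \eqref{eq:f(a,b)-product} and the elementary factorization $1-q^{2a}=(1-q^{a})(1+q^{a})$, which cancels the denominator and turns the quotient into a single bilinear theta product up to an $\eta$-type factor inert under $\mathbf{H}_{M}$. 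Raising the simplified quotient to the $(2m+1)$st power via the same dissection identity and combining with the expansion of the $2\ell$-factor then produces the desired linear combination.

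Each summand in Step two has the shape $q^{\tau}f(A_{1},B_{1})\,f(A_{2},B_{2})$, whose $q$-exponent over the double summation $(u,v)\in\mathbb{Z}^{2}$ is $Q_{1}(u)+Q_{2}(v)+\tau$ for specific integral quadratic forms $Q_{1},Q_{2}$ determined by the coset. The huffing criterion for $\mathbf{H}_{M}$-vanishing on such a product reduces the claim to showing that $Q_{1}(u)+Q_{2}(v)+\tau\equiv 0\pmod{M}$ has no solution $(u,v)\in\mathbb{Z}^{2}$ compatible with the parity conditions imposed by $\kappa$, $\lambda$, and the coset. The specific identities $M=2\ell+6m+3$, $M=8\ell+6m+3$, $M=4\ell+6m+5$, and the linear-in-$k$ formulas for $\sigma$, are calibrated precisely so that $-\tau\bmod M$ is a quadratic non-residue with respect to the relevant binary form on every term; this is where the hypothesis $\gcd(k,M)=1$ is used, to make $k$ invertible modulo $M$ when completing the square.

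The main difficulty, as I see it, is this uniform verification in Step two. The four sign patterns $(\kappa,\lambda)\in\{0,1\}^{2}$ in cases (i) and (ii), the two patterns in (iii), together with the cosets of the dissection, produce a long list of parity sub-classes, and the Diophantine obstruction must hold simultaneously across all of them. The three distinct moduli patterns are engineered precisely to accommodate this uniformity, and tracking how the parity twists $(-1)^{\kappa u}$ and $(-1)^{\lambda v}$ interact with the quadratic congruence modulo $M$ in each case---especially the delicate switch between the parametrizations $2\ell+6m+3$ and $8\ell+6m+3$ coming from the doubling $k\leftrightarrow 2k$---is the technical heart of the argument.
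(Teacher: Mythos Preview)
Your Step-one treatment of the theta quotient is not correct. The factorization $1-q^{2a}=(1-q^{a})(1+q^{a})$ does \emph{not} cancel the denominator of $f(-q^{2k},-q^{\mu M-2k})/f((-1)^{\lambda}q^{k},(-1)^{\lambda}q^{\mu M-k})$: the numerator is a product over residues $2k,\mu M-2k$ modulo $\mu M$, while the denominator runs over residues $k,\mu M-k$, and these do not match up after splitting. The paper handles this quotient via the \emph{quintuple product identity}, which rewrites the quotient (times an $\eta$-type factor inert under $\mathbf{H}_{M}$) as a \emph{sum of two} theta series in base $q^{3\mu M}$; only after this step does the Mc~Laughlin--Schr\"oter expansion apply to the $m$-th power.

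More seriously, your Step two is the wrong mechanism for this theorem. You propose to show that each individual summand $q^{\tau}f(A_{1},B_{1})f(A_{2},B_{2})$ satisfies $\mathbf{H}_{M}=0$ by arguing that the congruence $Q_{1}(u)+Q_{2}(v)+\tau\equiv0\pmod{M}$ has no solutions. In fact these congruences \emph{do} have solutions for generic summands, so $\mathbf{H}_{M}$ applied to a single summand is typically nonzero. The paper's outline is explicit that Theorem~\ref{th:Type-I} requires the \emph{pairing-and-cancelation} strategy, not the uniform-vanishing one (which is reserved for Theorem~\ref{th:Type-II}). Concretely, the expansion of the $\ell$-th power carries a hidden symmetry $\sM_{s}^{(\ell)}=\sM_{\ell-s}^{(\ell)}$ (and similarly for the quotient power), which pairs the effective theta factors $\sA_{I}\leftrightarrow\sA_{I\!I}$ and $\sB_{I}\leftrightarrow\sB_{I\!I}$. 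The proof then checks, via divisibility criteria on explicit linear combinations of the parameters (Theorems~\ref{th:cancel-1} and~\ref{th:cancel-2}), that
\[
\mathbf{H}_{M}\big(q^{\sigma}\sA_{I}\sB_{I}\big)=\pm\,\mathbf{H}_{M}\big(q^{\sigma}\sA_{I\!I}\sB_{I\!I}\big),
\]
with the sign arranged so that the pair cancels when recombined with the $(-1)^{\kappa\ell}$ and $(-1)^{(\lambda+1)m}$ prefactors from the pairing. The specific formulas for $M$ and $\sigma$ are calibrated to make these sign-matching divisibility checks go through, not to create a quadratic-residue obstruction.
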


\begin{theorem}\label{th:Type-I.2}
	Let $\mu\ge 1$, $\ell\ge 0$ and $m\ge 0$ be integers.
	\begin{enumerate}[label=\textup{(\roman*)}, widest=ii, itemindent=*, leftmargin=*]
		\item
		If $M=4\ell+2m+3$ and $\sigma=-(3\ell+m+2)k$, then for $(\kappa,\lambda)\in\{(0,1),(1,0)\}$ and any $k$ such that $\gcd(k,M)=1$, 
		\begin{align}\label{eq:I.2}
			\mathbf{H}_{M}\Big(q^{\sigma}\cdot f\big((-1)^{\kappa}q^{k},(-1)^{\kappa}q^{\mu M-k}\big)^{2\ell+1} f\big((-1)^{\lambda}q^{\mu M+k},(-1)^{\lambda}q^{\mu M-k}\big)^{2m+1}\Big)=0.
		\end{align}
		
		\item
		If $M=2\ell+4m+3$ and $\sigma=-(2\ell+2m+2)k$, then for $(\kappa,\lambda)\in\{(0,1),(1,1)\}$ and any $k$ such that $\gcd(k,M)=1$, 
		\begin{align}\label{eq:I.2-2}
			\mathbf{H}_{M}\Big(q^{\sigma}\cdot f\big((-1)^{\kappa}q^{k},(-1)^{\kappa}q^{\mu M-k}\big)^{2\ell+1} f\big((-1)^{\lambda}q^{\mu M+2k},(-1)^{\lambda}q^{\mu M-2k}\big)^{2m+1}\Big)=0.
		\end{align}
	\end{enumerate}
\end{theorem}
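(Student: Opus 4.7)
The plan is to follow the second strategy outlined in the introduction: reduce the vanishing of the huffing operator on a product of two \emph{odd} theta powers to the vanishing on a finite linear combination of products of two single theta functions, where an explicit criterion can be applied. Since the huffing operator $\mathbf{H}_M$ is $\mathbb{Z}$-linear, it is enough to verify vanishing term by term after the reduction.

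First, I would invoke a power-expansion identity (of the sort that should be established in an earlier section) writing every odd power $f(a,b)^{2n+1}$ as a finite $\mathbb{Z}$-linear combination of $q$-shifted single theta values $q^{\phi} f(A,B)$. The prototypes are Jacobi's identity and its generalized dissection form, but the version needed here is one adapted to the modulus $M$ in play. Applying this expansion to both factors in \eqref{eq:I.2}, namely to $f((-1)^{\kappa}q^{k},(-1)^{\kappa}q^{\mu M-k})^{2\ell+1}$ and to $f((-1)^{\lambda}q^{\mu M+k},(-1)^{\lambda}q^{\mu M-k})^{2m+1}$, and analogously to the two factors in \eqref{eq:I.2-2}, rewrites the integrand as a $\mathbb{Z}$-linear combination of $q$-shifted products $q^{\tau} f(A,B) f(C,D)$, with each $(A,B,C,D)$ a quadruple of signed powers of $q$.

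Next, for each resulting summand I would appeal to the explicit vanishing criterion for
\[
\mathbf{H}_M\bigl(q^{\sigma+\tau}\, f(A,B)\, f(C,D)\bigr)
\]
that the paper prepares in earlier sections. Expanding both factors by \eqref{eq:f(a,b)-summation} presents the quantity as a double sum over $(s,t)\in\mathbb{Z}^2$ whose exponent is a quadratic form in $(s,t)$; the criterion then provides a unimodular change of variables on $\mathbb{Z}^2$ preserving the residue of this exponent modulo $M$ while reversing the sign of the summand, so that the solution lattice inside $\mathbb{Z}^2$ breaks into sign-cancelling pairs.

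The main obstacle is the arithmetic bookkeeping: one must check that the prescribed $M$ and $\sigma$ satisfy the hypothesis of the criterion uniformly for every term produced by the expansion. For part (i), with $M=4\ell+2m+3$ and $\sigma=-(3\ell+m+2)k$, this reduces to solving a linear congruence modulo $M$ built from the quadratic form attached to $f(A,B)f(C,D)$, and the admissible sign pairs $(\kappa,\lambda)\in\{(0,1),(1,0)\}$ are exactly those making the congruence consistent; the parallel analysis for part (ii), with $M=2\ell+4m+3$, $\sigma=-(2\ell+2m+2)k$ and $(\kappa,\lambda)\in\{(0,1),(1,1)\}$, proceeds by the same template. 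In both parts the assumption $\gcd(k,M)=1$ ensures that $k$ is invertible modulo $M$, which is what lets us solve the congruence for each term and conclude the term-by-term vanishing.
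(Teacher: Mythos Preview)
Your plan has a genuine gap: you are proposing what the paper calls the ``Type II'' strategy---showing that \emph{each} term $\mathbf{H}_M\bigl(q^{\sigma+\tau}f(A,B)f(C,D)\bigr)$ vanishes on its own via an internal sign-reversing involution on the solution lattice---but this theorem is of ``Type I'' and that approach does not go through here. Concretely, after expanding both odd powers, the generic cross term (with indices $\xi,\tau$ coming from the two expansions) does \emph{not} satisfy the divisibility conditions of the direct-vanishing criterion (Corollary~\ref{coro:UMH=0-J}); only the single leading term $\sA_0\sB_0$ does. For all the remaining terms one must instead exploit the \emph{pairing} symmetry built into the expansions (Corollaries~\ref{coro:power-pairing} and~\ref{coro:power-pairing-A'-A'}): each $\sA_s$ with $s\ge 1$ splits as $\sA_{s,I}+(-1)^{\kappa(2\ell+1)}\sA_{s,I\!I}$, and similarly for $\sB_t$, and the actual mechanism is
\[
\mathbf{H}_M(q^{\sigma}\sA_0\sB_I)=-\mathbf{H}_M(q^{\sigma}\sA_0\sB_{I\!I}),\quad
\mathbf{H}_M(q^{\sigma}\sA_I\sB_I)=-(-1)^{\kappa}\mathbf{H}_M(q^{\sigma}\sA_{I\!I}\sB_{I\!I}),
\]
and the symmetric relations, established via the cancellation criteria (Theorems~\ref{th:cancel-1} and~\ref{th:cancel-2}).

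So the missing idea is precisely this between-term cancellation: you need the refined expansion that pairs $s\leftrightarrow \ell-s$ (resp.\ $t\leftrightarrow m-t$), and then four separate verifications---one direct vanishing for $\sA_0\sB_0$, and three matched-pair cancellations for $\sA_0\sB_\bullet$, $\sA_\bullet\sB_0$, and $\sA_\bullet\sB_\bullet$. The restriction on $(\kappa,\lambda)$ enters not as a solvability condition for a single congruence but as the parity needed to make the signs in these three pairwise cancellations come out correctly. Your proposed route would work for Theorem~\ref{th:Type-II}, where the extra coprimality hypothesis $\gcd(2\ell+1,2m+1)=1$ is exactly what lets one solve the required congruence uniformly in $\xi,\tau$; no such hypothesis is present here, and without it the term-by-term vanishing fails.
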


\begin{theorem}\label{th:Type-I.3}
	Let $\mu\ge 1$, $\ell\ge 0$ and $m\ge 0$ be integers.
	\begin{enumerate}[label=\textup{(\roman*)}, widest=iii, itemindent=*, leftmargin=*]
		\item
		If $M=4\ell+2m+3$ and $\sigma=-(\ell+m+1)k$, then for $(\kappa,\lambda)\in\{(0,1),(1,0)\}$ and any $k$ such that $\gcd(k,M)=1$, 
		\begin{align}\label{eq:I.3-1}
			\mathbf{H}_{M}\Big(q^{\sigma}\cdot f\big((-1)^{\kappa}q^{k},(-1)^{\kappa}q^{\mu M-k}\big)^{2\ell+1} f\big((-1)^{\lambda}q^{k},(-1)^{\lambda}q^{2\mu M-k}\big)^{2m+1}\Big)=0.
		\end{align}
		
		\item
		If $M=2\ell+16m+9$ and $\sigma=-(2\ell+12m+7)k$, then for $(\kappa,\lambda)\in\{(0,1),(1,1)\}$ and any $k$ such that $\gcd(k,M)=1$, 
		\begin{align}\label{eq:I.3-2}
			\mathbf{H}_{M}\Big(q^{\sigma}\cdot f\big((-1)^{\kappa}q^{k},(-1)^{\kappa}q^{\mu M-k}\big)^{2\ell+1} f\big((-1)^{\lambda}q^{4k},(-1)^{\lambda}q^{2\mu M-4k}\big)^{2m+1}\Big)=0.
		\end{align}
		
		\item
		If $M=16\ell+2m+9$ and $\sigma=-(10\ell+2m+6)k$, then for $(\kappa,\lambda)\in\{(1,0),(1,1)\}$ and any $k$ such that $\gcd(k,M)=1$, 
		\begin{align}\label{eq:I.3-3}
			\mathbf{H}_{M}\Big(q^{\sigma}\cdot f\big((-1)^{\kappa}q^{2k},(-1)^{\kappa}q^{\mu M-2k}\big)^{2\ell+1} f\big((-1)^{\lambda}q^{k},(-1)^{\lambda}q^{2\mu M-k}\big)^{2m+1}\Big)=0.
		\end{align}
	\end{enumerate}
\end{theorem}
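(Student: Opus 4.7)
The plan is to reduce each of the three parts of Theorem~\ref{th:Type-I.3} to the vanishing criteria for products of two theta series under the huffing operator that are established earlier in the paper. The strategy parallels that of Theorems~\ref{th:Type-I} and~\ref{th:Type-I.2}: decompose each odd power $f(\cdot,\cdot)^{2\ell+1}$ and $f(\cdot,\cdot)^{2m+1}$ into a linear combination of products of two theta functions, and then verify the criteria termwise.

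First, I would invoke the theta-power expansion identity (the ``linear combination of products of two theta functions'' alluded to in the abstract), which writes $f(a,b)^{2L+1}$ as a sum, indexed by residue classes modulo $2L+1$, of products of two theta series whose arguments are explicit monomials in $a$ and $b$. Applying this identity to both odd-power factors in the integrand, the expression inside $\mathbf{H}_M$ becomes a double sum whose generic summand has the form
\begin{align*}
	q^{\sigma + \varphi(r,s)}\, f\big(A_r, B_r\big)\, f\big(C_s, D_s\big),
\end{align*}
where $\varphi(r,s)$ is a quadratic polynomial in $(r,s)$ with integer coefficients linear in $k$, and where the theta arguments $A_r, B_r, C_s, D_s$ are explicit monomials in $q$ together with a sign determined by $\kappa$, $\lambda$, $r$, and $s$.

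Second, for each summand I would verify that
\begin{align*}
	\mathbf{H}_M\big(q^{\sigma + \varphi(r,s)}\, f(A_r,B_r)\, f(C_s,D_s)\big) = 0
\end{align*}
by appealing to the two-theta criterion established earlier. That criterion translates termwise vanishing into a linear congruence on $(r,s)$ modulo $M$, coupled with parity constraints on $\kappa$ and $\lambda$. The moduli $M=4\ell+2m+3$, $M=2\ell+16m+9$, and $M=16\ell+2m+9$ are precisely those for which the relevant congruence admits no solution whenever $\gcd(k,M)=1$, and the shifts $\sigma=-(\ell+m+1)k$, $\sigma=-(2\ell+12m+7)k$, and $\sigma=-(10\ell+2m+6)k$ are chosen so that, after being combined with $q^{\varphi(r,s)}$, the total exponent lands in the residue class detectable by the criterion.

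The main obstacle will be the bookkeeping required to align $(M,\sigma,\kappa,\lambda)$ with the two-theta criterion uniformly in $(r,s)$. In particular, because the second theta factor in each part carries the doubled base $q^{2\mu M}$, the relevant congruences acquire an extra factor of $2$ (and of $4$ in part (ii), where the second theta is built from $q^{4k}$); this extra factor is what restricts the admissible parity pairs to $\{(0,1),(1,0)\}$ in part (i), $\{(0,1),(1,1)\}$ in part (ii), and $\{(1,0),(1,1)\}$ in part (iii), rather than the full set $\{0,1\}\times\{0,1\}$. Verifying that the stated $\sigma$ is compatible with the criterion for every summand in the expansion is the computational heart of the argument and would proceed case by case, with the numerical coincidences in the statement of the theorem being exactly what makes each check succeed.
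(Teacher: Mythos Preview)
Your proposal describes the wrong mechanism. You claim that after expanding both theta powers, each individual summand satisfies
\[
\mathbf{H}_M\big(q^{\sigma + \varphi(r,s)}\, f(A_r,B_r)\, f(C_s,D_s)\big) = 0,
\]
and that the moduli are ``precisely those for which the relevant congruence admits no solution.'' That is the Type~II strategy, which the paper uses for Theorem~\ref{th:Type-II}, not for Theorem~\ref{th:Type-I.3}. For the present theorem the individual summands do \emph{not} vanish under $\mathbf{H}_M$; the linear congruences \emph{do} have solutions, and the terms are in general nonzero.

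What the paper actually does is a pairing-and-cancelation argument. Using Corollary~\ref{coro:power-pairing}, each odd power is expanded so that, apart from a single distinguished term $\sA_0$ (resp.\ $\sB_0$), the remaining terms come in companion pairs $\sA_I,\sA_{I\!I}$ (resp.\ $\sB_I,\sB_{I\!I}$) sharing the same $q^M$-series coefficient. One then establishes four relations:
\[
\mathbf{H}_M(q^\sigma\sA_0\sB_0)=0,\quad
\mathbf{H}_M(q^\sigma\sA_0\sB_I)=-(-1)^{\lambda m}\mathbf{H}_M(q^\sigma\sA_0\sB_{I\!I}),
\]
\[
\mathbf{H}_M(q^\sigma\sB_0\sA_I)=-(-1)^{\kappa\ell}\mathbf{H}_M(q^\sigma\sB_0\sA_{I\!I}),\quad
\mathbf{H}_M(q^\sigma\sA_I\sB_I)=-(-1)^{\kappa\ell+\lambda m}\mathbf{H}_M(q^\sigma\sA_{I\!I}\sB_{I\!I}).
\]
Only the first is a genuine vanishing statement, proved via Corollary~\ref{coro:UMH=0-J}; the other three are \emph{sign-matching} identities proved via the cancelation Theorems~\ref{th:cancel-1} and~\ref{th:cancel-2}, and it is here that the specific parity constraints on $(\kappa,\lambda)$ enter. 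Your plan omits this pairing structure entirely, and the claimed ``no-solution'' congruence argument would fail on the cross terms.
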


\begin{theorem}\label{th:Type-II}
	Let $\mu\ge 1$, $\ell\ge 0$ and $m\ge 0$ be integers.
	\begin{enumerate}[label=\textup{(\roman*)}, widest=iii, itemindent=*, leftmargin=*]
		\item
		Assume that $\gcd(2\ell+1,2m+1)=1$. If $M=2\ell+4m+3$ and $\sigma=2(2m+1)^2 k$, then for $(\kappa,\lambda)\in\{(0,1),(1,0)\}$ and any $k$ such that $\gcd(k,M)=1$, 
		\begin{align}\label{eq:II-1}
			\mathbf{H}_{M}\Big(q^{\sigma}&\cdot f\big((-1)^{\kappa}q^{(2m+1)k},(-1)^{\kappa}q^{\mu M-(2m+1)k}\big)^{2\ell+1}\notag\\
			&\times f\big((-1)^{\lambda}q^{(2\ell+1)k},(-1)^{\lambda}q^{2\mu M-(2\ell+1)k}\big)^{2m+1}\Big)=0.
		\end{align}
		
		\item
		Assume that $\gcd(2\ell+1,2m+2)=1$. If $M=2\ell+4m+5$ and $\sigma=2(2m+2)^2 k$, then for $(\kappa,\lambda)\in\{(1,0),(1,1)\}$ and any $k$ such that $\gcd(k,M)=1$,  
		\begin{align}\label{eq:II-2}
			\mathbf{H}_{M}\Big(q^{\sigma}&\cdot f\big((-1)^{\kappa}q^{(2m+2)k},(-1)^{\kappa}q^{\mu M-(2m+2)k}\big)^{2\ell+1}\notag\\
			&\times f\big((-1)^{\lambda}q^{(2\ell+1)k},(-1)^{\lambda}q^{2\mu M-(2\ell+1)k}\big)^{2m+2}\Big)=0.
		\end{align}
		
		\item
		Assume that $\gcd(2\ell+2,2m+1)=1$. If $M=4\ell+2m+5$ and $\sigma=3(2\ell+2)^2 k$, then for $(\kappa,\lambda)\in\{(0,1),(1,1)\}$ and any $k$ such that $\gcd(k,M)=1$, 
		\begin{align}\label{eq:II-3}
			\mathbf{H}_{M}\Big(q^{\sigma}&\cdot f\big((-1)^{\kappa}q^{(2m+1)k},(-1)^{\kappa}q^{\mu M-(2m+1)k}\big)^{2\ell+2}\notag\\
			&\times f\big((-1)^{\lambda}q^{(4\ell+4)k},(-1)^{\lambda}q^{2\mu M-(4\ell+4)k}\big)^{2m+1}\Big)=0.
		\end{align}
	\end{enumerate}
\end{theorem}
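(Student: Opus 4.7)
All three parts of Theorem~\ref{th:Type-II} share the common template
\[\mathbf{H}_{M}\Bigl(q^{\sigma}\cdot f\bigl((-1)^{\kappa}q^{uk},(-1)^{\kappa}q^{\mu M-uk}\bigr)^{a}\,f\bigl((-1)^{\lambda}q^{vk},(-1)^{\lambda}q^{2\mu M-vk}\bigr)^{b}\Bigr)=0,\]
where $(a,b,u,v)$ is $(2\ell+1,2m+1,2m+1,2\ell+1)$ with $M=a+2b$ and $\sigma/k=2b^{2}$ in (i); $(2\ell+1,2m+2,2m+2,2\ell+1)$ with $M=a+2b$ and $\sigma/k=2b^{2}$ in (ii); and $(2\ell+2,2m+1,2m+1,4\ell+4)$ with $M=2a+b$ and $\sigma/k=3a^{2}$ in (iii). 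The plan is to treat the three parts in parallel, specializing only at the end. First, I would revert to Ramanujan's summation form via~\eqref{eq:f(a,b)-summation}: by direct computation,
\[f\bigl((-1)^{\varepsilon}q^{tk},(-1)^{\varepsilon}q^{RM-tk}\bigr)=\sum_{n\in\mathbb{Z}}(-1)^{\varepsilon n}\,q^{RM\binom{n}{2}+tkn}\qquad(R\in\{\mu,2\mu\}),\]
and expand the two theta powers as iterated sums over $(n_{1},\dots,n_{a})\in\mathbb{Z}^{a}$ and $(m_{1},\dots,m_{b})\in\mathbb{Z}^{b}$.

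Since the contribution of every $\binom{\cdot}{2}$ term to the $q$-exponent carries a factor of $\mu M$ or $2\mu M$, the exponent modulo $M$ reduces to $k(uS_{1}+vS_{2})+\sigma$, where $S_{1}=n_{1}+\cdots+n_{a}$ and $S_{2}=m_{1}+\cdots+m_{b}$. Since $\gcd(k,M)=1$, the huffing operator $\mathbf{H}_{M}$ selects exactly those tuples with $uS_{1}+vS_{2}\equiv -\sigma k^{-1}\pmod M$. The second step is to reorganize the restricted multisum, over fibres of the linear map $(S_{1},S_{2})\mapsto uS_{1}+vS_{2}\bmod M$, into a finite linear combination of genuine products of two theta series in $q^{\mu M}$ and $q^{2\mu M}$, and then invoke the explicit criteria for the vanishing of such products under $\mathbf{H}_{M}$ established earlier in the paper. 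The coprimality hypothesis $\gcd(u,v)=1$ assumed in each part, together with the relation $M=a+2b$ (or $2a+b$), ensures both $\gcd(u,M)=\gcd(v,M)=1$ and a transparent parametrization of the fibres, which is what makes the two-theta reorganization possible.

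The principal obstacle will be executing that reorganization. Concretely, one must identify a linear change of variables on $\mathbb{Z}^{a+b}$ that retains two coordinates carrying all of the mod-$M$ information, while collapsing the remaining $a+b-2$ directions back into a single product of two theta series with explicitly shifted arguments. The specific value of $\sigma$ is tuned so that the target residue $-\sigma k^{-1}\bmod M$ lands on precisely the orbit at which the sign $(-1)^{\kappa S_{1}+\lambda S_{2}}$ alternates under the sign-reversing involution provided by the huffing criteria; this explains why only the two admissible $(\kappa,\lambda)$ pairs in each part produce a vanishing image. Once the bookkeeping is aligned, the proof will reduce to checking that the parameters $(u,v,a,b,\kappa,\lambda,\sigma)$ of each case fall within the range of applicability of those earlier criteria.
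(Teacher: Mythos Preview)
Your outline is essentially the paper's Type~II strategy. The ``reorganization into a finite linear combination of products of two theta series'' that you call the principal obstacle is exactly what Theorem~\ref{th:theta-product} (the Mc Laughlin/Schr\"oter expansion) does: it expands each theta power separately as $\sum_s \sA_s\sF_s$ with $\sF_s$ a series in $q^M$, so that only the single-theta factors $\sA_s$ and $\sB_t$ carry mod-$M$ information; the paper then shows $\mathbf{H}_M(q^\sigma\sA\sB)=0$ for every pair via Corollary~\ref{coro:UMH=0-J}, which is the ``vanishing criterion'' you allude to. The mechanism you describe as a sign-reversing involution is, in the paper's language, the condition $(-1)^{(-v\kappa+u\lambda)/d}=-1$ that forces one of the two resulting theta factors to become $f(-1,-q^{\tB M})=0$.

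One small correction: after the reorganization the relevant pairs $\sA\sB$ are products of two theta series in~$q$ (with coefficients that are series in $q^{M}$), not in $q^{\mu M}$ and $q^{2\mu M}$ as you wrote. Also, the paper's modular execution---expand each power first, then treat each resulting pair $(\sA_s,\sB_t)$ individually---is cleaner than attempting a single change of variables on all of $\mathbb{Z}^{a+b}$ at once, because the free parameters $\xi,\tau$ indexing the pairs can be absorbed uniformly into the choice of $J$ in Corollary~\ref{coro:UMH=0-J} (via the Chinese remainder theorem, using the coprimality hypothesis), rather than having to track $a+b-2$ inert directions explicitly.
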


In the next corollary, we will translate the previous $\mathbf{H}$-operator relations into explicit vanishing expressions involving the five coefficient functions $\alpha$, $\beta$, $\gamma$, $\delta$ and $\epsilon$ defined in \eqref{eq:def-alpha}--\eqref{eq:def-epsilon} together with two new families:
\begin{align}
	\sum_{n}\phi_{i,j,r,\ell,m}(n)q^n &:= (-q^{i},-q^{r-i};q^{r})_\infty^\ell (q^{j},q^{r-j};q^{r})_\infty^m,\label{eq:def-phi}\\
	\sum_{n}\psi_{i,j,r,\ell,m}(n)q^n &:= (q^{i},q^{r-i};q^{r})_\infty^\ell (q^{j},q^{r-j};q^{r})_\infty^m.\label{eq:def-psi}
\end{align}

\begin{corollary}\label{coro:explicit-vanishing}
	We have the following results of the form
	\begin{align*}
		\chi_{ak,hM\mu+bk,M\mu,\ell,m}(Mn+sk)=0,
	\end{align*}
	where $\chi$ is among the seven coefficient functions, $\ell\ge 0$, $m\ge 0$, $\mu\ge 1$, $M\ge 1$, $h\ge 0$ and $s$ are integers, $a$ and $b$ are positive integers, and $k$ is such that $\gcd(k,M)=1$. Also, if the same relation holds true for several choices of $\chi$, we write for simplicity $\{\chi_1,\chi_2,\ldots\}$ with the corresponding subscript.
	\begin{align*}
		\{\phi,\psi\}_{k,2k,(2\ell+8m+5)\mu,2\ell+1,2m+1}\big((2\ell+8m+5)n+(2\ell+6m+4)k\big)&=0,\\
		\{\alpha,\beta\}_{2k,k,(8\ell+6m+3)\mu,2\ell+2m+1,2m+1}\big((8\ell+6m+3)n+(6\ell+4m+2)k\big)&=0,\\
		\{\alpha,\beta\}_{2k,k,(4\ell+6m+5)\mu,2\ell+4m+3,4m+2}\big((4\ell+6m+5)n+(2\ell+2m+2)k\big)&=0,\\
		\{\gamma,\delta\}_{k,(4\ell+2m+3)\mu+k,(4\ell+2m+3)\mu,2\ell+1,2m+1}\big((4\ell+2m+3)n+(3\ell+m+2)k\big)&=0,\\
		\{\gamma,\epsilon\}_{k,(2\ell+4m+3)\mu+2k,(2\ell+4m+3)\mu,2\ell+1,2m+1}\big((2\ell+4m+3)n+(2\ell+2m+2)k\big)&=0,\\
		\{\gamma,\delta\}_{k,k,(4\ell+2m+3)\mu,2\ell+1,2m+1}\big((4\ell+2m+3)n+(\ell+m+1)k\big)&=0,\\
		\{\gamma,\epsilon\}_{k,4k,(2\ell+16m+9)\mu,2\ell+1,2m+1}\big((2\ell+16m+9)n+(2\ell+12m+7)k\big)&=0,\\
		\{\delta,\epsilon\}_{2k,k,(16\ell+2m+9)\mu,2\ell+1,2m+1}\big((16\ell+2m+9)n+(10\ell+2m+6)k\big)&=0.
	\end{align*}
	Further, if $\gcd(2\ell+1,2m+1)=1$,
	\begin{align*}
		\{\gamma,\delta\}_{(2m+1)k,(2\ell+1)k,(2\ell+4m+3)\mu,2\ell+1,2m+1}\big((2\ell+4m+3)n-2(2m+1)^2 k\big)=0;
	\end{align*}
	if $\gcd(2\ell+1,2m+2)=1$,
	\begin{align*}
		\{\delta,\epsilon\}_{(2m+2)k,(2\ell+1)k,(2\ell+4m+5)\mu,2\ell+1,2m+2}\big((2\ell+4m+5)n-2(2m+2)^2 k\big)=0;
	\end{align*}
	if $\gcd(2\ell+2,2m+1)=1$,
	\begin{align*}
		\{\gamma,\epsilon\}_{(2m+1)k,(4\ell+4)k,(4\ell+2m+5)\mu,2\ell+2,2m+1}\big((4\ell+2m+5)n-3(2\ell+2)^2 k\big)=0.
	\end{align*}
\end{corollary}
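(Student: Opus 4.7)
The proof is a uniform, mechanical translation built on two elementary observations applied to each of Theorems \ref{th:Type-I}--\ref{th:Type-II}. First, if $G(q) = \sum_n g_n q^n$ and $\sigma = -sk$, then the identity $\mathbf{H}_M\big(q^\sigma G(q)\big) = 0$ says precisely that $g_{Mn + sk} = 0$ for every integer $n$. Second, by the product form \eqref{eq:f(a,b)-product} each theta factor appearing inside the $\mathbf{H}_M$ of the four theorems expands as
\begin{align*}
f\big((-1)^\kappa q^a,(-1)^\kappa q^{r-a}\big) = (q^r; q^r)_\infty \cdot \big((-1)^{\kappa+1} q^a, (-1)^{\kappa+1} q^{r-a}; q^r\big)_\infty
\end{align*}
with $r = \mu M$, so that the bracketed expression inside $\mathbf{H}_M$ decomposes as an integer power of $(q^{\mu M}; q^{\mu M})_\infty$ times a product of Pochhammer factors of shape $\big(\pm q^{ak}, \pm q^{\mu M - ak}; q^{\mu M}\big)_\infty^{\pm e}$. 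Since $(q^{\mu M}; q^{\mu M})_\infty$ is a unit in the formal Laurent ring and depends only on $q^{\mu M}$, a power of $q^M$, it is preserved by $\mathbf{H}_M$ and can be dropped from the vanishing statement. What remains is exactly one of the seven generating products $\alpha, \beta, \gamma, \delta, \epsilon, \phi, \psi$ at $r = \mu M$, and the coefficient-vanishing assertion in the corollary follows.

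Each of the eleven displayed lines is then extracted from a specific theorem via a linear substitution in the index variables $(\ell, m)$ that aligns $M$ and $\sigma$, followed by the choice of admissible $(\kappa, \lambda)$ that produces the prescribed signs. For instance, the first line comes from Theorem \ref{th:Type-I}(i) under the substitution $\ell' = \ell + m + 1$, $m' = m$: this turns $M = 2\ell' + 6m' + 3$ into $2\ell + 8m + 5$ and $\sigma' = -(2\ell' + 4m' + 2)k$ into $-(2\ell + 6m + 4)k$, while the net power of the outer theta factor becomes $2\ell' - (2m+1) = 2\ell + 1$; the admissible choice $(\kappa,\lambda) = (0,0)$ yields the $\phi$ case and $(\kappa,\lambda) = (1,1)$ yields the $\psi$ case. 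The second and third lines use parts (ii) and (iii) of Theorem \ref{th:Type-I} with no shift needed; the fourth and fifth lines come directly from Theorem \ref{th:Type-I.2}(i) and (ii); the sixth through eighth come directly from Theorem \ref{th:Type-I.3}(i)--(iii); and the three coprimality-conditioned lines come directly from Theorem \ref{th:Type-II}(i)--(iii). In every case the two admissible pairs $(\kappa, \lambda)$ listed by the source theorem generate the two letters appearing inside $\{\cdot,\cdot\}$.

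The only real obstacle is the bookkeeping: for each line one must verify that the parameter shift produces the desired $M$ and $\sigma$, that the exponents created by combining the outer theta power with the Pochhammer expansion match the subscripts prescribed in the corollary, and that the selected $(\kappa, \lambda)$ belongs to the admissible set of the relevant theorem. Since these are all elementary algebraic checks, the full proof consists in running the above procedure once per line and tabulating the outcomes.
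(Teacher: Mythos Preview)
Your proposal is correct and follows the same approach as the paper, which dispatches the corollary in a single sentence: the results follow from Theorems \ref{th:Type-I}--\ref{th:Type-II} once one notes that $\mathbf{H}_M\big(G(q)\cdot F(q^M)\big)=0$ implies $\mathbf{H}_M\big(G(q)\big)=0$ for any nonzero series $F$ in $q^M$. Your write-up simply makes this explicit by spelling out the Jacobi triple product expansion, the factoring-out of the $(q^{\mu M};q^{\mu M})_\infty$ unit, the index shift needed for the first line, and the matching of admissible $(\kappa,\lambda)$ pairs to the two coefficient functions in each brace---all of which the paper leaves to the reader.
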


\section{Outline}\label{sec:outline}

We outline the basic idea of our approach in this section. Let us begin with Entry 29 of Chapter XVI in Ramanujan's \textit{Notebooks} \cite[p.~199]{Ram1957}:

\begin{namedentry}[29]
	If $ab=cd$, then
	\begin{align}\label{eq:Rama-29-1}
		f(a,b)f(c,d)+f(-a,-b)f(-c,-d)=2f(ac,bd)f(ad,bc),
	\end{align}
	and
	\begin{align}\label{eq:Rama-29-2}
		f(a,b)f(c,d)-f(-a,-b)f(-c,-d)=2af(b/c,ac^2d)f(b/d,acd^2).
	\end{align}
		%
\end{namedentry}

From this entry, it is clear that products of two theta functions can be reformulated as linear combinations of other theta products. In fact, results of this nature can be traced at least back to Schr\"oter's 1854 dissertation \cite{Sch1854}, which is recorded as Lemma \ref{le:Sch} in Section \ref{sec:pairing}. Setting $a=c=q^k$ and $b=d=q^{-k+M}$, and then summing \eqref{eq:Rama-29-1} and \eqref{eq:Rama-29-2}, we have
\begin{align*}
	f\big(q^k,q^{-k+M}\big)^2=f\big(q^{2k},q^{-2k+2M}\big)f\big(q^M,q^M\big)+q^k f\big(q^{2k+M},q^{-2k+M}\big)f\big(1,q^{2M}\big).
\end{align*}
Recall that the $\mathbf{H}$-operator has a property that for any given series $F$ in $q^M$,
\begin{align*}
	\mathbf{H}_M\big(G(q)\cdot F(q^M)\big)= F(q^M)\cdot \mathbf{H}_M\big(G(q)\big).
\end{align*}
Based on this fact, we observe that on the right-hand side of the above reformulation of $f\big(q^k,q^{-k+M}\big)^2$, there is only one effective factor in each summand under the action of $\mathbf{H}_M$, namely, $f\big(q^{2k},q^{-2k+2M}\big)$ and $q^k f\big(q^{2k+M},q^{-2k+M}\big)$.

More generally, we know from a recent result of Mc Laughlin \cite{McL2019} that the above treatment can be embedded into a broader setting. In particular, for any theta power or any product of two theta powers, we may expand it as a linear combination of the form $\sum \sA \sF$, where $\sA$ is a theta series, usually times a power of $-1$ and a power of $q$, and $\sF$ is a series in $q^M$ for a certain $M$. Therefore, under the action of $\mathbf{H}_M$, only $\sA$ is effective in each summand. These results are recorded in Theorems \ref{th:theta-product} and \ref{th:theta-product-2}.

A surprising fact of the aforementioned linear expansion of theta powers is its hidden symmetry. To be precise, if we write the theta power
$$f\big((-1)^{\kappa}q^{k+A'},(-1)^{\kappa}q^{-k+(A-A')}\big)^m=\sum \sA \sF$$
as above, then for each summand $\sA \sF$ with only one exception, there exists a companion summand $\sA' \sF'$ such that $\sF=\sF'$. In other words, we may pair the summands and write the theta power as
\begin{align*}
	f\big((-1)^{\kappa}q^{k+A'},(-1)^{\kappa}q^{-k+(A-A')}\big)^m= \sA_0 \sF_0 +\sum\big(\sA_{I}+\sA_{I\!I}\big)\sF.
\end{align*}
Meanwhile, the quintuple product identity (see \cite[p.~99, Eq.~(10.1.4)]{Hir2017} or \cite[p.~119, Eq.~(1.9)]{Coo2006}) tells us that
\begin{align*}
	\frac{f\big({-q^{2k}},-q^{\mu-2k}\big)f\big({-q^{\mu}},-q^{2\mu}\big)}{f\big((-1)^{\kappa}q^k,(-1)^{\kappa}q^{\mu-k}\big)}&=f\big((-1)^{\kappa}q^{3k+\mu},(-1)^{\kappa}q^{-3k+2\mu}\big)\\
	&\quad+(-1)^{\kappa+1}q^k f\big((-1)^{\kappa}q^{3k+2\mu},(-1)^{\kappa}q^{-3k+\mu}\big).
\end{align*}
With the linear expansion formula for products of two theta powers, we find that a similar symmetry also holds true for powers of the theta quotient on the left-hand side of the above, and particularly, the exceptional unpaired summand vanishes. Namely,
\begin{align*}
	\left(\frac{f\big({-q^{2k}},-q^{\mu-2k}\big)}{f\big((-1)^{\kappa}q^k,(-1)^{\kappa}q^{\mu-k}\big)}\right)^m= \sum\big(\sA_{I}+\sA_{I\!I}\big)\sF.
\end{align*}
These results are recorded in Corollaries \ref{coro:power-pairing}, \ref{coro:power-pairing-A'-A'} and \ref{coro:f/f-expansion}.

With the above preparation in mind, it is easily seen that the theta products or quotients in Theorems \ref{th:Type-I}--\ref{th:Type-II} can be reformulated as
\begin{align*}
	\left(\sum \sA\sF\right)\cdot \left(\sum \sB\sG\right),
\end{align*}
or with our pairing process,
\begin{align*}
	\left(\sA_0 \sF_0 +\sum\big(\sA_{I}+\sA_{I\!I}\big)\sF\right)\cdot \left(\sB_0 \sG_0 +\sum\big(\sB_{I}+\sB_{I\!I}\big)\sG\right),
\end{align*}
where $\sF$ and $\sG$ (including $\sF_0$ and $\sG_0$) are series in $q^M$ for a certain $M$. We remark that $\sA_0$ or $\sB_0$ may vanish.

Our next observation comes from the known coefficient-vanishing results in the literature. Briefly speaking, if a coefficient-vanishing phenomenon appears, we either encounter a series of cancelations according to the pairing process:
\begin{align*}
	\mathbf{H}_M\big(\sA_0\sB_{0}\big) &=0,\\
	\mathbf{H}_M\big(\sA_0\sB_{I}\big) &=\pm \mathbf{H}_M\big(\sA_0\sB_{I\!I}\big),\\
	\mathbf{H}_M\big(\sB_{0}\sA_{I}\big) &=\pm \mathbf{H}_M\big(\sB_{0}\sA_{I\!I}\big),\\
	\mathbf{H}_M\big(\sA_{I}\sB_{I}\big) &=\pm \mathbf{H}_M\big(\sA_{I\!I}\sB_{I\!I}\big),\\
	\mathbf{H}_M\big(\sA_{I}\sB_{I\!I}\big) &=\pm \mathbf{H}_M\big(\sA_{I\!I}\sB_{I}\big),
\end{align*}
or we uniformly have
\begin{align*}
	\mathbf{H}_M\big(\sA\sB\big) =0.
\end{align*}
The above situations lead us to two different strategies for proving a given coefficient-vanishing problem. In particular, Theorems \ref{th:Type-I}--\ref{th:Type-I.3} fall into the former circumstance, and Theorem \ref{th:Type-II} requires the latter strategy. Detailed discussions and proofs are presented in Section \ref{sec:proof}.

What remains is a unified verification of the above relations. Since our main focus revolves around the behavior of
\begin{align*}
	\sH(q) &:= q^{w}f\big((-1)^{\kappa}q^{u+A'M},(-1)^{\kappa}q^{-u+(A-A')M}\big)\\
	&\ \quad\times f\big((-1)^{\lambda}q^{v+B'M},(-1)^{\lambda}q^{-v+(B-B')M}\big)
\end{align*}
under the action of $\mathbf{H}_M$, an effective way is to use the summation form of theta series:
\begin{align*}
	\sH(q)=\sum_{m,n\in\mathbb{Z}}(-1)^{\kappa m+\lambda n}q^{AM\binom{m}{2}+A'Mm+BM\binom{n}{2}+B'Mn+um+vn+w}.
\end{align*}
It turns out that what plays a central role under the action of $\mathbf{H}_M$ is the factor $q^{um+vn+w}$. This requires us to determine the solution set $\{(m,n):m,n\in\mathbb{Z}\}$ of the linear congruence
\begin{align*}
	um+vn+w \equiv 0 \pmod{M}.
\end{align*}
For this purpose, we begin with the homogeneous case:
\begin{align*}
	um+vn \equiv 0 \pmod{M}.
\end{align*}
Our target is to represent its solution set as a sublattice of $\mathbb{Z}^2$. Then for the inhomogeneous case, we only need to make a shift on the previously obtained sublattice. See Lemma \ref{le:w=0} and Theorem \ref{th:w} for details.

Finally, with the knowledge of the structure of the solution set of $um+vn+w \equiv 0 \pmod{M}$, we make substitutions for the summation indices $m$ and $n$, and obtain an explicit double summation formula for $\mathbf{H}_M\big(\sH(q)\big)$:
\begin{align*}
	\mathbf{H}_M\big(\sH(q)\big)=\mathbf{H}_M\left(\sum_{m,n\in\mathbb{Z}}(\star\cdots \star)\right) = \sum_{s,t\in\mathbb{Z}}(\star\cdots \star),
\end{align*}
in two new indices $s$ and $t$ over $\mathbb{Z}$. This reformulation allows us to determine for which $\sH(q)$, the relation $\mathbf{H}_M\big(\sH(q)\big)=0$ holds true. Also of interest are two companions of $\sH(q)$:
\begin{align*}
	\hat{\sH}(q)& :=q^{\hw}f\big((-1)^{\kappa}q^{u+A'M},(-1)^{\kappa}q^{-u+(A-A')M}\big)\notag\\
	&\ \quad\times f\big((-1)^{\lambda}q^{v+(B-B')M},(-1)^{\lambda}q^{-v+B'M}\big),\\
	\check{\sH}(q)&: =q^{\chw}f\big((-1)^{\kappa}q^{u+(A-A')M},(-1)^{\kappa}q^{-u+A'M}\big)\notag\\
	&\ \quad\times f\big((-1)^{\lambda}q^{v+(B-B')M},(-1)^{\lambda}q^{-v+B'M}\big).
\end{align*}
We provide some effective criteria to check whether $\mathbf{H}_M\big(\sH(q)\big)$ equals $\mathbf{H}_M\big(\hat{\sH}(q)\big)$ or $\mathbf{H}_M\big(\check{\sH}(q)\big)$. The related results are presented in Corollary \ref{coro:UMH=0-J} and Theorems \ref{th:cancel-1} and \ref{th:cancel-2}.

\section{Expanding and pairing powers of theta series}\label{sec:pairing}

\subsection{Schr\"oter and Mc Laughlin}

As we have pointed out in the previous section, reformulations of products of Ramanujan's theta series, such as \eqref{eq:Rama-29-1} and \eqref{eq:Rama-29-2}, have been widely studied. Among these results, a particularly interesting identity comes from Schr\"oter's 1854 dissertation \cite{Sch1854}; see also \cite[p.~111]{BB1987}.

\begin{lemma}[Schr\"oter]\label{le:Sch}
	We have
	\begin{align}
		f\big(q^{A}x,q^{A}/x\big)f\big(q^{B}y,q^{B}/y\big)&= \sum_{n=0}^{A+B-1}q^{An^2}x^n f\big(q^{A+B+2An}x/y,q^{A+B-2An}y/x\big)\notag\\
		&\quad\times f\big(q^{AB(A+B+2n)}(x^B y^A),q^{AB(A+B-2n)}/(x^B y^A)\big).
	\end{align}
	In particular,
	\begin{align}\label{eq:Sch}
		f\big(qz,q/z\big)^2&=\sum_{n=0}^1 q^{n^2}z^n f\big(q^{2+2n},q^{2-2n}\big) f\big(q^{2+2n}z^2,q^{2-2n}/z^2\big).
	\end{align}
\end{lemma}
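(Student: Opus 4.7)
The plan is to prove Schr\"oter's identity by passing to the summation form of the theta series. Since the arguments of $f(q^A x, q^A/x)$ multiply to $q^{2A}$, a direct application of \eqref{eq:f(a,b)-summation} gives
\begin{align*}
	f(q^A x, q^A/x) = \sum_{m \in \mathbb{Z}} q^{Am^2}\, x^m,
\end{align*}
and similarly for $f(q^B y, q^B/y)$. Consequently, the left-hand side expands as the double sum $\sum_{m, n \in \mathbb{Z}} q^{Am^2 + Bn^2}\, x^m y^n$, which is what I would aim to match against the right-hand side, term by term.

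The heart of the argument is a linear change of summation variables tailored to the right-hand side: I would write each $(m, n) \in \mathbb{Z}^2$ as
\begin{align*}
	m = n_0 + t + Bs, \qquad n = -t + As,
\end{align*}
where $n_0 \in \{0, 1, \ldots, A+B-1\}$ is determined by $m+n \equiv n_0 \pmod{A+B}$ and $s, t \in \mathbb{Z}$. Since $m + n = n_0 + (A+B)s$, this sets up a bijection $\mathbb{Z}^2 \longleftrightarrow \{0, \ldots, A+B-1\} \times \mathbb{Z}^2$. A short expansion shows that the cross terms $\pm 2ABst$ coming from $Am^2$ and $Bn^2$ cancel, leaving
\begin{align*}
	Am^2 + Bn^2 = An_0^2 + (A+B)t^2 + 2An_0 t + AB(A+B)s^2 + 2ABn_0 s,
\end{align*}
while $x^m y^n = x^{n_0}(x/y)^t (x^B y^A)^s$. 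On the other side, re-expanding each summand of the right-hand side of the lemma by the same rule gives
\begin{align*}
	f\bigl(q^{A+B+2An_0} x/y,\, q^{A+B-2An_0} y/x\bigr) &= \sum_{t \in \mathbb{Z}} q^{(A+B)t^2 + 2An_0 t}(x/y)^t,\\
	f\bigl(q^{AB(A+B+2n_0)}(x^B y^A),\, q^{AB(A+B-2n_0)}/(x^B y^A)\bigr) &= \sum_{s \in \mathbb{Z}} q^{AB(A+B)s^2 + 2ABn_0 s}(x^B y^A)^s.
\end{align*}
Multiplying these two sums with the outer factor $q^{An_0^2} x^{n_0}$ and summing over $n_0 \in \{0, \ldots, A+B-1\}$ now reproduces exactly the reindexed left-hand side, proving the general identity.

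The main obstacle is the bookkeeping: one has to track the $s^2$, $t^2$, $st$, $n_0 s$, and $n_0 t$ coefficients simultaneously and confirm that $2ABst$ from $Am^2$ cancels $-2ABst$ from $Bn^2$, with the surviving pieces reassembling into the stated quadratic form. Once this is in place, the particular case \eqref{eq:Sch} follows by specializing $A = B = 1$ and $x = y = z$: the outer index collapses to $n \in \{0, 1\}$, and the arguments simplify via $q^{A+B \pm 2An} x/y = q^{2 \pm 2n}$ together with $q^{AB(A+B \pm 2n)}(x^B y^A) = q^{2 \pm 2n} z^2$ (and its reciprocal), yielding the displayed two-term expression.
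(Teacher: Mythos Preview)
Your proof is correct. The paper does not supply its own proof of this lemma; it is stated as a classical result attributed to Schr\"oter and cited from \cite{Sch1854} and \cite[p.~111]{BB1987}. Your argument via the summation form of $f(a,b)$ together with the lattice change of variables $(m,n)\mapsto(n_0,s,t)$ is the standard direct verification, and all the bookkeeping (in particular the cancellation of the $\pm 2ABst$ cross terms and the recovery of the two inner theta sums) checks out.
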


In a recent paper of Mc Laughlin \cite{McL2019}, Schr\"oter's identity was generalized to products of an arbitrary number of theta series. In particular, the following formula is stated in \cite[Eq.~(4.4)]{McL2019}.

\begin{lemma}[Mc Laughlin]
	For any $m\ge 3$,
	\begin{align}\label{eq:McL}
		f\big(qz,q/z\big)^m&=\sum_{n_1=0}^1\sum_{n_2=0}^2\cdots \sum_{n_{m-1}=0}^{m-1} z^{n_{m-1}}
		q^{n_1^2+(n_2-n_1)^2+\cdots+(n_{m-1}-n_{m-2})^2}\notag\\
		&\quad\times f\big(q^{2+2n_1},q^{2-2n_1}\big)f\big(q^{m+2n_{m-1}}z^m,q^{m-2n_{m-1}}/z^m\big)\notag\\
		&\quad\times\prod_{i=2}^{m-1}f\big(q^{i(i+1)+2(i+1)n_{i-1}-2in_i},q^{i(i+1)-2(i+1)n_{i-1}+2in_i}\big).
	\end{align}
\end{lemma}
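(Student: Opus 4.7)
The natural plan is to prove \eqref{eq:McL} by induction on $m$, taking as base case $m=2$ the specialization $A=B=1$, $x=y=z$ of Lemma \ref{le:Sch}, which is exactly the displayed identity \eqref{eq:Sch}. For the inductive step from $m$ to $m+1$, I would multiply both sides of \eqref{eq:McL} by a fresh factor $f(qz,q/z)$ and then apply Lemma \ref{le:Sch} to combine this factor with the ``$z$-bearing'' last factor $f(q^{m+2n_{m-1}}z^m,q^{m-2n_{m-1}}/z^m)$ of the expansion. Writing the latter as $f(q^Ax,q^A/x)$ with $A=m$ and $x=q^{2n_{m-1}}z^m$, and the former as $f(q^By,q^B/y)$ with $B=1$, $y=z$, Schröter's identity introduces a new summation variable $n_m$ ranging over $\{0,1,\ldots,A+B-1\}=\{0,1,\ldots,m\}$---precisely the range prescribed by \eqref{eq:McL} for $f(qz,q/z)^{m+1}$---together with two new theta factors and a Gaussian prefactor $q^{mn_m^2+2n_{m-1}n_m}z^{mn_m}$.

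My expectation is that, after a suitable reindexing of $n_m$ and an application of the theta quasi-periodicity relation $f(a,b)=a^{k(k+1)/2}b^{k(k-1)/2}f(a(ab)^k,b(ab)^{-k})$, one of the two outputs becomes the new last factor $f(q^{m+1+2n_m}z^{m+1},q^{m+1-2n_m}/z^{m+1})$ and the other becomes the new $i=m$ middle factor $f(q^{m(m+1)+2(m+1)n_{m-1}-2mn_m},q^{m(m+1)-2(m+1)n_{m-1}+2mn_m})$. Likewise, the Gaussian prefactor combines with the old $z^{n_{m-1}}q^{n_1^2+\cdots+(n_{m-1}-n_{m-2})^2}$ to produce $z^{n_m}q^{n_1^2+\cdots+(n_m-n_{m-1})^2}$ as required.

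The main obstacle is the bookkeeping of exponents: the direct Schröter output furnishes two theta factors with $z$-powers $\pm(m-1)$ and $\pm 2m$, whereas \eqref{eq:McL} demands $\pm(m+1)$ and $0$. Reconciling this discrepancy requires absorbing the excess $z$-powers by quasi-periodicity (with $ab=q^{2m(m+1)}$ for the would-be middle factor) together with a substitution such as $n_m\mapsto m-n_m$, and verifying that the shift in $n_m$ is compatible with the Gaussian prefactor.

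An alternative route that sidesteps this bookkeeping entirely would be to expand both sides via the Jacobi triple product: the LHS becomes $\sum_{(k_1,\ldots,k_m)\in\mathbb{Z}^m}q^{k_1^2+\cdots+k_m^2}z^{k_1+\cdots+k_m}$, and the RHS, after applying $f(q^Ax,q^A/x)=\sum_{j}q^{Aj^2}x^j$ to each theta factor, becomes a sum over $(n_1,\ldots,n_{m-1},j_1,\ldots,j_m)\in\prod_{i=1}^{m-1}\{0,\ldots,i\}\times\mathbb{Z}^m$ with a specific quadratic exponent. It then suffices to exhibit an explicit integer-linear bijection between these two indexing sets. Such a bijection is given by $k_i=\sum_{j=1}^m A_{ij}j_j + s_i(n_1,\ldots,n_{m-1})$, where $A$ has orthogonal columns $c_j=e_1+\cdots+e_j-je_{j+1}$ for $1\le j\le m-1$ and $c_m=e_1+\cdots+e_m$ (of squared norms $j(j+1)$ and $m$ respectively, giving $|\det A|=m!$), and the $n$-shifts $s_i$ are chosen---by solving a small linear system---to match the cross-terms $n_i j_j$ and $n_i n_j$ in the target quadratic form; for instance, when $m=3$ one takes $(s_1,s_2,s_3)=(n_1,0,-n_1+n_2)$, and the $m!$ coset representatives provided by the ranges of the $n_i$'s exhaust $\mathbb{Z}^m/A\mathbb{Z}^m$, yielding the required bijection.
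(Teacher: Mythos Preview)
The paper does not prove this lemma at all; it is quoted verbatim from Mc Laughlin \cite[Eq.~(4.4)]{McL2019} as a known result, so there is no in-paper argument to compare against.

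Your inductive route (approach 1) has a real gap that your proposed fix cannot close. With $A=m$, $x=q^{2n_{m-1}}z^m$, $B=1$, $y=z$ (or with $A,B$ swapped), Schr\"oter's two output thetas carry $z^{\pm(m-1)}$ and $z^{\pm 2m}$ in their arguments, whereas the target for $f(qz,q/z)^{m+1}$ needs $z^{\pm(m+1)}$ and $z^{0}$. The quasi-periodicity shift $f(a,b)=a^{k(k+1)/2}b^{k(k-1)/2}f(a(ab)^k,b(ab)^{-k})$ cannot alter these $z$-exponents, since $ab$ is a pure power of $q$ in every factor here; nor can any reindexing of $n_m$. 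More structurally, for positive $A,B$ there is \emph{no} choice of $x,y$ with $z$-weights $m$ and $1$ making either $x/y$ or $x^By^A$ free of $z$, so a $z$-free middle factor simply cannot emerge from this single Schr\"oter step. (Mc Laughlin's own proof in \cite{McL2019} carries auxiliary parameters $a_1,\ldots,a_m$ throughout the induction---compare \eqref{eq:McL-2}, where the middle factors retain powers of $y$---and only specializes $a_i=1$ at the end; that is how the $z$-obstruction is avoided.)

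Your second route is correct and yields a genuine self-contained proof. The columns $c_j$ are orthogonal with $|c_j|^2=j(j+1)$ for $j<m$ and $|c_m|^2=m$, so $|\det A|=m!$ matches the number of tuples $(n_1,\ldots,n_{m-1})$. The shift solves explicitly as
\[
s(n)=(n_1,\,0,\,n_2-n_1,\,n_3-n_2,\,\ldots,\,n_{m-1}-n_{m-2}),
\]
which is integral, satisfies $c_i\cdot s=(i{+}1)n_{i-1}-in_i$ (with $n_0=0$, $n_m:=n_{m-1}$ in the boundary cases) and $|s|^2=\sum_{i\ge 1}(n_i-n_{i-1})^2$. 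Finally, if $s(n)-s(n')\in A\mathbb{Z}^m$ then $c_i\cdot(s(n)-s(n'))\in |c_i|^2\mathbb{Z}$; reading this for $i=1,2,\ldots,m-1$ forces $n_i=n_i'$ successively, so the $m!$ shifts are pairwise inequivalent and hence exhaust $\mathbb{Z}^m/A\mathbb{Z}^m$. This completes the bijection and the identity.
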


In fact, the above identity is a particular case of \cite[Corollary 4.2]{McL2019}, which may also be specialized as follows.

\begin{lemma}
	For any $m_1,m_2\ge 1$ with $m=m_1+m_2$,
	\begin{align}\label{eq:McL-2}
		&f\big(qxy,q/(xy)\big)^{m_1}f\big(qx/y,qy/x\big)^{m_2}\notag\\
		&=\sum_{n_1=0}^1\sum_{n_2=0}^2\cdots \sum_{n_{m-1}=0}^{m-1} x^{n_{m-1}}y^{2n_{m_1}-n_{m-1}}
		q^{n_1^2+(n_2-n_1)^2+\cdots+(n_{m-1}-n_{m-2})^2}\notag\\
		&\quad\times f\big(q^{2+2n_1}y^2,q^{2-2n_1}y^{-2}\big) f\big(q^{m+2n_{m-1}}x^my^{m_1-m_2},q^{m-2n_{m-1}}x^{-m}y^{m_2-m_1}\big)\notag\\
		&\quad\times\prod_{i=2}^{m_1}f\big(q^{i(i+1)+2(i+1)n_{i-1}-2in_i}y^{-2},q^{i(i+1)-2(i+1)n_{i-1}+2in_i}y^2\big)\notag\\
		&\quad\times\prod_{i=m_1+1}^{m-1}f\big(q^{i(i+1)+2(i+1)n_{i-1}-2in_i}y^{2m_1},q^{i(i+1)-2(i+1)n_{i-1}+2in_i}y^{-2m_1}\big).
	\end{align}
\end{lemma}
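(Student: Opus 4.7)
The lemma is identified in the excerpt as a specialization of Mc Laughlin's Corollary 4.2 in \cite{McL2019}. The plan is therefore to derive (\ref{eq:McL-2}) directly from that corollary by setting $a_1=\cdots=a_{m_1}=xy$ and $a_{m_1+1}=\cdots=a_{m}=x/y$ in the generic product formula $\prod_{k=1}^{m}f(qa_k,q/a_k)=\sum_{\vec n}(\cdots)$. Under this substitution the left side collapses to $f(qxy,q/(xy))^{m_1}f(qx/y,qy/x)^{m_2}$, and the right side becomes a multi-sum indexed by $n_1\in\{0,1\}$, $n_2\in\{0,1,2\}$, $\ldots$, $n_{m-1}\in\{0,1,\ldots,m-1\}$ whose summand must be shown to equal the summand in (\ref{eq:McL-2}).

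The key steps, in order, are as follows. First I would verify the base case $m_1=m_2=1$ by a direct application of Schroeter's identity (Lemma \ref{le:Sch}) with $A=B=1$ and the substitution $x_{\text{Sch}}=xy$, $y_{\text{Sch}}=x/y$. A short calculation using $x_{\text{Sch}}/y_{\text{Sch}}=y^2$ and $x_{\text{Sch}}y_{\text{Sch}}=x^2$ yields
\[
\sum_{n_1=0}^{1}q^{n_1^2}(xy)^{n_1}\,f\bigl(q^{2+2n_1}y^2,q^{2-2n_1}y^{-2}\bigr)\,f\bigl(q^{2+2n_1}x^2,q^{2-2n_1}x^{-2}\bigr),
\]
which matches (\ref{eq:McL-2}) at $m=2$. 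Second, for the general case, I would perform the substitution in each of the three structural pieces of the Corollary 4.2 expansion: the initial factor (involving $n_1$ alone), the $m-2$ middle factors (each coupling $n_{i-1}$ and $n_i$), and the final factor (involving $n_{m-1}$ alone). Third, I would check that each piece reduces to the corresponding piece of (\ref{eq:McL-2}); in particular that the middle factors split into a first block $2\le i\le m_1$ carrying $y^{\mp 2}$ and a second block $m_1+1\le i\le m-1$ carrying $y^{\pm 2m_1}$, reflecting the partial products $a_1\cdots a_i$, which equal $(xy)^i$ for $i\le m_1$ and $x^i y^{2m_1-i}$ for $i>m_1$.

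The main obstacle is the bookkeeping of $y$-exponents through the substitution. The qualitative difference between $a_1\cdots a_i=(xy)^i$ for $i\le m_1$ and $a_1\cdots a_i=x^i y^{2m_1-i}$ for $i>m_1$ is exactly what produces the jump from $y^{\pm 2}$ to $y^{\pm 2m_1}$ in the middle factors at the transition $i=m_1+1$. The same tallying accounts for the $y^{m_1-m_2}$ appearing in the final factor, which comes from $a_1\cdots a_m=x^m y^{m_1-m_2}$, as well as for the leading monomial $x^{n_{m-1}}y^{2n_{m_1}-n_{m-1}}$, which arises from the combinatorial $a$-weights in the Corollary 4.2 summand. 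Carrying this out explicitly is mechanical but requires care; the conceptual content lies entirely in the already established Corollary 4.2, and an alternative route that reproves it from scratch by induction on $m$, iterating Schroeter with $(A,B)=(i,1)$ at step $i$, would encounter the same transition step at $i=m_1+1$ as its only delicate point.
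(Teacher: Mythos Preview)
Your approach is essentially the same as the paper's: both derive the lemma as a direct specialization of Mc Laughlin's Corollary 4.2 in \cite{McL2019}. The only cosmetic difference is the parameterization of the substitution---the paper sets $z=x$, $a_1=\cdots=a_{m_1}=y$, $a_{m_1+1}=\cdots=a_m=y^{-1}$ (so that $za_i$ equals $xy$ or $x/y$), whereas you absorb $z$ into the $a_i$ and set them directly to $xy$ and $x/y$; the resulting arguments of the theta functions are identical, and your more detailed bookkeeping of the $y$-exponents simply spells out what the paper leaves implicit.
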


\begin{proof}
	This lemma follows upon setting $z=x$, $a_1=\cdots=a_{m_1}=y$ and $a_{m_1+1}=\cdots =a_{m_1+m_2}=y^{-1}$ in \cite[Corollary 4.2]{McL2019}.
\end{proof}

\subsection{One theta power}

We will make use of Schr\"oter's formula and Mc Laughlin's generalization to expand an arbitrary theta power as a summation of theta products. Before stating our result, an auxiliary lemma is required.

\begin{lemma}\label{le:sigma}
	For each positive integer $i$, define
	\begin{align*}
		\tau_i(n):=\begin{cases}
			0 & \text{if $n=0$},\\
			i+1-n & \text{if $n\ne 0$},
		\end{cases}
	\end{align*}
	for $0\le n\le i$. Let $m$ be a fixed positive integer. Then
	\begin{align}\label{eq:tau-def}
		\tau(n_1,n_2,\ldots,n_m):=\big(\tau_1(n_1),\tau_2(n_2),\ldots,\tau_m(n_m)\big)
	\end{align}
	is a bijection on the Cartesian product $\cI_m:=\{0,1\}\times \{0,1,2\}\times\cdots\times\{0,1,\ldots,m\}$. Further, for $(n_1,n_2,\ldots,n_m)\in \cI_m$, define
	\begin{align}
		\sigma(n_1,n_2,\ldots,n_m;q)&:=q^{\frac{n_1^2+(n_{2}-n_1)^2+\cdots +(n_m-n_{m-1})^2-n_m}{2}} f(q^{1+n_1},q^{1-n_1})\notag\\
		&\ \quad\times \prod_{i=2}^m f\big(q^{\frac{i(i+1)}{2}+(i+1)n_{i-1}-i n_i},q^{\frac{i(i+1)}{2}-(i+1)n_{i-1}+i n_i}\big).
	\end{align}
	Then
	\begin{align}\label{eq:sigma-relation}
		\sigma(n_1,n_2,\ldots,n_m;q) = \sigma\big(\tau_1(n_1),\tau_2(n_2),\ldots,\tau_m(n_m);q\big).
	\end{align}
\end{lemma}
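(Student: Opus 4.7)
\emph{Bijection.} The first assertion is essentially immediate. Each $\tau_i$ fixes $0$ and, restricted to $\{1,\ldots,i\}$, is the order-reversing involution $n \mapsto i+1-n$, so $\tau_i$ permutes $\{0, 1, \ldots, i\}$, and consequently $\tau = \tau_1 \times \cdots \times \tau_m$ permutes $\cI_m$. A useful observation for what follows is that $\tau_1$ is the identity on $\{0, 1\}$, so the factor $F_1 := f(q^{1+n_1}, q^{1-n_1})$ in $\sigma$ is automatically $\tau$-invariant.

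\emph{Symmetry.} For the identity $\sigma(\mathbf{n}; q) = \sigma(\tau(\mathbf{n}); q)$, I plan to analyze the remaining theta factors $F_i = f(q^{A_i}, q^{B_i})$ (for $i \geq 2$) individually, where $A_i = \tfrac{i(i+1)}{2} + (i+1)n_{i-1} - in_i$ and $B_i = \tfrac{i(i+1)}{2} - (i+1)n_{i-1} + in_i$. The sum $A_i + B_i = i(i+1)$ is $\tau$-invariant, so the transformed arguments multiply to the same quantity as the originals. The essential tools are the symmetry $f(a, b) = f(b, a)$ and the quasi-periodicity
\[
f(a, b) = a^{k(k+1)/2} b^{k(k-1)/2} f\bigl(a(ab)^k,\, b(ab)^{-k}\bigr), \qquad k \in \mathbb{Z},
\]
obtained by reindexing the defining series $f(a, b) = \sum_{j \in \mathbb{Z}} a^{j(j+1)/2}b^{j(j-1)/2}$ via $j \mapsto j + k$. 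A case analysis keyed on the support pattern of $(n_{i-1}, n_i)$ will handle every factor: when both indices are non-trivially acted on by $\tau$ (so $n_{i-1}, n_i \neq 0$ and $i \geq 3$), direct substitution yields $(A_i', B_i') = (B_i, A_i)$ and $F_i$ is fixed by the symmetry; when exactly one is non-trivially acted on (which includes the boundary case $i = 2$, where $\tau_1$ is trivial), the difference $(A_i', B_i') - (A_i, B_i)$ equals $\pm(i(i+1), -i(i+1))$, and a single application of the quasi-periodicity with $k = \pm 1$ yields $F_i(\tau\mathbf{n}) = q^{\lambda_i} F_i(\mathbf{n})$ for an explicit integer exponent $\lambda_i$ linear in the $n_j$'s; when neither index is acted on, $F_i$ is trivially invariant and $\lambda_i = 0$.

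\emph{Arithmetic identity.} The proof then reduces to verifying
\[
\sum_{i=2}^m \lambda_i \;=\; E(\mathbf{n}) - E(\tau(\mathbf{n})),
\]
ensuring that the accumulated prefactors absorb the change in the outer exponent $q^{E(\mathbf{n})}$. Both sides are piecewise polynomial in $(n_1, \ldots, n_m)$, with pieces indexed by the vanishing pattern of the $n_j$'s. Each piece reduces to expanding $(n_i - n_{i-1})^2 - (\tau_i(n_i) - \tau_{i-1}(n_{i-1}))^2$ and matching with the corresponding $\lambda_i$, so I expect the main obstacle to lie in this final bookkeeping: systematically handling all combinations of active/inactive indices at adjacent positions and confirming that the local contributions cancel correctly across the entire sum.
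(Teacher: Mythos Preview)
Your approach is correct and runs the same four-case analysis as the paper, but the paper organizes it more economically. Instead of separating the theta factors $F_i$ from the exponent $E(\mathbf n)$ and reconciling them at the end via your global identity $\sum_i\lambda_i=E(\mathbf n)-E(\tau(\mathbf n))$, the paper observes that $E(\mathbf n)=\sum_{i\ge 1}\binom{n_i-n_{i-1}}{2}$ (with $n_0=0$), bundles the $i$-th piece with $F_i$ into
\[
g(n_{i-1},n_i)\;=\;q^{\binom{n_i-n_{i-1}}{2}}\,f\!\big(q^{\frac{i(i+1)}{2}+(i+1)n_{i-1}-in_i},\,q^{\frac{i(i+1)}{2}-(i+1)n_{i-1}+in_i}\big),
\]
and proves the purely local statement $g(\tau_{i-1}(n_{i-1}),\tau_i(n_i))=g(n_{i-1},n_i)$ by the same four cases. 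Induction on $m$ then gives \eqref{eq:sigma-relation} immediately, with no separate bookkeeping step. Your final arithmetic identity is exactly this local invariance summed over $i$, so the two arguments are equivalent; the paper's packaging simply absorbs the $\lambda_i$'s before they appear.

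One small correction to your intermediate claim: in the ``exactly one nonzero'' case the transformed pair satisfies $(A_i',B_i')=(B_i,A_i)\mp\big(i(i+1),-i(i+1)\big)$, i.e.\ it differs from the \emph{swapped} pair by $\pm(i(i+1),-i(i+1))$, not from $(A_i,B_i)$ itself. So you need one swap $f(a,b)=f(b,a)$ together with one quasi-periodicity shift, rather than a shift alone; your stated conclusion $F_i(\tau\mathbf n)=q^{\lambda_i}F_i(\mathbf n)$ is unaffected.
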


\begin{proof}
	The first part of this lemma is obvious. For the second part, we apply induction on $m$. Notice that the $m=1$ case is trivial. This is because $\tau_1(n)=n$ for $n\in\{0,1\}$. Let us assume that \eqref{eq:sigma-relation} holds true for $m-1$ with $m\ge 2$. That is,
	\begin{align}\label{eq:sigma-induction-hyp}
		\sigma(n_1,n_2,\ldots,n_{m-1};q) = \sigma\big(\tau_1(n_1),\tau_2(n_2),\ldots,\tau_{m-1}(n_{m-1});q\big).
	\end{align} 
	
	We start by noting that
	\begin{align}\label{eq:sigma-g}
		&\sigma(n_1,n_2,\ldots,n_{m};q)\notag\\
		&\quad=\sigma(n_1,n_2,\ldots,n_{m-1};q)q^{\frac{(n_{m}-n_{m-1})^2-(n_{m}-n_{m-1})}{2}}\notag\\
		&\quad\quad \times f\big(q^{\frac{m(m+1)}{2}+(m+1)n_{m-1}-m n_m},q^{\frac{m(m+1)}{2}-(m+1)n_{m-1}+m n_m}\big)\notag\\
		&\quad =: \sigma(n_1,n_2,\ldots,n_{m-1};q) \cdot g(n_{m-1},n_m).
	\end{align}
	Now we claim that
	\begin{align}\label{eq:g-relation}
		g\big(\tau_{m-1}(n_{m-1}),\tau_m(n_{m})\big) = g(n_{m-1},n_m).
	\end{align}
	To prove this relation, we consider the following four cases:
	\begin{itemize}[leftmargin=*,align=left,itemsep=5pt]
		\renewcommand{\labelitemi}{\scriptsize$\blacktriangleright$}
		
		\item If $n_{m-1}=0$ and $n_{m}=0$, then
		\begin{align*}
			g\big(\tau_{m-1}(n_{m-1}),\tau_m(n_{m})\big) = g(0,0) = g(n_{m-1},n_m).
		\end{align*}
		
		\item If $n_{m-1}\ne 0$ and $n_{m}=0$, then
		\begin{align*}
			&g\big(\tau_{m-1}(n_{m-1}),\tau_m(n_{m})\big) = g\big(m-n_{m-1},0\big)\\
			& = q^{\frac{(-m+n_{m-1})^2-(-m+n_{m-1})}{2}}\\
			&\quad \times f\big(q^{\frac{m(m+1)}{2}+(m+1)(m-n_{m-1})},q^{\frac{m(m+1)}{2}-(m+1)(m-n_{m-1})}\big)\\
			& = q^{\frac{(-m+n_{m-1})^2-(-m+n_{m-1})}{2}}\\
			&\quad \times \sum_{s\in\mathbb{Z}}\big(q^{\frac{m(m+1)}{2}+(m+1)(m-n_{m-1})}\big)^{\frac{s(s+1)}{2}}\big(q^{\frac{m(m+1)}{2}-(m+1)(m-n_{m-1})}\big)^{\frac{s(s-1)}{2}}\\
			\text{\tiny($s\mapsto -1-s$)}& = q^{\frac{(-n_{m-1})^2-(-n_{m-1})}{2}}\\
			&\quad \times \sum_{s\in\mathbb{Z}}\big(q^{\frac{m(m+1)}{2}+(m+1)n_{m-1}}\big)^{\frac{s(s+1)}{2}}\big(q^{\frac{m(m+1)}{2}-(m+1)n_{m-1}}\big)^{\frac{s(s-1)}{2}}\\
			& = q^{\frac{(-n_{m-1})^2-(-n_{m-1})}{2}} f\big(q^{\frac{m(m+1)}{2}+(m+1)n_{m-1}},q^{\frac{m(m+1)}{2}-(m+1)n_{m-1}}\big)\\
			& = g(n_{m-1},0) = g(n_{m-1},n_m).
		\end{align*}
		
		\item If $n_{m-1}= 0$ and $n_{m}\ne 0$, then
		\begin{align*}
			&g\big(\tau_{m-1}(n_{m-1}),\tau_m(n_{m})\big) = g\big(0,m+1-n_m\big)\\
			& = q^{\frac{(m+1-n_m)^2-(m+1-n_m)}{2}}\\
			&\quad \times f\big(q^{\frac{m(m+1)}{2}-m(m+1-n_m)},q^{\frac{m(m+1)}{2}+m(m+1-n_m)}\big)\\
			& = q^{\frac{(m+1-n_m)^2-(m+1-n_m)}{2}}\\
			&\quad \times \sum_{s\in\mathbb{Z}}\big(q^{\frac{m(m+1)}{2}-m(m+1-n_m)}\big)^{\frac{s(s+1)}{2}}\big(q^{\frac{m(m+1)}{2}+m(m+1-n_m)}\big)^{\frac{s(s-1)}{2}}\\
			\text{\tiny($s\mapsto 1-s$)}& = q^{\frac{n_m^2-n_m}{2}} \sum_{s\in\mathbb{Z}}\big(q^{\frac{m(m+1)}{2}-mn_{m}}\big)^{\frac{s(s+1)}{2}}\big(q^{\frac{m(m+1)}{2}+mn_{m}}\big)^{\frac{s(s-1)}{2}}\\
			& = q^{\frac{n_m^2-n_m}{2}} f\big(q^{\frac{m(m+1)}{2}-mn_{m}},q^{\frac{m(m+1)}{2}+mn_{m}}\big)\\
			& = g(0,n_m) = g(n_{m-1},n_m).
		\end{align*}
		
		\item If $n_{m-1}\ne 0$ and $n_{m}\ne 0$, then
		\begin{align*}
			&g\big(\tau_{m-1}(n_{m-1}),\tau_m(n_{m})\big) = g\big(m-n_{m-1},m+1-n_m\big)\\
			& = q^{\frac{(1+n_{m-1}-n_m)^2-(1+n_{m-1}-n_m)}{2}}\\
			&\quad \times f\big(q^{\frac{m(m+1)}{2}+(m+1)(m-n_{m-1})-m(m+1-n_m)},q^{\frac{m(m+1)}{2}-(m+1)(m-n_{m-1})+m(m+1-n_m)}\big)\\
			& = q^{\frac{(1+n_{m-1}-n_m)^2-(1+n_{m-1}-n_m)}{2}}\\
			&\quad \times \sum_{s\in\mathbb{Z}}\big(q^{\frac{m(m+1)}{2}+(m+1)(m-n_{m-1})-m(m+1-n_m)})^{\frac{s(s+1)}{2}}\\
			&\quad\times\big(q^{\frac{m(m+1)}{2}-(m+1)(m-n_{m-1})+m(m+1-n_m)}\big)^{\frac{s(s-1)}{2}}\\
			& = q^{\frac{(n_{m}-n_{m-1})^2-(n_{m}-n_{m-1})}{2}}\\
			&\quad \times \sum_{s\in\mathbb{Z}}\big(q^{\frac{m(m+1)}{2}+(m+1)n_{m-1}-m n_m}\big)^{\frac{s(s+1)}{2}}\big(q^{\frac{m(m+1)}{2}-(m+1)n_{m-1}+m n_m}\big)^{\frac{s(s-1)}{2}}\\
			& = q^{\frac{(n_{m}-n_{m-1})^2-(n_{m}-n_{m-1})}{2}}\\
			&\quad \times f\big(q^{\frac{m(m+1)}{2}+(m+1)n_{m-1}-m n_m},q^{\frac{m(m+1)}{2}-(m+1)n_{m-1}+m n_m}\big)\\
			& = g(n_{m-1},n_m).
		\end{align*}
	\end{itemize}
	
	Finally, we conclude that
	\begin{align*}
		&\sigma\big(\tau_1(n_1),\tau_2(n_2),\ldots,\tau_m(n_m);q\big)\\
		\text{\tiny (by \eqref{eq:sigma-g})}&=\sigma\big(\tau_1(n_1),\tau_2(n_2),\ldots,\tau_{m-1}(n_{m-1});q\big)\cdot g\big(\tau_{m-1}(n_{m-1}),\tau_m(n_{m})\big)\\
		\text{\tiny (by \eqref{eq:g-relation})}&=\sigma\big(\tau_1(n_1),\tau_2(n_2),\ldots,\tau_{m-1}(n_{m-1});q\big)\cdot g(n_{m-1},n_m)\\
		\text{\tiny (by \eqref{eq:sigma-induction-hyp})}&= \sigma(n_1,n_2,\ldots,n_{m-1};q) \cdot g(n_{m-1},n_m)\\
		\text{\tiny (by \eqref{eq:sigma-g})}&= \sigma(n_1,n_2,\ldots,n_{m};q).
	\end{align*}
	This gives our desired result.
\end{proof}

\begin{theorem}\label{th:theta-product}
	Let $A$, $A'$ and $k$ be integers and let $\kappa\in\{0,1\}$. For any $m\ge 1$, there exist series $\sM_{s}^{(m)}(q^A)$ $(0\le s\le m-1)$ in $q^A$, depending only on $s$ and $m$, such that
	\begin{align}
		&f\big((-1)^{\kappa}q^{k+A'},(-1)^{\kappa}q^{-k+(A-A')}\big)^m\notag\\ &=\sum_{s=0}^{m-1}(-1)^{\kappa s}q^{ks}f\big((-1)^{\kappa m}q^{km+As+A'm},(-1)^{\kappa m}q^{-km-As+(A-A')m}\big)\notag\\
		&\quad\times q^{A's}\sM_{s}^{(m)}(q^A).\label{eq:f^m+}
	\end{align}
	Further, for $1\leq s\le m-1$,
	\begin{align}\label{relat-1}
		\sM_{s}^{(m)}(q^A) =\sM_{m-s}^{(m)}(q^A).
	\end{align}
\end{theorem}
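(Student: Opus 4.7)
The plan is to expand the $m$-th power via the summation form \eqref{eq:f(a,b)-summation} of $f(a,b)$, regroup the resulting $m$-fold sum by the residue of the index sum modulo $m$, and obtain the coefficient symmetry from a simple reflection on the inner indices.

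Concretely, I would first write
\begin{align*}
f\big((-1)^\kappa q^{k+A'},(-1)^\kappa q^{-k+(A-A')}\big)^m=\sum_{(n_1,\ldots,n_m)\in\mathbb{Z}^m}(-1)^{\kappa N}\,q^{A\sum_i\binom{n_i}{2}+(k+A')N},
\end{align*}
where $N:=n_1+\cdots+n_m$, and then set $N=mt+s$ with $s\in\{0,1,\ldots,m-1\}$ and $n_i=t+r_i$, so that $r_1+\cdots+r_m=s$. The elementary identity
\begin{align*}
\sum_i\binom{n_i}{2}=m\binom{t}{2}+st+\sum_i\binom{r_i}{2}
\end{align*}
splits the $q$-exponent cleanly, the sign factors as $(-1)^{\kappa N}=(-1)^{\kappa mt}(-1)^{\kappa s}$, the $t$-sum reassembles by \eqref{eq:f(a,b)-summation} (read in reverse, with parameters $a=(-1)^{\kappa m}q^{km+As+A'm}$, $b=(-1)^{\kappa m}q^{-km-As+(A-A')m}$, so that $ab=q^{Am}$) into precisely $f\big((-1)^{\kappa m}q^{km+As+A'm},(-1)^{\kappa m}q^{-km-As+(A-A')m}\big)$, and the prefactor $(-1)^{\kappa s}q^{(k+A')s}=(-1)^{\kappa s}q^{ks+A's}$ matches the statement. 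What remains of the inner summation defines
\begin{align*}
\sM_s^{(m)}(q^A):=\sum_{\substack{(r_1,\ldots,r_m)\in\mathbb{Z}^m\\ r_1+\cdots+r_m=s}}q^{A\sum_i\binom{r_i}{2}},
\end{align*}
which is manifestly a (well-defined, since each coefficient is a finite sum) series in $q^A$ depending only on $s$ and $m$. For the symmetry \eqref{relat-1}, the involution $r_i\mapsto 1-r_i$ is a bijection from tuples summing to $s$ onto tuples summing to $m-s$, and the identity $\binom{1-r}{2}=\binom{r}{2}$ valid for every integer $r$ preserves the $q$-exponent, so $\sM_s^{(m)}(q^A)=\sM_{m-s}^{(m)}(q^A)$ for $1\le s\le m-1$.

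The argument is essentially bookkeeping plus a reflection, so no step presents a real obstacle; the only place demanding attention is checking that the reassembled $t$-sum gives exactly the shifted theta factor with the precise signs and parameters claimed. Alternatively, the same expansion can be obtained from Mc Laughlin's formula \eqref{eq:McL} by substituting $q\mapsto q^{A/2}$ and $z\mapsto (-1)^\kappa q^{k+A'-A/2}$, identifying $s=n_{m-1}$, and reading off $\sM_s^{(m)}(q^A)$ from the remaining product of theta factors; the symmetry then follows from Lemma \ref{le:sigma} (applied with its internal $m$ replaced by our $m-1$) via the $\tau$-invariance of $\sigma$, which permutes the inner indices while fixing the last coordinate only when $s=0$ and otherwise sending it to $m-s$. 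This route is more in line with the preliminaries already developed in this section, but it requires justifying the half-integer exponents in the substitution, which the parity computation built into $\sigma$ takes care of.
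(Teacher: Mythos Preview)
Your primary argument is correct and is genuinely different from the paper's route. The paper establishes \eqref{eq:f^m+} by specializing Schr\"oter's identity \eqref{eq:Sch} (for $m=2$) and Mc~Laughlin's formula \eqref{eq:McL} (for $m\ge 3$) via $q\mapsto q^{A/2}$, $z\mapsto (-1)^\kappa q^{k+A'-A/2}$; this produces $\sM_s^{(m)}(q^A)$ as a finite sum of explicit theta products (your $\sigma$-expressions), and the symmetry \eqref{relat-1} is then deduced from Lemma~\ref{le:sigma} by a four-case analysis of the bijection $\tau$. Your approach bypasses both Mc~Laughlin's formula and Lemma~\ref{le:sigma} entirely: writing out the $m$-fold sum directly and regrouping by the residue of $N$ modulo $m$ yields $\sM_s^{(m)}(q^A)=\sum_{r_1+\cdots+r_m=s}q^{A\sum_i\binom{r_i}{2}}$, and the reflection $r_i\mapsto 1-r_i$ together with $\binom{1-r}{2}=\binom{r}{2}$ gives the symmetry in one line. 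This is strictly more elementary and makes both the existence and the symmetry transparent; the only thing the paper's version buys is an explicit theta-product representation of $\sM_s^{(m)}$, but since the sequel of the paper uses only the expansion \eqref{eq:f^m+} and the relation \eqref{relat-1} (never the internal structure of $\sM_s^{(m)}$), that extra information is not needed. Your closing remark about the alternative Mc~Laughlin route is exactly what the paper does, so you have in effect presented both proofs.
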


\begin{proof}
	The case of $m=1$ is trivial. In particular,
	\begin{align}
		\sM_{0}^{(1)}(q^A)=1.
	\end{align}
	For $m=2$, we replace $q$ by $q^{\frac{A}{2}}$ and choose $z=(-1)^{\kappa}q^{k+(A'-\frac{A}{2})}$ in \eqref{eq:Sch}. Then
	\begin{align*}
		&f\big((-1)^{\kappa}q^{k+A'},(-1)^{\kappa}q^{-k+(A-A')}\big)^2\\
		&=\sum_{s=0}^1 (-1)^{\kappa s}q^{ks}f\big(q^{2k+As+2A'},q^{-2k-As+2(A-A')}\big)\\
		&\quad\times q^{A's}q^{\frac{s^2-s}{2}A}f\big(q^{(1+s)A},q^{(1-s)A}\big).
	\end{align*}
	Thus, for $0\le s\le 1$,
	\begin{align}
		\sM_{s}^{(2)}(q^A) = q^{\frac{s^2-s}{2}A}f\big(q^{(1+s)A},q^{(1-s)A}\big).
	\end{align}
	For any $m\ge 3$, we replace $q$ by $q^{\frac{A}{2}}$ and choose $z=(-1)^{\kappa}q^{k+(A'-\frac{A}{2})}$ in \eqref{eq:McL}. Then
	\begin{align*}
		&f\big((-1)^{\kappa}q^{k+A'},(-1)^{\kappa}q^{-k+(A-A')}\big)^m\\
		&= \sum_{s=0}^{m-1}(-1)^{\kappa s}q^{ks}f\big((-1)^{\kappa m}q^{km+As+A'm},(-1)^{\kappa m}q^{-km-As+(A-A')m}\big)\\
		&\quad\times q^{A's}\sum_{n_1=0}^1\cdots \sum_{n_{m-2}=0}^{m-2}q^{\frac{n_1^2+(n_{2}-n_1)^2+\cdots +(n_{m-2}-n_{m-3})^2+(s-n_{m-2})^2-s}{2}A}\\
		&\quad\times f(q^{(1+n_1)A},q^{(1-n_1)A})\\
		&\quad\times \prod_{i=2}^{m-2} f\big(q^{(\frac{i(i+1)}{2}+(i+1)n_{i-1}-i n_i)A},q^{(\frac{i(i+1)}{2}-(i+1)n_{i-1}+i n_i)A}\big)\\
		&\quad\quad\quad\quad\times f\big(q^{(\frac{m(m-1)}{2}+m n_{m-2}-(m-1)s)A},q^{(\frac{m(m-1)}{2}-m n_{m-2}+(m-1)s)A}\big).
	\end{align*}
	Recalling the notation in Lemma \ref{le:sigma}, we find that for $0\le s\le m-1$,
	\begin{align}\label{eq:M-m>=3}
		\sM_{s}^{(m)}(q^A)&=\sum_{(n_1,\ldots,n_{m-2})\in\cI_{m-2}}\sigma\big(n_1,\ldots,n_{m-2},s;q^{A}\big).
	\end{align}
	Therefore, \eqref{eq:f^m+} is established.
	
	For \eqref{relat-1}, we notice that the $m=1$ and $2$ cases are trivial. Assuming that $m\ge 3$, we deduce from the above that for $1\le s\le m-1$,
	\begin{align*}
		\sM_{m-s}^{(m)}(q^A)&=\sum_{(n_1,\ldots,n_{m-2})\in\cI_{m-2}}\sigma\big(n_1,\ldots,n_{m-2},m-s;q^{A}\big)\\
		\text{\tiny ($\tau$ in \eqref{eq:tau-def} is bijective)}&=\sum_{(n_1,\ldots,n_{m-2})\in\cI_{m-2}}\sigma\big(\tau_1(n_1),\ldots,\tau_{m-2}(n_{m-2}),\tau_{m-1}(s);q^{A}\big)\\
		\text{\tiny (by \eqref{eq:sigma-relation})}&= \sum_{(n_1,\ldots,n_{m-2})\in\cI_{m-2}}\sigma\big(n_1,\ldots,n_{m-2},s;q^{A}\big)\\
		\text{\tiny (by \eqref{eq:M-m>=3})}&= \sM_{s}^{(m)}(q^A),
	\end{align*}
	thereby concluding our proof.
\end{proof}

Finally, \eqref{relat-1} allows us to pair terms in \eqref{eq:f^m+}.

\begin{corollary}\label{coro:power-pairing}
	Let $\sM_{s}^{(m)}$ be as in Theorem \ref{th:theta-product}. Then
	\begin{align}
		f\big((-1)^{\kappa}q^{k+A'},(-1)^{\kappa}q^{-k+(A-A')}\big)^m=\sum_{s=0}^{m-1}\sP_s \sM_{s}^{(m)}(q^A),\label{eq:f^m+-new}
	\end{align}
	where
	\begin{align}
		\sP_0 = f\big((-1)^{\kappa m}q^{km+A'm},(-1)^{\kappa m}q^{-km+(A-A')m}\big),
	\end{align}
	and for $1\le s\le m-1$,
	\begin{align}
		\sP_s&=\frac{1}{2}\Big\{(-1)^{\kappa s}q^{(k+A')s}\notag\\
		&\quad\quad\quad\times f\big((-1)^{\kappa m}q^{km+As+A'm},(-1)^{\kappa m}q^{-km-As+(A-A')m}\big)\notag\\
		&\quad\quad+(-1)^{\kappa (m-s)}q^{(k+A')(m-s)}\notag\\
		&\quad\quad\quad\times f\big((-1)^{\kappa m}q^{km-As+(A+A')m},(-1)^{\kappa m}q^{-km+As-A'm}\big)\Big\}.
	\end{align}
\end{corollary}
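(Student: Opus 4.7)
The plan is to derive the pairing formula directly from the expansion \eqref{eq:f^m+} established in Theorem \ref{th:theta-product}, using only the symmetry relation \eqref{relat-1}. First I would rewrite the summand of \eqref{eq:f^m+} in the abbreviated form $a_s\,\sM_s^{(m)}(q^A)$, where
\[
a_s := (-1)^{\kappa s}q^{(k+A')s}f\big((-1)^{\kappa m}q^{km+As+A'm},(-1)^{\kappa m}q^{-km-As+(A-A')m}\big),
\]
so that the left-hand side equals $\sum_{s=0}^{m-1} a_s\,\sM_s^{(m)}(q^A)$. By inspection the $s=0$ term is precisely $\sP_0\,\sM_0^{(m)}(q^A)$, which handles the unpaired summand.

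For the remaining range $1\le s\le m-1$, my plan is to apply the standard symmetrization trick. Since $s\mapsto m-s$ is an involution on $\{1,\ldots,m-1\}$, I may write
\[
\sum_{s=1}^{m-1} a_s\,\sM_s^{(m)}(q^A) = \tfrac{1}{2}\sum_{s=1}^{m-1}\Big(a_s\,\sM_s^{(m)}(q^A)+a_{m-s}\,\sM_{m-s}^{(m)}(q^A)\Big).
\]
Invoking \eqref{relat-1} to substitute $\sM_{m-s}^{(m)}(q^A)=\sM_s^{(m)}(q^A)$ factors this sum as $\sum_{s=1}^{m-1}\tfrac{1}{2}(a_s+a_{m-s})\,\sM_s^{(m)}(q^A)$, so it suffices to verify that $\tfrac{1}{2}(a_s+a_{m-s})$ agrees with the expression claimed for $\sP_s$.

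The verification is a routine bookkeeping step: replacing $s$ by $m-s$ in the definition of $a_s$ and simplifying the exponents of $q$ inside the theta factor, via $km+A(m-s)+A'm = km-As+(A+A')m$ and $-km-A(m-s)+(A-A')m = -km+As-A'm$, produces exactly the second summand in the stated formula for $\sP_s$, while $a_s$ itself is the first summand. There is no substantive obstacle; the whole corollary is essentially the formal consequence of the symmetry \eqref{relat-1} combined with the involution $s\mapsto m-s$, and the only care required is in tracking the constants in the exponents and the sign $(-1)^{\kappa(m-s)}$ through the substitution.
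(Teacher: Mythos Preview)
Your proposal is correct and follows essentially the same approach as the paper: both derive the corollary from the expansion \eqref{eq:f^m+} by splitting off the $s=0$ term, symmetrizing the remaining sum over $1\le s\le m-1$ via the involution $s\mapsto m-s$, and invoking the relation \eqref{relat-1} to pair the coefficients. The only difference is presentational---the paper writes the two halves explicitly before reindexing, while you average first---and your exponent bookkeeping for $a_{m-s}$ matches the paper's computation.
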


\begin{proof}
	Recalling \eqref{relat-1}, we deduce from \eqref{eq:f^m+} that
	\begin{align*}
		&f\big((-1)^{\kappa}q^{k+A'},(-1)^{\kappa}q^{-k+(A-A')}\big)^m\notag\\
		&=f\big((-1)^{\kappa m}q^{km+A'm},(-1)^{\kappa m}q^{-km+(A-A')m}\big)\sM_{0}^{(m)}(q^A)\notag\\
		&\quad+\frac{1}{2}\sum_{s=1}^{m-1}(-1)^{\kappa s}q^{ks}f\big((-1)^{\kappa m}q^{km+As+A'm},(-1)^{\kappa m}q^{-km-As+(A-A')m}\big)\notag\\
		&\quad\quad\times q^{A's}\sM_{s}^{(m)}(q^A)\\
		&\quad+\frac{1}{2}\sum_{s=1}^{m-1}(-1)^{\kappa s}q^{ks}f\big((-1)^{\kappa m}q^{km+As+A'm},(-1)^{\kappa m}q^{-km-As+(A-A')m}\big)\notag\\
		&\quad\quad\times q^{A's}\sM_{m-s}^{(m)}(q^A).
	\end{align*}
	For the last summation in the above, we then change the indices by $s\mapsto m-s$. The desired result therefore follows.
\end{proof}

\begin{corollary}\label{coro:power-pairing-A'-A'}
	Let $\sM_{s}^{(m)}$ be as in Theorem \ref{th:theta-product}. Let $A=2A'$. Then
	\begin{align}
		f\big((-1)^{\kappa}q^{k+A'},(-1)^{\kappa}q^{-k+A'}\big)^m=\sum_{s=0}^{m-1}\sP_s \sM_{s}^{(m)}(q^A),\label{eq:f^m+-new-A'-A'}
	\end{align}
	where
	\begin{align}
		\sP_0 = f\big((-1)^{\kappa m}q^{km+A'm},(-1)^{\kappa m}q^{-km+A'm}\big),
	\end{align}
	and for $1\le s\le m-1$,
	\begin{align}
		\sP_s&=\frac{1}{4}\Big\{(-1)^{\kappa s}q^{(k+A')s}\notag\\
		&\quad\quad\quad\times f\big((-1)^{\kappa m}q^{km+A'(m+2s)},(-1)^{\kappa m}q^{-km+A'(m-2s)}\big)\notag\\
		&\quad\quad+(-1)^{\kappa s}q^{(-k+A')s}\notag\\
		&\quad\quad\quad\times f\big((-1)^{\kappa m}q^{km+A'(m-2s)},(-1)^{\kappa m}q^{-km+A'(m+2s)}\big)\Big\}\notag\\
		&\quad+\frac{1}{4}\Big\{(-1)^{\kappa (m-s)}q^{(k+A')(m-s)}\notag\\
		&\quad\quad\quad\times f\big((-1)^{\kappa m}q^{km+A'(3m-2s)},(-1)^{\kappa m}q^{-km+A'(-m+2s)}\big)\notag\\
		&\quad\quad+(-1)^{\kappa (m-s)}q^{(-k+A')(m-s)}\notag\\
		&\quad\quad\quad\times f\big((-1)^{\kappa m}q^{km+A'(-m+2s)},(-1)^{\kappa m}q^{-km+A'(3m-2s)}\big)\Big\}.
	\end{align}
\end{corollary}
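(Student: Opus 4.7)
The plan is to derive this specialization from Corollary \ref{coro:power-pairing} by exploiting an extra $k \mapsto -k$ symmetry that becomes available when $A=2A'$. In that case the left-hand side $f\big((-1)^{\kappa}q^{k+A'},(-1)^{\kappa}q^{-k+A'}\big)^{m}$ is invariant under $k \mapsto -k$, because swapping the two arguments of each $f$ (legitimate by $f(a,b)=f(b,a)$) reproduces the original expression unchanged.

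First I would substitute $A=2A'$ directly into Corollary \ref{coro:power-pairing} to obtain an expansion of the form $\sum_{s=0}^{m-1}\sP_{s}^{+}\,\sM_{s}^{(m)}(q^{2A'})$, where each $\sP_{s}^{+}$ with $s\ge 1$ is the two-term expression already arising from the identification $\sM_{s}^{(m)}=\sM_{m-s}^{(m)}$ in that corollary. Then I would apply the same corollary a second time, now with $k$ replaced by $-k$, to obtain another expansion $\sum_{s=0}^{m-1}\sP_{s}^{-}\,\sM_{s}^{(m)}(q^{2A'})$. The crucial point is that the coefficient series $\sM_{s}^{(m)}(q^{2A'})$ are independent of $k$, so they are common to both expansions and can be matched term by term.

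Averaging the two expansions and setting $\sP_{s}:=\tfrac{1}{2}(\sP_{s}^{+}+\sP_{s}^{-})$ then yields, for each $s\ge 1$, a sum of four theta-series terms. Rewriting each of the two summands of $\sP_{s}^{-}$ using $f(a,b)=f(b,a)$ to swap the arguments of its theta factor produces precisely the four summands appearing in the statement: the first and third terms of the claimed $\sP_{s}$ come from $\sP_{s}^{+}$, while the second and fourth terms come from $\sP_{s}^{-}$. The case $s=0$ is immediate, since $\sP_{0}^{+}=\sP_{0}^{-}$ already by $f(a,b)=f(b,a)$, so averaging is trivial and the stated formula for $\sP_{0}$ is recovered.

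There is no conceptual obstacle, as the argument is purely a symmetrization built on top of Corollary \ref{coro:power-pairing}. The only delicate step is the exponent and sign bookkeeping, namely verifying that after substituting $A=2A'$ the exponents $A'(m+2s)$, $A'(m-2s)$, $A'(3m-2s)$, $A'(-m+2s)$ appearing inside the theta factors line up correctly with the shifts produced by Corollary \ref{coro:power-pairing} for $k$ and for $-k$, and that the prefactors $(-1)^{\kappa s}q^{(\pm k+A')s}$ and $(-1)^{\kappa(m-s)}q^{(\pm k+A')(m-s)}$ fall into their claimed positions with the factor $\tfrac{1}{4}$ arising from the $\tfrac{1}{2}\cdot\tfrac{1}{2}$ of the double averaging.
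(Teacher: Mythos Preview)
Your proposal is correct and matches the paper's approach essentially line for line: the paper writes $2f\big((-1)^{\kappa}q^{k+A'},(-1)^{\kappa}q^{-k+A'}\big)^m$ as the sum of the expression and its $k\mapsto -k$ image, applies Corollary~\ref{coro:power-pairing} to each copy, and then collects terms against the common factors $\sM_{s}^{(m)}(q^{A})$. Your identification of the $\tfrac14$ as the product of the $\tfrac12$ already present in Corollary~\ref{coro:power-pairing} with the $\tfrac12$ from averaging is exactly how the paper obtains it.
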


\begin{proof}
	Recalling Corollary \ref{coro:power-pairing}, we find that
	\begin{align*}
		&2f\big((-1)^{\kappa}q^{k+A'},(-1)^{\kappa}q^{-k+A'}\big)^m\notag\\
		&=f\big((-1)^{\kappa}q^{k+A'},(-1)^{\kappa}q^{-k+A'}\big)^m+f\big((-1)^{\kappa}q^{-k+A'},(-1)^{\kappa}q^{k+A'}\big)^m\\
		&=f\big((-1)^{\kappa m}q^{km+A'm},(-1)^{\kappa m}q^{-km+A'm}\big)\sM_{0}^{(m)}(q^A)\\
		&\quad+\frac{1}{2}\Big\{(-1)^{\kappa s}q^{(k+A')s}\notag\\
		&\quad\quad\quad\times f\big((-1)^{\kappa m}q^{km+2A's+A'm},(-1)^{\kappa m}q^{-km-2A's+A'm}\big)\notag\\
		&\quad\quad+(-1)^{\kappa (m-s)}q^{(k+A')(m-s)}\notag\\
		&\quad\quad\quad\times f\big((-1)^{\kappa m}q^{km-2A's+3A'm},(-1)^{\kappa m}q^{-km+2A's-A'm}\big)\Big\}\sM_{s}^{(m)}(q^A)\\
		&\quad+f\big((-1)^{\kappa m}q^{-km+A'm},(-1)^{\kappa m}q^{km+A'm}\big)\sM_{0}^{(m)}(q^A)\\
		&\quad+\frac{1}{2}\Big\{(-1)^{\kappa s}q^{(-k+A')s}\notag\\
		&\quad\quad\quad\times f\big((-1)^{\kappa m}q^{-km+2A's+A'm},(-1)^{\kappa m}q^{km-2A's+A'm}\big)\notag\\
		&\quad\quad+(-1)^{\kappa (m-s)}q^{(-k+A')(m-s)}\notag\\
		&\quad\quad\quad\times f\big((-1)^{\kappa m}q^{-km-2A's+3A'm},(-1)^{\kappa m}q^{km+2A's-A'm}\big)\Big\}\sM_{s}^{(m)}(q^A)\\
		&=2f\big((-1)^{\kappa m}q^{km+A'm},(-1)^{\kappa m}q^{-km+A'm}\big)\sM_{0}^{(m)}(q^A)\\
		&\quad+\frac{1}{2}\Big\{(-1)^{\kappa s}q^{(k+A')s}\notag\\
		&\quad\quad\quad\times f\big((-1)^{\kappa m}q^{km+A'(m+2s)},(-1)^{\kappa m}q^{-km+A'(m-2s)}\big)\notag\\
		&\quad\quad+(-1)^{\kappa s}q^{(-k+A')s}\notag\\
		&\quad\quad\quad\times f\big((-1)^{\kappa m}q^{km+A'(m-2s)},(-1)^{\kappa m}q^{-km+A'(m+2s)}\big)\Big\}\sM_{s}^{(m)}(q^A)\\
		&\quad+\frac{1}{2}\Big\{(-1)^{\kappa (m-s)}q^{(k+A')(m-s)}\notag\\
		&\quad\quad\quad\times f\big((-1)^{\kappa m}q^{km+A'(3m-2s)},(-1)^{\kappa m}q^{-km+A'(-m+2s)}\big)\notag\\
		&\quad\quad+(-1)^{\kappa (m-s)}q^{(-k+A')(m-s)}\notag\\
		&\quad\quad\quad\times f\big((-1)^{\kappa m}q^{km+A'(-m+2s)},(-1)^{\kappa m}q^{-km+A'(3m-2s)}\big)\Big\}\sM_{s}^{(m)}(q^A),
	\end{align*}
	which is exactly what we need.
\end{proof}

%

\subsection{Two theta powers}

Our next concern is about reformulating products of two theta powers.

\begin{theorem}\label{th:theta-product-2}
	Let $A$, $A'$ and $k$ be integers and let $\kappa\in\{0,1\}$. For any $m_1,m_2\ge 1$ with $m=m_1+m_2$, there exist series $\sN_{s}^{(m_1,m_2)}(q^{A'},q^{A})$ $(0\le s\le m-1)$, depending only on $s$, $m_1$ and $m_2$, such that
	\begin{align}\label{eq:theta-product-2}
		&f\big((-1)^{\kappa}q^{k+A'},(-1)^{\kappa}q^{-k+(A-A')}\big)^{m_1}f\big((-1)^{\kappa}q^{k+(A-A')},(-1)^{\kappa}q^{-k+A'}\big)^{m_2}\notag\\ &=\sum_{s=0}^{m-1}(-1)^{\kappa s}q^{ks}\notag\\
		&\quad\times f\big((-1)^{\kappa m}q^{km+As+Am_2+A'(m_1-m_2)},(-1)^{\kappa m}q^{-km-As+Am_1-A'(m_1-m_2)}\big)\notag\\
		&\quad\times \sN_{s}^{(m_1,m_2)}(q^{A'},q^A).
	\end{align}
\end{theorem}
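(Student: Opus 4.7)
The approach is to specialize Mc~Laughlin's two--parameter generalization of Schr\"oter's formula \eqref{eq:McL-2}, in direct analogy with how Theorem~\ref{th:theta-product} was derived from \eqref{eq:McL}. Specifically, I will replace $q$ by $q^{A/2}$ and set
\[
x=(-1)^{\kappa}q^{k},\qquad y=q^{A'-A/2},
\]
in identity \eqref{eq:McL-2}. Then $qxy=(-1)^{\kappa}q^{\,k+A'}$ and $qx/y=(-1)^{\kappa}q^{\,k+(A-A')}$, so the left-hand side of \eqref{eq:McL-2} becomes exactly the product of the two theta powers on the left of \eqref{eq:theta-product-2}.

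Next, I will identify the summation index $n_{m-1}$ on the right of \eqref{eq:McL-2} with the index $s$ in the statement, and extract the two factors that visibly depend on $s$ in the desired way. The prefactor $x^{n_{m-1}}$ contributes exactly $(-1)^{\kappa s}q^{ks}$. For the final theta factor $f(q^{m+2n_{m-1}}x^{m}y^{m_1-m_2},q^{m-2n_{m-1}}x^{-m}y^{m_2-m_1})$, a direct calculation using $(A/2)m+(A/2)(m_2-m_1)=Am_2$ (and the symmetric identity for the conjugate argument) shows that it specializes precisely to
\[
f\bigl((-1)^{\kappa m}q^{km+As+Am_2+A'(m_1-m_2)},(-1)^{\kappa m}q^{-km-As+Am_1-A'(m_1-m_2)}\bigr),
\]
matching the statement. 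Everything else, that is, the prefactor $y^{2n_{m_1}-n_{m-1}}$, the exponential $q^{(A/2)(n_1^{2}+(n_2-n_1)^{2}+\cdots+(n_{m-1}-n_{m-2})^{2})}$, the initial theta $f(q^{2+2n_1}y^{2},q^{2-2n_1}y^{-2})$, and the intermediate theta factors indexed by $2\le i\le m_1$ and $m_1+1\le i\le m-1$, will be collected into $\sN_{s}^{(m_1,m_2)}(q^{A'},q^{A})$, which by construction depends only on $s,m_1,m_2$ (since all dependence on $k$ and $\kappa$ has been extracted into the two factors above).

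The one genuine point to check, which I regard as the main potential pitfall, is that this collected expression truly is a series in $q^{A'}$ and $q^{A}$. The intermediate theta factors and the first theta factor, after substitution, have exponents of the form $A\cdot(\text{integer})\pm 2A'$ or $A\cdot(\text{integer})\pm 2A'm_1$, which is already fine. The delicate part is the half-integer contribution arising from $y=q^{A'-A/2}$ in the prefactor: combining
\[
y^{2n_{m_1}-n_{m-1}}\cdot q^{(A/2)\sum_{i=1}^{m-1}(n_i-n_{i-1})^{2}},
\]
with $n_0:=0$, contributes to the exponent the quantity
\[
A'(2n_{m_1}-s)\;+\;\tfrac{A}{2}\Bigl(\textstyle\sum_{i=1}^{m-1}(n_i-n_{i-1})^{2}+s-2n_{m_1}\Bigr).
\]
The factor of $A/2$ will be absorbed provided the bracketed quantity is always even. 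This is handled by the parity observation $(n_i-n_{i-1})^{2}\equiv n_i-n_{i-1}\pmod 2$, which gives $\sum_{i=1}^{m-1}(n_i-n_{i-1})^{2}\equiv n_{m-1}=s\pmod 2$, hence the bracket is congruent to $2s-2n_{m_1}\equiv 0\pmod 2$.

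With this parity check in hand, the collected expression lies in $\mathbb{Z}[\![q^{A'},q^{A}]\!]$ (up to an overall monomial absorbed into its definition), so we may define
\[
\sN_{s}^{(m_1,m_2)}(q^{A'},q^{A}):=\sum_{n_1=0}^{1}\!\!\cdots\!\!\sum_{n_{m-2}=0}^{m-2}\bigl(\text{prefactor and all but the last theta, evaluated at }n_{m-1}=s\bigr),
\]
and \eqref{eq:theta-product-2} follows by gathering terms in the specialization of \eqref{eq:McL-2}. Boundary cases $m_1=1$ or $m_2=1$ reduce the corresponding product over $i$ to an empty product and cause no additional difficulty; the base case $m=2$ (so $m_1=m_2=1$) can be verified directly against \eqref{eq:Sch} and is consistent with this scheme.
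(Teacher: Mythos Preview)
Your proposal is correct and follows essentially the same approach as the paper: specialize \eqref{eq:McL-2} via $q\mapsto q^{A/2}$, $x=(-1)^{\kappa}q^{k}$, $y=q^{A'-A/2}$, rename $n_{m-1}$ as $s$, and absorb everything except $x^{s}$ and the last theta factor into $\sN_{s}^{(m_1,m_2)}$. In fact you supply more detail than the paper does, since the paper omits the parity verification that the leftover factor is genuinely a series in $q^{A'}$ and $q^{A}$, which you carry out explicitly.
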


\begin{proof}
	We replace $q$ by $q^{\frac{A}{2}}$ and choose $x=(-1)^{\kappa}q^{k}$ and $y=q^{A'-\frac{A}{2}}$ in \eqref{eq:McL-2}. Then the index $n_{m-1}$ in \eqref{eq:McL-2} is renamed by $s$ so we have the summation with index $s$ in \eqref{eq:theta-product-2}. The terms with $\sN_{s}^{(m_1,m_2)}(q^{A'},q^A)$ removed in the summand of \eqref{eq:theta-product-2} are given by the following factor
	of the summand in \eqref{eq:McL-2}:
	\begin{align*}
		x^{s}f\big(q^{m+2s}x^my^{m_1-m_2},q^{m-2s}x^{-m}y^{m_2-m_1}\big).
	\end{align*}
	We will not present the explicit expression of $\sN_{s}^{(m_1,m_2)}(q^{A'},q^A)$ but it can be calculated by summing the remaining factors of the summand in \eqref{eq:McL-2} over indices $n_1,\ldots,n_{m-2}$.
\end{proof}

\begin{corollary}\label{coro:f/f-expansion}
	Let $\sM_{s}^{(m)}$ and $\sN_{s}^{(m_1,m_2)}$ be as in Theorems \ref{th:theta-product} and \ref{th:theta-product-2}. Then
	\begin{align}
		\left(\frac{f\big({-q^{2k}},-q^{\mu-2k}\big)}
		{f\big((-1)^{\kappa}q^k,(-1)^{\kappa}q^{\mu-k}\big)}\right)^m&=\frac{1}{f\big({-q^{\mu}},-q^{2\mu}\big)^m}\cdot 
		\Bigg\{\sum_{s=0}^{m-1}\sQ_s^{(0)}\sM_{s}^{(m)}(q^{3\mu})\notag\\
		&\quad+\sum_{m_1=1}^{m-1}\sum_{s=0}^{m-1}\sQ_{s}^{(m_1)}
		\sN_{s}^{(m_1,m-m_1)}(q^{\mu},q^{3\mu})\Bigg\},
	\end{align}
	where
	\begin{align}
		\sQ_{0}^{(0)}&=f\big((-1)^{\kappa m}q^{3km+\mu m},(-1)^{\kappa m}q^{-3km+2\mu m}\big)\notag\\
		&\quad +(-1)^{(\kappa+1)m}q^{km} f\big((-1)^{\kappa m}q^{3km+2\mu m},(-1)^{\kappa m}q^{-3km+\mu m}\big),
	\end{align}
	and for $1\le s\le m-1$,
	\begin{align}
		\sQ_{s}^{(0)}&=\frac{1}{2}\Big\{(-1)^{\kappa s}q^{(3k+\mu)s}\notag\\
		&\quad\quad\quad\times f\big((-1)^{\kappa m}q^{3km+\mu(m+3 s)},(-1)^{\kappa m}q^{-3km+\mu(2m-3 s)}\big)\notag\\
		&\quad\quad+(-1)^{(\kappa+1)m+\kappa s}q^{km+(-3k+\mu)s}\notag\\
		&\quad\quad\quad\times f\big((-1)^{\kappa m}q^{3km+\mu(2m-3 s)},(-1)^{\kappa m}q^{-3km+\mu(m+3 s)}\big)\Big\}\notag\\
		&\quad+\frac{1}{2}\Big\{(-1)^{(\kappa+1)m+\kappa (m-s)}q^{km+(-3k+\mu)(m-s)}\notag\\
		&\quad\quad\quad\times f\big((-1)^{\kappa m}q^{3km+\mu(-m+3 s)},(-1)^{\kappa m}q^{-3km+\mu(4m-3 s)}\big)\notag\\
		&\quad\quad+(-1)^{\kappa (m-s)}q^{(3k+\mu)(m-s)}\notag\\
		&\quad\quad\quad\times f\big((-1)^{\kappa m}q^{3km+\mu(4m-3 s)},(-1)^{\kappa m}q^{-3km+\mu(-m+3 s)}\big)\Big\},
	\end{align}
	and for $1\le m_1\le m-1$ and $0\le s\le m-1$,
	\begin{align}
		\sQ_{s}^{(m_1)}&=\frac{1}{2}\binom{m}{m_1}\Big\{(-1)^{(\kappa+1)(m-m_1)+\kappa s}q^{k(m-m_1)+3ks}\notag\\
		&\quad\quad\quad\times f\big((-1)^{\kappa m}q^{3km+\mu(2m-m_1+3 s)},(-1)^{\kappa m}q^{-3km+\mu(m+m_1-3s)}\big)\notag\\
		&\quad\quad+(-1)^{(\kappa+1)m_1+\kappa s}q^{km_1-3ks}\notag\\
		&\quad\quad\quad\times f\big((-1)^{\kappa m}q^{3km+\mu(m+m_1-3s)},(-1)^{\kappa m}q^{-3km+\mu(2m-m_1+3 s)}\big)\Big\}.
	\end{align}
\end{corollary}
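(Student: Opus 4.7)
Following the roadmap in Section \ref{sec:outline}, I would start by invoking the quintuple product identity recalled there to factor
\[
\frac{f(-q^{2k},-q^{\mu-2k})\,f(-q^\mu,-q^{2\mu})}{f((-1)^\kappa q^k,(-1)^\kappa q^{\mu-k})}=X+Y,
\]
where $X:=f((-1)^\kappa q^{3k+\mu},(-1)^\kappa q^{-3k+2\mu})$ and $Y:=(-1)^{\kappa+1}q^k\,Y_0$ with $Y_0:=f((-1)^\kappa q^{3k+2\mu},(-1)^\kappa q^{-3k+\mu})$. Raising to the $m$-th power, dividing by $f(-q^\mu,-q^{2\mu})^m$, and applying the binomial theorem yields
\[
\left(\frac{f(-q^{2k},-q^{\mu-2k})}{f((-1)^\kappa q^k,(-1)^\kappa q^{\mu-k})}\right)^m=\frac{1}{f(-q^\mu,-q^{2\mu})^m}\left(X^m+Y^m+\sum_{m_1=1}^{m-1}\binom{m}{m_1}X^{m-m_1}Y^{m_1}\right).
\]

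For the pure-power contributions $X^m+Y^m$, I would apply Corollary \ref{coro:power-pairing} separately to $X^m$ (with the parameter substitution $k\mapsto 3k$, $A=3\mu$, $A'=\mu$) and to $Y_0^m$ (with $k\mapsto 3k$, $A=3\mu$, $A'=2\mu$), then multiply the $Y_0^m$ expansion by the factor $(-1)^{(\kappa+1)m}q^{km}$ coming from $Y^m$ and add. The $s=0$ contributions from the two expansions assemble into $\sQ_0^{(0)}$, and for each $1\le s\le m-1$ the four resulting theta-product summands match the four terms inside $\sQ_s^{(0)}$ after applying the theta-function symmetries $f(a,b)=f(b,a)$ and $f(a,b)=a\,f(a^2 b,1/a)$ (the latter derivable by the substitution $n\mapsto n+1$ in \eqref{eq:f(a,b)-summation}) to the two $Y_0^m$-summands.

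For the mixed-power contributions, I would use the symmetry $\binom{m}{m_1}=\binom{m}{m-m_1}$ to symmetrize
\[
\sum_{m_1=1}^{m-1}\binom{m}{m_1}X^{m-m_1}Y^{m_1}=\frac{1}{2}\sum_{m_1=1}^{m-1}\binom{m}{m_1}\bigl(X^{m-m_1}Y^{m_1}+X^{m_1}Y^{m-m_1}\bigr),
\]
and apply Theorem \ref{th:theta-product-2} (with $k\mapsto 3k$, $A=3\mu$, $A'=\mu$) to each of $X^{m_1}Y_0^{m-m_1}$ and $X^{m-m_1}Y_0^{m_1}$. The first directly produces $\sN_s^{(m_1,m-m_1)}(q^\mu,q^{3\mu})$ paired with the theta coefficient matching Term A of $\sQ_s^{(m_1)}$; the second initially produces $\sN_s^{(m-m_1,m_1)}(q^\mu,q^{3\mu})$, which upon reindexing together with the $y\mapsto y^{-1}$ symmetry of Mc Laughlin's identity \eqref{eq:McL-2} (relating $\sN_s^{(m_2,m_1)}(q^{A-A'},q^A)$ to $\sN_s^{(m_1,m_2)}(q^{A'},q^A)$) is likewise recast as a multiple of $\sN_s^{(m_1,m-m_1)}(q^\mu,q^{3\mu})$, this time with the theta coefficient of Term B. Collecting these two contributions yields $\sQ_s^{(m_1)}$ as stated.

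The principal obstacle will be the meticulous bookkeeping of signs, $q$-exponents, and theta-function arguments required to align the natural expansions from Corollary \ref{coro:power-pairing} and Theorem \ref{th:theta-product-2} with the stated four-term form of $\sQ_s^{(0)}$ and two-term form of $\sQ_s^{(m_1)}$. The matching is not termwise: it requires repeated use of $f(a,b)=f(b,a)$ and the shift identity $f(a,b)=a\,f(a^2 b,1/a)$ to reshape theta-function arguments, as well as tracking how both of the symmetrized products $X^{m-m_1}Y^{m_1}$ and $X^{m_1}Y^{m-m_1}$ feed into the same $\sN_s^{(m_1,m-m_1)}$-coefficient via the underlying symmetry of Mc Laughlin's identity. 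Once these transformations are executed, the desired expansion follows by collecting like $\sM$- and $\sN$-coefficients.
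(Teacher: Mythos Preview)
Your overall architecture matches the paper's: quintuple product identity, binomial expansion of $(X+Y)^m$, then expansion of the pure and mixed theta powers via Corollary~\ref{coro:power-pairing} and Theorem~\ref{th:theta-product-2}. The divergence is in how you handle $Y_0^m$ and the second half of the symmetrized mixed sum, and there the paper's route is both simpler and avoids an unproved step in yours.

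For $Y_0^m$ and for $X^{m-m_1}Y_0^{m_1}$, the paper does \emph{not} apply the expansions with a different $A'$ (you use $A'=2\mu$) nor does it invoke any symmetry of $\sN_s^{(m_1,m_2)}$. Instead it uses only $f(a,b)=f(b,a)$ to rewrite $Y_0=f((-1)^\kappa q^{-3k+\mu},(-1)^\kappa q^{3k+2\mu})$ and $X=f((-1)^\kappa q^{-3k+2\mu},(-1)^\kappa q^{3k+\mu})$, and then applies Corollary~\ref{coro:power-pairing} and Theorem~\ref{th:theta-product-2} with the substitution $(k,A',A)\mapsto(-3k,\mu,3\mu)$. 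The point is that $\sM_s^{(m)}(q^A)$ and $\sN_s^{(m_1,m_2)}(q^{A'},q^A)$ depend only on $A'$ and $A$, not on $k$, so the $(-3k)$-instance produces the \emph{same} $\sM_s^{(m)}(q^{3\mu})$ and $\sN_s^{(m_1,m-m_1)}(q^\mu,q^{3\mu})$ as the $(3k)$-instance. The theta coefficients then come out directly in the stated form, with no shift identity $f(a,b)=af(a^2b,1/a)$ needed.

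Your proposed mechanism for the mixed terms is the weak point. After applying Theorem~\ref{th:theta-product-2} to $X^{m-m_1}Y_0^{m_1}$ with $A'=\mu$ you obtain $\sN_s^{(m-m_1,m_1)}(q^\mu,q^{3\mu})$, and you then appeal to a ``$y\mapsto y^{-1}$ symmetry'' of \eqref{eq:McL-2} relating $\sN_s^{(m_2,m_1)}(q^{A-A'},q^A)$ to $\sN_s^{(m_1,m_2)}(q^{A'},q^A)$. But as stated this would relate $\sN_s^{(m-m_1,m_1)}(q^{2\mu},q^{3\mu})$ to $\sN_s^{(m_1,m-m_1)}(q^\mu,q^{3\mu})$, not $\sN_s^{(m-m_1,m_1)}(q^\mu,q^{3\mu})$; and in any case no such identity is established in the paper (the index ranges $2\le i\le m_1$ and $m_1+1\le i\le m-1$ in \eqref{eq:McL-2} do not behave symmetrically under $y\mapsto y^{-1}$). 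The paper's $k\mapsto -3k$ trick sidesteps this entirely and is the device you should use.
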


\begin{proof}
	We know from the quintuple product identity that
	\begin{align*}
		\frac{f\big({-q^{2k}},-q^{\mu-2k}\big)f\big({-q^{\mu}},-q^{2\mu}\big)}{f\big((-1)^{\kappa}q^k,(-1)^{\kappa}q^{\mu-k}\big)}&=f\big((-1)^{\kappa}q^{3k+\mu},(-1)^{\kappa}q^{-3k+2\mu}\big)\\
		&\quad+(-1)^{\kappa+1}q^k f\big((-1)^{\kappa}q^{3k+2\mu},(-1)^{\kappa}q^{-3k+\mu}\big).
	\end{align*}
	Thus, we write
	\begin{align}\label{eq:S}
		\left(\frac{f\big({-q^{2k}},-q^{\mu-2k}\big)f\big({-q^{\mu}},-q^{2\mu}\big)}{f\big((-1)^{\kappa}q^k,(-1)^{\kappa}q^{\mu-k}\big)}\right)^m = S_{I}+S_{I\!I}+S_{I\!I\!I},
	\end{align}
	where
	\begin{align*}
		S_{I}&:=f\big((-1)^{\kappa}q^{3k+\mu},(-1)^{\kappa}q^{-3k+2\mu}\big)^m,\\
		S_{I\!I}&:=(-1)^{(\kappa+1)m}q^{km} f\big((-1)^{\kappa}q^{3k+2\mu},(-1)^{\kappa}q^{-3k+\mu}\big)^m,\\
		S_{I\!I\!I}&:=\sum_{m_1=1}^{m-1}\binom{m}{m_1}(-1)^{(\kappa+1)(m-m_1)}q^{k(m-m_1)}\\
		&\quad\times f\big((-1)^{\kappa}q^{3k+\mu},(-1)^{\kappa}q^{-3k+2\mu}\big)^{m_1} f\big((-1)^{\kappa}q^{3k+2\mu},(-1)^{\kappa}q^{-3k+\mu}\big)^{m-m_1}.
	\end{align*}
	
	By Corollary \ref{coro:power-pairing} with $(k,A',A)\mapsto (3k,\mu,3\mu)$, we have
	\begin{align}\label{eq:S-I}
		S_I=\sum_{s=0}^{m-1}\sQ_{s,I}^{(0)}\sM_{s}^{(m)}(q^{3\mu}),
	\end{align}
	where
	\begin{align*}
		\sQ_{0,I}^{(0)}= f\big((-1)^{\kappa m}q^{3km+\mu m},(-1)^{\kappa m}q^{-3km+2\mu m}\big),
	\end{align*}
	and for $1\le s\le m-1$,
	\begin{align*}
		\sQ_{s,I}^{(0)}&=\frac{1}{2}\Big\{(-1)^{\kappa s}q^{(3k+\mu)s}\\
		&\quad\quad\quad\times f\big((-1)^{\kappa m}q^{3km+\mu(m+3 s)},(-1)^{\kappa m}q^{-3km+\mu(2m-3 s)}\big)\notag\\
		&\quad\quad+(-1)^{\kappa (m-s)}q^{(3k+\mu)(m-s)}\\
		&\quad\quad\quad\times f\big((-1)^{\kappa m}q^{3km+\mu(4m-3 s)},(-1)^{\kappa m}q^{-3km+\mu(-m+3 s)}\big)\Big\}.
	\end{align*}
	
	Next, we have
	\begin{align*}
		S_{I\!I}=(-1)^{(\kappa+1)m}q^{km} f\big((-1)^{\kappa}q^{-3k+\mu},(-1)^{\kappa}q^{3k+2\mu}\big)^m.
	\end{align*}
	Applying Corollary \ref{coro:power-pairing} with $(k,A',A)\mapsto (-3k,\mu,3\mu)$ gives
	\begin{align}\label{eq:S-II}
		S_{I\!I}=\sum_{s=0}^{m-1}\sQ_{s,I\!I}^{(0)}\sM_{s}^{(m)}(q^{3\mu}),
	\end{align}
	where
	\begin{align*}
		\sQ_{0,I\!I}^{(0)}= (-1)^{(\kappa+1)m}q^{km} f\big((-1)^{\kappa m}q^{-3km+\mu m},(-1)^{\kappa m}q^{3km+2\mu m}\big),
	\end{align*}
	and for $1\le s\le m-1$,
	\begin{align*}
		\sQ_{s,I\!I}^{(0)}&=\frac{1}{2}\Big\{(-1)^{(\kappa+1)m+\kappa s}q^{km+(-3k+\mu)s}\\
		&\quad\quad\quad\times f\big((-1)^{\kappa m}q^{-3km+\mu(m+3 s)},(-1)^{\kappa m}q^{3km+\mu(2m-3 s)}\big)\notag\\
		&\quad\quad+(-1)^{(\kappa+1)m+\kappa (m-s)}q^{km+(-3k+\mu)(m-s)}\\
		&\quad\quad\quad\times f\big((-1)^{\kappa m}q^{-3km+\mu(4m-3 s)},(-1)^{\kappa m}q^{3km+\mu(-m+3 s)}\big)\Big\}.
	\end{align*}
	
	For $S_{I\!I\!I}$, we notice that
	\begin{align*}
		2S_{I\!I\!I}&=\sum_{m_1=1}^{m-1}\binom{m}{m_1}(-1)^{(\kappa+1)(m-m_1)}q^{k(m-m_1)}\\
		&\quad\quad\times f\big((-1)^{\kappa}q^{3k+\mu},(-1)^{\kappa}q^{-3k+2\mu}\big)^{m_1} f\big((-1)^{\kappa}q^{3k+2\mu},(-1)^{\kappa}q^{-3k+\mu}\big)^{m-m_1}\\
		&\quad+\sum_{m_1=1}^{m-1}\binom{m}{m-m_1}(-1)^{(\kappa+1)m_1}q^{km_1}\\
		&\quad\quad\times f\big((-1)^{\kappa}q^{3k+\mu},(-1)^{\kappa}q^{-3k+2\mu}\big)^{m-m_1} f\big((-1)^{\kappa}q^{3k+2\mu},(-1)^{\kappa}q^{-3k+\mu}\big)^{m_1}\\
		&=:S_{I\!I\!I_{1}}+S_{I\!I\!I_{2}}.
	\end{align*}
	By Theorem \ref{th:theta-product-2} with $(k,A',A)\mapsto (3k,\mu,3\mu)$, we have
	\begin{align}\label{eq:S-III-1}
		S_{I\!I\!I_{1}} = \sum_{m_1=1}^{m-1}\sum_{s=0}^{m-1}\sQ_{s,I\!I\!I_{1}}^{(m_1)}\sN_{s}^{(m_1,m-m_1)}(q^\mu,q^{3\mu}),
	\end{align}
	where for $0\le s\le m-1$,
	\begin{align*}
		\sQ_{s,I\!I\!I_{1}}^{(m_1)}&=\binom{m}{m_1}(-1)^{(\kappa+1)(m-m_1)+\kappa s}q^{k(m-m_1)+3ks}\\
		&\quad\times f\big((-1)^{\kappa m}q^{3km+\mu(2m-m_1+3 s)},(-1)^{\kappa m}q^{-3km+\mu(m+m_1-3s)}\big).
	\end{align*}
	Also,
	\begin{align*}
		S_{I\!I\!I_{2}} &= \sum_{m_1=1}^{m-1}\binom{m}{m_1}(-1)^{(\kappa+1)m_1}q^{km_1}\\
		&\quad\times f\big((-1)^{\kappa}q^{-3k+\mu},(-1)^{\kappa}q^{3k+2\mu}\big)^{m_1}f\big((-1)^{\kappa}q^{-3k+2\mu},(-1)^{\kappa}q^{3k+\mu}\big)^{m-m_1}.
	\end{align*}
	Applying Theorem \ref{th:theta-product-2} with $(k,A',A)\mapsto (-3k,\mu,3\mu)$ gives
	\begin{align}\label{eq:S-III-2}
		S_{I\!I\!I_{2}} = \sum_{m_1=1}^{m-1}\sum_{s=0}^{m-1}\sQ_{s,I\!I\!I_{2}}^{(m_1)}\sN_{s}^{(m_1,m-m_1)}(q^\mu,q^{3\mu}),
	\end{align}
	where for $0\le s\le m-1$,
	\begin{align*}
		\sQ_{s,I\!I\!I_{2}}^{(m_1)}&=\binom{m}{m_1}(-1)^{(\kappa+1)m_1+\kappa s}q^{km_1-3ks}\\
		&\quad\times f\big((-1)^{\kappa m}q^{-3km+\mu(2m-m_1+3 s)},(-1)^{\kappa m}q^{3km+\mu(m+m_1-3s)}\big).
	\end{align*}
	
	Recalling \eqref{eq:S}, we deduce from \eqref{eq:S-I}, \eqref{eq:S-II}, \eqref{eq:S-III-1} and \eqref{eq:S-III-2} that
	\begin{align*}
		\text{LHS of \eqref{eq:S}}&=S_{I}+S_{I\!I}+\frac{1}{2}\big(S_{I\!I\!I_{1}}+S_{I\!I\!I_{2}}\big)\\
		&=\sum_{s=0}^{m-1}\big(\sQ_{s,I}^{(0)}+\sQ_{s,I\!I}^{(0)}\big) \sM_{s}^{(m)}(q^{3\mu})\\
		&\quad+\frac{1}{2}\sum_{m_1=1}^{m-1}\sum_{s=0}^{m-1} \big(\sQ_{s,I\!I\!I_{1}}^{(m_1)}+\sQ_{s,I\!I\!I_{2}}^{(m_1)}\big)\sN_{s}^{(m_1,m-m_1)}(q^{\mu},q^{3\mu}).
	\end{align*}
	This implies our desired result.
\end{proof}

\begin{corollary}\label{coro:only-quotient-power}
	For any nonnegative integers $m'$ and $\mu$, any integer $k$ such that $\gcd(k,3(2m'+1))=1$, and any $\kappa\in\{0,1\}$,
	\begin{align}
		\mathbf{H}_{3(2m'+1)}\Bigg(q^{-2(2m'+1)k}\bigg(\frac{f\big({-q^{2k}},-q^{3(2m'+1)\mu -2k}\big)}{f\big((-1)^{\kappa}q^k,(-1)^{\kappa}q^{3(2m'+1)\mu -k}\big)}\bigg)^{2m'+1}\Bigg) = 0.
	\end{align}
\end{corollary}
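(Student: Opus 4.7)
The plan is to apply Corollary \ref{coro:f/f-expansion} with its parameter $\mu$ replaced by $\mu_0:=M\mu=3(2m'+1)\mu$ (a multiple of $M$) and its exponent $m$ taken to be $m:=2m'+1$. This writes the $m$-th power of the theta quotient as
\begin{align*}
\frac{1}{f(-q^{\mu_0},-q^{2\mu_0})^m}\Bigg\{\sum_{s=0}^{m-1}\sQ_s^{(0)}\sM_s^{(m)}(q^{3\mu_0})+\sum_{m_1=1}^{m-1}\sum_{s=0}^{m-1}\sQ_s^{(m_1)}\sN_s^{(m_1,m-m_1)}(q^{\mu_0},q^{3\mu_0})\Bigg\}.
\end{align*}
Since $\mu_0\equiv 0\pmod M$, the denominator and every factor $\sM_s^{(m)}$, $\sN_s^{(m_1,m-m_1)}$ is a power series in $q^M$, so they commute with $\mathbf{H}_M$ and may be pulled outside. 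The claim therefore reduces to establishing $\mathbf{H}_M\bigl(q^{-2mk}\sQ_s^{(m_1)}\bigr)=0$ for each admissible $(s,m_1)$.

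The key observation is that every theta function inside the $\sQ$-blocks is itself a power series in $q^M$. Indeed, in its summation expansion the $q$-exponent of term $n$ has the form $3km\cdot n+\mu_0\cdot(\text{integer})$, and both $3km=Mk$ and $\mu_0$ are multiples of $M$. Hence $\mathbf{H}_M$ acts on $c\,q^{\sigma}(\text{theta})$ by preserving the theta and killing the product unless $\sigma\equiv 0\pmod M$. A direct computation of $\sigma$ modulo $3m$ for the four summands of $\sQ_s^{(0)}$ (with $1\le s\le m-1$) and the two summands of $\sQ_s^{(m_1)}$ (with $1\le m_1\le m-1$), combined with $\gcd(k,3m)=1$, yields the survival conditions: the first pair of summands of $\sQ_s^{(0)}$ requires $3s\equiv 2m\pmod{3m}$, the second pair requires $3s\equiv m\pmod{3m}$, and the two summands of $\sQ_s^{(m_1)}$ both require $3s=m+m_1$. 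Moreover, $\sQ_0^{(0)}$ is killed unconditionally, and when $3\nmid m$ the $\sQ_s^{(0)}$ conditions admit no solution in $0\le s\le m-1$.

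It remains to show that each surviving pair (which always occurs as a pair in $\sQ_s^{(m_1)}$, and in $\sQ_s^{(0)}$ precisely when $3\mid m$) cancels. For this I apply the Ramanujan identity $f(a,b)=a\,f(a^{-1},a^2 b)$ (obtained from the index shift $n\mapsto n-1$) together with the symmetry $f(a,b)=f(b,a)$. Under the survival condition the two partner theta functions have arguments $\bigl((-1)^{\kappa m}q^{3km+3m\mu_0},(-1)^{\kappa m}q^{-3km}\bigr)$ and $\bigl((-1)^{\kappa m}q^{3km},(-1)^{\kappa m}q^{-3km+3m\mu_0}\bigr)$, and the identity relates the latter to the former by the factor $(-1)^{\kappa m}q^{3km}$. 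The power $q^{3km}=q^{Mk}$ absorbs the $q^{-Mk}$ discrepancy between the leading exponents of the two partners, and a short parity check—using decisively that $m=2m'+1$ is odd—shows that the remaining signs are opposite. Hence each pair cancels and the corollary follows. The main obstacle is precisely this cancellation in the case $3\mid m$, since the congruence argument alone does not rule out the $\sQ_s^{(0)}$ terms at $s=m/3$ and $s=2m/3$.
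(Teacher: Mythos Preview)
Your proposal is correct and follows essentially the same approach as the paper: expand via Corollary~\ref{coro:f/f-expansion}, observe that all $\sM$-, $\sN$-, and theta-factors are series in $q^M$ (since $3km=Mk$ and $\mu_0$ are multiples of $M$), reduce to the congruence analysis of the $q$-prefactors, and then use the index-shift identity $f(a,b)=a\,f(a^{-1},a^2b)$ to cancel the surviving pairs. The paper packages this last step as showing that the specific combination
\[
\sA=f\big((-1)^{\kappa}q^{Mk+M^2\mu},(-1)^{\kappa}q^{-Mk}\big)+(-1)^{\kappa+1}q^{-Mk}f\big((-1)^{\kappa}q^{Mk},(-1)^{\kappa}q^{-Mk+M^2\mu}\big)
\]
vanishes identically (via $t\mapsto 1-t$), which is exactly your Ramanujan identity applied with the same parity observation that $m$ is odd.
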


\begin{proof}
	For notational convenience, we write $m=2m'+1$ and keep in mind that $m$ is odd. We also write $M=3m=3(2m'+1)$. By Corollary \ref{coro:f/f-expansion} with certain common factors extracted and powers of $(-1)$ modified, we have
	\begin{align*}
		\left(\frac{f\big({-q^{2k}},-q^{M\mu-2k}\big)}{f\big((-1)^{\kappa} q^k,(-1)^{\kappa}q^{M\mu-k}\big)}\right)^m
		&=\sA_0^{(0)}\sF_0^{(0)}+\sum_{s=1}^{m-1}\sA_{s,1}^{(0)}
		\sF_{s,1}^{(0)}+\sum_{s=1}^{m-1}\sA_{s,2}^{(0)}\sF_{s,2}^{(0)}\\
		&\quad+\sum_{m_1=1}^{m-1}\sum_{s=0}^{m-1}\sA_s^{(m_1)}\sF_s^{(m_1)},
	\end{align*}
	where each $\sF_{\star}^{(\star)}$ is a series in $q^M$, and
	\begin{align*}
		\sA_{0}^{(0)}&=f\big((-1)^{\kappa}q^{Mk+M\mu m},(-1)^{\kappa}q^{-Mk+2M\mu m}\big)\notag\\
		& +(-1)^{\kappa+1}q^{km} f\big((-1)^{\kappa}q^{Mk+2M\mu m},(-1)^{\kappa}q^{-Mk+M\mu m}\big),
	\end{align*}
	and for $1\le s\le m-1$,
	\begin{align*}
		\sA_{s,1}^{(0)}&=q^{3ks} f\big((-1)^{\kappa}q^{Mk+M\mu(m+3 s)},(-1)^{\kappa}q^{-Mk+M\mu(2m-3 s)}\big)\notag\\
		&+(-1)^{\kappa+1}q^{km-3ks} f\big((-1)^{\kappa}q^{Mk+M\mu(2m-3 s)},(-1)^{\kappa}q^{-Mk+M\mu(m+3 s)}\big),\notag\\
		\sA_{s,2}^{(0)}&=(-1)^{\kappa+1}q^{km-3k(m-s)} f\big((-1)^{\kappa}q^{Mk+M\mu(-m+3 s)},(-1)^{\kappa}q^{-Mk+M\mu(4m-3 s)}\big)\notag\\
		&+q^{3k(m-s)} f\big((-1)^{\kappa}q^{Mk+M\mu(4m-3 s)},(-1)^{\kappa}q^{-Mk+M\mu(-m+3 s)}\big),
	\end{align*}
	and for $1\le m_1\le m-1$ and $0\le s\le m-1$,
	\begin{align*}
		\sA_{s}^{(m_1)}&=q^{k(m-m_1)+3ks} f\big((-1)^{\kappa}q^{Mk+M\mu(2m-m_1+3 s)},(-1)^{\kappa}q^{-Mk+M\mu(m+m_1-3s)}\big)\notag\\
		&+(-1)^{\kappa+1}q^{km_1-3ks} f\big((-1)^{\kappa}q^{Mk+M\mu(m+m_1-3s)},(-1)^{\kappa}q^{-Mk+M\mu(2m-m_1+3 s)}\big).
	\end{align*}
	To show our desired result, it suffices to verify that for each choice of $\sA_{\star}^{(\star)}$,
	\begin{align*}
		\mathbf{H}_M\big(q^{-2km}\sA_{\star}^{(\star)}\big) = 0.
	\end{align*}
	
	We first notice that
	\begin{align*}
		q^{-2km}\sA_{0}^{(0)}&=q^{-2km}f\big((-1)^{\kappa}q^{Mk+M\mu m},(-1)^{\kappa}q^{-Mk+2M\mu m}\big)\notag\\
		&\quad +(-1)^{\kappa+1}q^{-km} f\big((-1)^{\kappa}q^{Mk+2M\mu m},(-1)^{\kappa}q^{-Mk+M\mu m}\big).
	\end{align*}
	The two theta functions in the above are series in $q^M$. Also, since $\gcd(k,M)=1$, we have
	\begin{align*}
		\left\{
		\begin{aligned}
			-2km&\not\equiv 0 \pmod{M},\\
			-km&\not\equiv 0 \pmod{M}.
		\end{aligned}
		\right.
	\end{align*}
	Thus,
	\begin{align*}
		\mathbf{H}_M\big(q^{-2km}\sA_{0}^{(0)}\big) = 0+0 =0.
	\end{align*}
	
	For $\sA_{s,1}^{(0)}$ and $\sA_{s,2}^{(0)}$ with $1\le s\le m-1$, and $\sA_{s}^{(m_1)}$ with $1\le m_1\le m-1$ and $0\le s\le m-1$, we first prove an auxiliary result, namely, if
	\begin{align*}
		\sA&=f\big((-1)^{\kappa}q^{Mk+M^2\mu},(-1)^{\kappa}q^{-Mk}\big)\notag\\
		&\quad+(-1)^{\kappa+1}q^{-Mk} f\big((-1)^{\kappa}q^{Mk},(-1)^{\kappa}q^{-Mk+M^2\mu}\big),
	\end{align*}
	then
	\begin{align*}
		\sA=0.
	\end{align*}
	To show this, we observe that
	\begin{align*}
		\sA&=\sum_{s\in\mathbb{Z}}(-1)^{\kappa s}q^{M^2\mu\binom{s}{2}-Mks}+(-1)^{\kappa+1}\sum_{t\in\mathbb{Z}}(-1)^{\kappa t}q^{M^2\mu\binom{t}{2}+Mkt-Mk}\\
		&=\sum_{s\in\mathbb{Z}}(-1)^{\kappa s}q^{M^2\mu\binom{s}{2}-Mks}+(-1)^{\kappa+1}\sum_{t\in\mathbb{Z}}(-1)^{\kappa (1-t)}q^{M^2\mu\binom{1-t}{2}+Mk(1-t)-Mk}\\
		&=\sum_{s\in\mathbb{Z}}(-1)^{\kappa s}q^{M^2\mu\binom{s}{2}-Mks}-\sum_{t\in\mathbb{Z}}(-1)^{\kappa t}q^{M^2\mu\binom{t}{2}-Mkt}\\
		&=0.
	\end{align*}
	
	Now we notice that
	\begin{align*}
		q^{-2km}\sA_{s,1}^{(0)}&=q^{-2km+3ks} f\big((-1)^{\kappa}q^{Mk+M\mu(m+3 s)},(-1)^{\kappa}q^{-Mk+M\mu(2m-3 s)}\big)\notag\\
		&+(-1)^{\kappa+1}q^{-km-3ks} f\big((-1)^{\kappa}q^{Mk+M\mu(2m-3 s)},(-1)^{\kappa}q^{-Mk+M\mu(m+3 s)}\big).
	\end{align*}
	Here the two theta functions are series in $q^M$. Since $1\le s\le m-1$, we find that $-2km+3ks \equiv 0 \pmod{M}$ only if $3s=2m$ provided that $m$ is a multiple of $3$. Similarly, $-km-3ks\equiv 0 \pmod{M}$ only if $3s=2m$. Thus, when $3s\ne 2m$, we have
	\begin{align*}
		\mathbf{H}_M\big(q^{-2km}\sA_{s,1}^{(0)}\big) = 0+0 =0.
	\end{align*}
	When $3s=2m$, we have
	\begin{align*}
		q^{-2km}\sA_{s,1}^{(0)}=\sA=0,
	\end{align*}
	which implies that
	\begin{align*}
		\mathbf{H}_M\big(q^{-2km}\sA_{s,1}^{(0)}\big) =0.
	\end{align*}
	Also, for $\sA_{s,2}^{(0)}$, we only need to consider the two cases $3s\ne m$ and $3s=m$ by a similar argument.
	
	Finally, for $\sA_{s}^{(m_1)}$ with $1\le m_1\le m-1$ and $0\le s\le m-1$, we have
	\begin{align*}
		&q^{-2km}\sA_{s}^{(m_1)}\\
		&\!=q^{-km-km_1+3ks} f\big((-1)^{\kappa}q^{Mk+M\mu(2m-m_1+3 s)},(-1)^{\kappa}q^{-Mk+M\mu(m+m_1-3s)}\big)\notag\\
		&\!+(-1)^{\kappa+1}q^{-2km+km_1-3ks} f\big((-1)^{\kappa}q^{Mk+M\mu(m+m_1-3s)},(-1)^{\kappa}q^{-Mk+M\mu(2m-m_1+3 s)}\big).
	\end{align*}
	We still observe that the two theta functions are series in $q^M$. Since $1\le m_1\le m-1$ and $0\le s\le m-1$, we have $-m<3s-m_1<3m$. Then $-km-km_1+3ks\equiv 0 \pmod{M}$ only if $3s-m_1=m$. Also, $-2km+km_1-3ks\equiv 0 \pmod{M}$ only if $3s-m_1=m$. Therefore, when $3s-m_1\ne m$,
	\begin{align*}
		\mathbf{H}_M\big(q^{-2km}\sA_{s}^{(m_1)}\big) = 0+0 =0.
	\end{align*}
	When $3s-m_1= m$, we have
	\begin{align*}
		q^{-2km}\sA_{s}^{(m_1)}=\sA=0,
	\end{align*}
	which implies that
	\begin{align*}
		\mathbf{H}_M\big(q^{-2km}\sA_{s}^{(m_1)}\big) =0.
	\end{align*}
	
	The above discussion therefore completes our proof.
\end{proof}

\section{Sublattices of $\mathbb{Z}^2$ and the solution set of linear congruences}\label{sec:linear-cong}

\subsection{Homogeneous linear congruences}

We start by constructing a sublattice of $\mathbb{Z}^2$ to represent the solution set of a family of homogeneous linear congruences.

\begin{lemma}\label{le:w=0}
	Let $M$ be a positive integer. Let $u$ and $v$ be integers and write $d^*=\gcd(u,v,M)$. Let $d_u=\gcd(u,M)$ and $d_v=\gcd(v,M)$. Also, let $u'=u/d_u$ and $v'=v/d_v$. Assume that integers $a_1$, $b_1$, $a_2$ and $b_2$ with $\gcd(a_1,b_1)=1$ and $\gcd(a_2,b_2)=1$ are such that
	\begin{align}
		u'a_1+v'a_2&\equiv 0 \pmod{d^*M/(d_ud_v)},\label{eq:cond_a}\\
		u'b_1+v'b_2&\equiv 0 \pmod{d^*M/(d_ud_v)},\label{eq:cond_b}
	\end{align}
	and
	\begin{align}\label{eq:cond_det}
		a_1b_2-a_2b_1=\pm d^*M/(d_ud_v).
	\end{align}
	Then
	\begin{align}
		&\{(m,n)\in\mathbb{Z}^2:um+vn\equiv 0 \ (\operatorname{mod}\,M) \}\notag\\
		&\qquad\qquad=\big\{\big((d_v/d^*)(a_1s+b_1t),(d_u/d^*)(a_2s+b_2t)\big):(s,t)\in\mathbb{Z}^2\big\}.
	\end{align}
	In particular, if we write $d=\gcd(u,v)$, then the linear Diophantine equation
	\begin{align}\label{eq:u-v-M}
		ud_vx+vd_uy= dM
	\end{align}
	is solvable. Let $(x,y)=(\alpha,\beta)$ be a specific solution. Then the integers $a_1$, $b_1$, $a_2$ and $b_2$ can be chosen as
	\begin{align}\label{eq:sol-ab}
		\left\{
		\begin{aligned}
			a_1&=\alpha,\\
			b_1&= -vd^*/(dd_v),\\
			a_2&=\beta,\\
			b_2&=ud^*/(dd_u).
		\end{aligned}
		\right.
	\end{align}
\end{lemma}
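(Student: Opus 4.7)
The plan is to first establish the set equality under the hypotheses on $a_1, b_1, a_2, b_2$, and then to verify that the explicit construction \eqref{eq:sol-ab} satisfies all of them. For the forward inclusion, substituting $(m,n) = \bigl((d_v/d^*)(a_1 s + b_1 t),\ (d_u/d^*)(a_2 s + b_2 t)\bigr)$ into $um+vn$ and using $u = u'd_u$, $v = v'd_v$ yields
\[
um + vn = \frac{d_u d_v}{d^*}\bigl[(u'a_1+v'a_2)s + (u'b_1+v'b_2)t\bigr],
\]
which is divisible by $(d_u d_v/d^*)\cdot(d^*M/(d_u d_v)) = M$ by \eqref{eq:cond_a} and \eqref{eq:cond_b}. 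For the reverse inclusion I would argue by index counting: the map $(m,n) \mapsto um+vn \pmod M$ has image the subgroup generated by $u,v \pmod M$, which is $d^*\mathbb{Z}/M\mathbb{Z}$ of order $M/d^*$, so its kernel is a sublattice of $\mathbb{Z}^2$ of index $M/d^*$. On the other side, the parameterized sublattice has basis matrix $\operatorname{diag}(d_v/d^*, d_u/d^*)\bigl(\begin{smallmatrix}a_1 & b_1\\ a_2 & b_2\end{smallmatrix}\bigr)$, whose absolute determinant equals $(d_u d_v/d^{*2})(d^*M/(d_u d_v)) = M/d^*$ by \eqref{eq:cond_det}. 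Containment combined with equal finite indices forces equality.

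For the ``in particular'' part, I would first note the identity $\gcd(d_u, d_v) = d^*$ (checked prime by prime), from which short valuation calculations give $\gcd(ud_v, vd_u) \mid dM$, so that \eqref{eq:u-v-M} is solvable, and also the integrality of $b_1 = -vd^*/(dd_v)$ and $b_2 = ud^*/(dd_u)$. Given any solution $(\alpha, \beta)$ of \eqref{eq:u-v-M}, dividing by $d_u d_v$ yields
\[
u'\alpha + v'\beta = \frac{dM}{d_u d_v} = \frac{d}{d^*}\cdot\frac{d^*M}{d_u d_v},
\]
verifying \eqref{eq:cond_a}; the identity $u'b_1 + v'b_2 = -u'v d^*/(dd_v) + v'u d^*/(dd_u) = 0$ is immediate and verifies \eqref{eq:cond_b}; and direct substitution produces $a_1 b_2 - a_2 b_1 = d^*(ud_v\alpha + vd_u\beta)/(d\,d_u d_v) = d^*M/(d_u d_v)$, confirming \eqref{eq:cond_det}.

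The main obstacle will be showing the primitivity conditions $\gcd(a_1,b_1) = \gcd(a_2,b_2) = 1$, which I expect to handle by a prime-by-prime valuation analysis. For any prime $p$, write $a = v_p(u)$, $b = v_p(v)$, $e = v_p(M)$; a case-check on the orderings of $(a,b,e)$ shows that $p \mid b_1$ occurs precisely when $a < b$ and $b > e$. Under these inequalities, the further assumption $p \mid \alpha$ forces
\[
v_p(ud_v\alpha) \ge a + \min(b,e) + 1 = a + e + 1 > a + e = v_p(dM),
\]
while $v_p(vd_u\beta) \ge b + \min(a,e) > a + e = v_p(dM)$ as well (using $b > e$ if $a \le e$, or $b > a$ if $a > e$). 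Both sides of \eqref{eq:u-v-M} thus have mismatched $p$-adic valuations, a contradiction. Therefore $\gcd(\alpha, b_1) = 1$, and $\gcd(\beta, b_2) = 1$ follows by a symmetric argument.
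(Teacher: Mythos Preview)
Your proof is correct, but it differs from the paper's in two places.

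For the inclusion $\sL\subseteq\sR$, the paper does not count indices. Instead it first reduces to the case $\gcd(u,v,M)=1$ by dividing through by $d^*$, then argues element-wise: for each fixed $m'$ the admissible $n'$ form a single residue class modulo $d^*M/(d_ud_v)$, and (using $\gcd(a_1,b_1)=1$) the values $a_2s+b_2t$ with $a_1s+b_1t=m'$ form an arithmetic progression of that same common difference which meets the correct class. Your lattice-index argument is shorter and more conceptual; the paper's argument is more hands-on and actually uses the coprimality hypotheses $\gcd(a_1,b_1)=1$ at this stage, whereas in your approach those hypotheses play no role in the set equality itself.

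For the coprimality verification $\gcd(\alpha,b_1)=1$ in the explicit choice \eqref{eq:sol-ab}, the paper avoids valuations entirely: it observes that $v'$ is coprime to $d^*M/(d_ud_v)$ and that $b_1=-v'd^*/d$ divides $v'$, so any common prime factor $\ell$ of $\alpha$ and $b_1$ would, via the relation $(u'd^*/d)\alpha+(v'd^*/d)\beta=d^*M/(d_ud_v)$, also divide $d^*M/(d_ud_v)$, a contradiction. This is cleaner than your prime-by-prime case analysis, though your argument is equally valid.
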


\begin{proof}
	For convenience, we write
	\begin{align*}
		\sL:=\{(m,n)\in\mathbb{Z}^2:um+vn\equiv 0 \ (\operatorname{mod}\,M) \},
	\end{align*}
	and
	\begin{align*}
		\sR:=\big\{\big((d_v/d^*)(a_1s+b_1t),(d_u/d^*)(a_2s+b_2t)\big):(s,t)\in\mathbb{Z}^2\big\}.
	\end{align*}
	We start by noticing that
	\begin{align*}
		\sL=\{(m,n)\in\mathbb{Z}^2:u^{\divideontimes}m+v^{\divideontimes}n\equiv 0 \ (\operatorname{mod}\,M^{\divideontimes}) \},
	\end{align*}
	where $u^{\divideontimes}=u/d^*$, $v^{\divideontimes}=v/d^*$ and $M^{\divideontimes}=M/d^*$. Notice also that $\gcd(u^{\divideontimes},v^{\divideontimes},M^{\divideontimes})=1$. Let us write
	\begin{align*}
		d_u^{\divideontimes}&=\gcd(u^{\divideontimes},M^{\divideontimes})=\frac{d_u}{d^*},\\
		d_v^{\divideontimes}&=\gcd(v^{\divideontimes},M^{\divideontimes})=\frac{d_v}{d^*}.
	\end{align*}
	Then, $\gcd(u^{\divideontimes},d_v^{\divideontimes})=1$ and $\gcd(v^{\divideontimes},d_u^{\divideontimes})=1$ (and so $\gcd(d_u^{\divideontimes},d_v^{\divideontimes})=1$) since $\gcd(u^{\divideontimes},v^{\divideontimes},M^{\divideontimes})=1$. This also implies that
	$$\frac{d^*M}{d_ud_v}=\frac{M^{\divideontimes}}{d_u^{\divideontimes}d_v^{\divideontimes}}$$
	is an integer. Further, we notice that
	\begin{align*}
		\frac{u^{\divideontimes}}{d_u^{\divideontimes}}&=\frac{u}{d_u}=u',\\
		\frac{v^{\divideontimes}}{d_v^{\divideontimes}}&=\frac{v}{d_v}=v'.
	\end{align*}
	Also, since $d_u^{\divideontimes}=\gcd(u^{\divideontimes},M^{\divideontimes})$ and $d_v^{\divideontimes}=\gcd(v^{\divideontimes},M^{\divideontimes})$, we have
	$$\gcd(u',d^*M/(d_ud_v))=\gcd(u^{\divideontimes}/d_u^{\divideontimes},M^{\divideontimes}/(d_u^{\divideontimes}d_v^{\divideontimes}))=1,$$
	and
	$$\gcd(v',d^*M/(d_ud_v))=\gcd(v^{\divideontimes}/d_v^{\divideontimes},M^{\divideontimes}/(d_u^{\divideontimes}d_v^{\divideontimes}))=1.$$
	
	It can be seen that for any solution $(m,n)$ to
	\begin{align*}
		u^{\divideontimes}m+v^{\divideontimes}n\equiv 0 \pmod{M^{\divideontimes}},
	\end{align*}
	we have $d_v^{\divideontimes}\mid m$ and $d_u^{\divideontimes}\mid n$. We shall write $m'=m/d_v^{\divideontimes}$ and $n'=n/d_u^{\divideontimes}$. Then by recalling that $u'=u^{\divideontimes}/d_u^{\divideontimes}$, $v'=v^{\divideontimes}/d_v^{\divideontimes}$ and $d^*M/(d_ud_v)=M^{\divideontimes}/(d_u^{\divideontimes}d_v^{\divideontimes})$, we have
	\begin{align*}
		\sL = \{(d_v^{\divideontimes}m',d_u^{\divideontimes}n')\in\mathbb{Z}^2:u'm'+v'n'\equiv 0 \ (\operatorname{mod}\,d^*M/(d_ud_v)) \}.
	\end{align*}
	Now it suffices to show that
	\begin{align*}
		&\sL':=\{(m',n')\in\mathbb{Z}^2:u'm'+v'n'\equiv 0 \ (\operatorname{mod}\,d^*M/(d_ud_v)) \}\\
		&\qquad\qquad\qquad\qquad\qquad = \{(a_1s+b_1t,a_2s+b_2t):(s,t)\in\mathbb{Z}^2\}=:\sR'.
	\end{align*}
	
	Let us fix $m'$. Since
	\begin{align*}
		u'm'+v'n'\equiv 0 \pmod{d^*M/(d_ud_v)},
	\end{align*}
	we find that all $n'$ such that $(m',n')\in\sL'$ form a bilateral arithmetic progression of common difference $\pm d^*M/(d_ud_v)$. Namely,
	$$n'\equiv -u'\overline{v'}m' \pmod{d^*M/(d_ud_v)},$$
	where $\overline{v'}$ is such that $v'\overline{v'}\equiv 1 \pmod{d^*M/(d_ud_v)}$; this is doable since $v'$ is coprime to $d^*M/(d_ud_v)$ as shown in the previous discussion. Now we consider the linear Diophantine equation
	\begin{align*}
		a_1s+b_1t = m'.
	\end{align*}
	Since $\gcd(a_1,b_1)=1$, it is always solvable and has infinitely many solutions. Let $(s_0,t_0)$ be a specific solution. Then the general solutions $(s,t)$ satisfy
	\begin{align}\label{eq:Dio-eqn-m}
		a_1(s-s_0)+b_1(t-t_0)=0.
	\end{align}
	Recalling that $\gcd(a_1,b_1)=1$, we have $s-s_0=kb_1$ and $t-t_0=-ka_1$ for any $k\in\mathbb{Z}$. Thus, the general solutions $(s,t)$ to \eqref{eq:Dio-eqn-m} can be parameterized as
	\begin{align*}
		\left\{
		\begin{aligned}
			s&=s_0+kb_1\\
			t&=t_0-ka_1
		\end{aligned}
		\right.	
		\qquad\text{(with $k\in\mathbb{Z}$)}.
	\end{align*}
	We then observe that the integers
	\begin{align*}
		a_2s+b_2t &= a_2(s_0+kb_1)+b_2(t_0-ka_1)\\
		&= a_2s_0 + b_2t_0 +k(a_2b_1-a_1b_2)
	\end{align*}
	also form a bilateral arithmetic progression of common difference $\pm d^*M/(d_ud_v)$ since $a_1b_2-a_2b_1=\pm d^*M/(d_ud_v)$ according to \eqref{eq:cond_det}. To show that the list $\{n'\}$ is identical to the list $\{a_2s+b_2t\}$, it suffices to examine that one element in $\{a_2s+b_2t\}$ is also in $\{n'\}$. Evidently, $a_2s_0 + b_2t_0$ is such an element. This is because $a_1s_0+b_1t_0 = m'$ by our assumption. Then
	\begin{align*}
		u'm'+v'(a_2s_0 + b_2t_0)&=u'(a_1s_0+b_1t_0)+v'(a_2s_0 + b_2t_0)\\
		&=(u'a_1+v'a_2)s_0+(u'b_1+v'b_2)t_0\\
		&\equiv 0+0=0 \pmod{d^*M/(d_ud_v)}.
	\end{align*}
	Therefore, ranging $m'$ over $\mathbb{Z}$ gives $\sR'= \sL'$, which further yields $\sR=\sL$.
	
	Finally, we notice that
	\begin{align*}
		\gcd(u',v')=\frac{d}{d^*}.
	\end{align*}
	Recalling that $d^*M/(d_ud_v)$ is an integer, we conclude that \eqref{eq:u-v-M}, which can be rewritten as
	\begin{align*}
		u'x+v'y=\frac{d}{d^*}\cdot \frac{d^*M}{d_ud_v},
	\end{align*}
	is solvable by B\'ezout's lemma. If $a_1$, $b_1$, $a_2$ and $b_2$ are chosen as in \eqref{eq:sol-ab}, we first notice that
	\begin{align*}
		b_1=-\frac{vd^*}{dd_v}=-\frac{v\cdot \gcd(u,v,M)}{\gcd(u,v)\gcd(v,M)}
	\end{align*} 
	and
	\begin{align*}
		b_2=\frac{ud^*}{dd_u}=\frac{u\cdot \gcd(u,v,M)}{\gcd(u,v)\gcd(u,M)}
	\end{align*} 
	are integers. Now we examine the following conditions by recalling that $u'\alpha+v'\beta=dM/(d_ud_v)$:
	\begin{itemize}[leftmargin=*,align=left,itemsep=5pt]
		\item[\textit{Eq}.~\eqref{eq:cond_a}:] 
		\begin{align*}
			u'a_1+v'a_2=u'\alpha+v'\beta=dM/(d_ud_v)\equiv 0 \pmod{d^*M/(d_ud_v)};
		\end{align*}
		
		\item[\textit{Eq}.~\eqref{eq:cond_b}:] 
		\begin{align*}
			u'b_1+v'b_2=u'(-v'd^*/d)+v'(u'd^*/d)=0\equiv 0 \pmod{d^*M/(d_ud_v)};
		\end{align*}
		
		\item[\textit{Eq}.~\eqref{eq:cond_det}:] 
		\begin{align*}
			a_1b_2-a_2b_1 = \alpha (u'd^*/d) - \beta (-v'd^*/d) = d^*M/(d_ud_v).
		\end{align*}
	\end{itemize}
	Further, if $\gcd(\alpha,v'd^*/d)=\ell>1$, then $d^*M/(d_ud_v)$ is also divisible by $\ell$ since $(u'd^*/d)\alpha+(v'd^*/d)\beta=d^*M/(d_ud_v)$. Thus, $\ell>1$ is a common divisor of $v'd^*/d$ and $d^*M/(d_ud_v)$; this violates the fact that $v'$ (which is a multiple of $v'd^*/d$ as $d^*\mid d$) is coprime to $d^*M/(d_ud_v)$. Thus, $\gcd(\alpha,v'd^*/d)=1$. Similarly, we have $\gcd(\beta,u'd^*/d)=1$.
\end{proof}

\subsection{Inhomogeneous linear congruences}

Theorem \ref{le:w=0} allows us to analyze the inhomogeneous case.

\begin{theorem}\label{th:w}
	Let $M$ be a positive integer. Let $u$, $v$ and $w$ be integers and write $d^*=\gcd(u,v,M)$.
	\begin{enumerate}[label=\textup{(\roman*)}, widest=ii, itemindent=*, leftmargin=*]
		
		\item 
		Assume that $d^*\nmid w$. Then
		\begin{align}\label{eq:sublattice-general-0}
			\{(m,n)\in\mathbb{Z}^2:um+vn+w\equiv 0 \ (\operatorname{mod}\,M) \}=\varnothing.
		\end{align}
		
		\item 
		Assume that $d^*\mid w$. Let $d_u=\gcd(u,M)$ and $d_v=\gcd(v,M)$. Also, let $u'=u/d_u$ and $v'=v/d_v$. Assume that integers $a_1$, $b_1$, $a_2$ and $b_2$ with $\gcd(a_1,b_1)=1$ and $\gcd(a_2,b_2)=1$ are such that
		\begin{align}
			u'a_1+v'a_2&\equiv 0 \pmod{d^*M/(d_ud_v)},\label{eq:cond_a-w}\\
			u'b_1+v'b_2&\equiv 0 \pmod{d^*M/(d_ud_v)},\label{eq:cond_b-w}
		\end{align}
		and
		\begin{align}\label{eq:cond_det-w}
			a_1b_2-a_2b_1=\pm d^*M/(d_ud_v).
		\end{align}
		Let $(m_0,n_0)$ be a pair of integers such that
		\begin{align}
			um_0+vn_0+w\equiv 0 \pmod{M}.
		\end{align}
		Then
		\begin{align}\label{eq:sublattice-general}
			&\{(m,n)\in\mathbb{Z}^2:um+vn+w\equiv 0 \ (\operatorname{mod}\,M) \}\notag\\
			&\qquad=\big\{\big((d_v/d^*)(a_1s+b_1t)+m_0,(d_u/d^*)(a_2s+b_2t)+n_0\big):(s,t)\in\mathbb{Z}^2\big\}.
		\end{align}
		In particular, we write $d=\gcd(u,v)$ and let $(x,y)=(\alpha,\beta)$ be a specific solution of the linear Diophantine equation
		\begin{align}\label{eq:u-v-M-w}
			ud_vx+vd_uy= dM.
		\end{align}
		Then
		\begin{align}\label{eq:sublattice-particular}
			&\{(m,n)\in\mathbb{Z}^2:um+vn+w\equiv 0 \ (\operatorname{mod}\,M) \}\notag\\
			&\qquad=\{(\alpha sd_v/d^*-vt/d+m_0,\beta s d_u/d^*+ut/d+n_0):(s,t)\in\mathbb{Z}^2\}.
		\end{align}
		
	\end{enumerate}
\end{theorem}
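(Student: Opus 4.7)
The plan is to treat Theorem \ref{th:w} as a straightforward inhomogeneous extension of Lemma \ref{le:w=0}, using the standard principle that the solution set of an inhomogeneous linear congruence is a translate of the solution set of the associated homogeneous one. Part (i) should be immediate: if $(m,n)$ satisfied $um+vn+w\equiv0\pmod M$, then since $d^*=\gcd(u,v,M)$ divides each of $um$, $vn$, and $M$, it would necessarily divide $w$, contradicting $d^*\nmid w$. So the solution set is empty, giving \eqref{eq:sublattice-general-0}.

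For part (ii), the first task is to exhibit the particular solution $(m_0,n_0)$ whose existence is asserted in the hypothesis. Since $d^*=\gcd(u,v,M)$, B\'ezout's lemma tells us that the image of the map $(m,n)\mapsto um+vn \pmod M$ is precisely the subgroup $d^*\mathbb{Z}/M\mathbb{Z}\subseteq \mathbb{Z}/M\mathbb{Z}$. Under the hypothesis $d^*\mid w$, the residue $-w \bmod M$ lies in this subgroup, so a pair $(m_0,n_0)$ with $um_0+vn_0+w\equiv 0\pmod M$ exists. (In the theorem statement this pair is taken as a given input, but for completeness I would note the existence argument here.)

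Next, I would reduce the inhomogeneous case to the homogeneous one already handled. Given any $(m,n)$ in the inhomogeneous solution set, set $m^{\sharp}=m-m_0$ and $n^{\sharp}=n-n_0$; subtracting the congruence for $(m_0,n_0)$ from the one for $(m,n)$ shows $um^{\sharp}+vn^{\sharp}\equiv 0\pmod M$. Conversely, any $(m^{\sharp},n^{\sharp})$ in the homogeneous solution set yields an element of the inhomogeneous solution set upon adding $(m_0,n_0)$. Hence the two solution sets are related by a translation by $(m_0,n_0)$. Applying Lemma \ref{le:w=0} to parametrize the homogeneous set — the hypotheses \eqref{eq:cond_a-w}, \eqref{eq:cond_b-w}, \eqref{eq:cond_det-w} on $a_1,b_1,a_2,b_2$ are identical to those of Lemma \ref{le:w=0} — gives \eqref{eq:sublattice-general} directly.

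Finally, for the explicit form \eqref{eq:sublattice-particular}, I would simply substitute the choice of $a_1,b_1,a_2,b_2$ from \eqref{eq:sol-ab} into \eqref{eq:sublattice-general}. A quick computation gives
\[
(d_v/d^*)(a_1 s+b_1 t) = (d_v/d^*)\bigl(\alpha s - (vd^*/(dd_v))t\bigr) = \alpha s d_v/d^* - vt/d,
\]
and similarly $(d_u/d^*)(a_2 s+b_2 t) = \beta s d_u/d^* + ut/d$, which after adding $(m_0,n_0)$ matches \eqref{eq:sublattice-particular} exactly. There is no real obstacle here — all the combinatorial and number-theoretic weight of the argument has already been absorbed into Lemma \ref{le:w=0}; Theorem \ref{th:w} is its routine inhomogeneous corollary together with the explicit specialization of the B\'ezout parameters, and the only item worth a brief sentence is verifying that $d^*\mid w$ guarantees the existence of $(m_0,n_0)$.
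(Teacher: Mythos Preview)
Your proposal is correct and follows essentially the same approach as the paper: both reduce part (ii) to Lemma \ref{le:w=0} via the translation $(m,n)\mapsto(m-m_0,n-n_0)$ and handle part (i) by the B\'ezout/divisibility obstruction. Your treatment is slightly more detailed in that you explicitly verify the existence of $(m_0,n_0)$ and carry out the substitution for \eqref{eq:sublattice-particular}, but there is no substantive difference in method.
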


\begin{proof}
	For the first part, we notice that to find a solution $(m,n)$ to $um+vn+w\equiv 0 \pmod{M}$, it suffices to find integers $m$, $n$ and $k$ such that $um+vn+Mk=-w$. By B\'ezout's lemma, this linear Diophantine equation is solvable only if $w$ is a multiple of $d^*=\gcd(u,v,M)$.
	
	Recall that for $d^*\mid w$, $(m_0,n_0)$ is a pair of integers such that $um_0+vn_0+w\equiv 0 \pmod{M}$. Considering
	\begin{align*}
		\{(\tilde{m},\tilde{n})\in\mathbb{Z}^2:u\tilde{m}+v\tilde{n}\equiv 0 \ (\operatorname{mod}\,M) \}
	\end{align*}
	and
	\begin{align*}
		\{(m,n)\in\mathbb{Z}^2:um+vn+w\equiv 0 \ (\operatorname{mod}\,M) \},
	\end{align*}
	there is a trivial correspondence between $(\tilde{m},\tilde{n})$ and $(m,n)$ given by
	\begin{align*}
		(m,n) = (\tilde{m}+m_0,\tilde{n}+n_0).
	\end{align*}
	Recalling Lemma \ref{le:w=0} gives the desired result.
\end{proof}

\section{Dissecting theta products}\label{sec:dissecting}

\subsection{Huffing operator}

Recall from \eqref{eq:U-operator} that the $\mathbf{H}$-operator is defined for $G(q)=\sum_{n}g_nq^n$ by
\begin{align*}
	\mathbf{H}_M\big(G(q)\big)=\sum_{n} g_{Mn}q^{Mn}.
\end{align*}

Throughout, we consider
\begin{align}\label{eq:H-sum-form}
	\sH(q):=\sum_{m,n\in\mathbb{Z}}(-1)^{\kappa m+\lambda n}q^{AM\binom{m}{2}+A'Mm+BM\binom{n}{2}+B'Mn+um+vn+w},
\end{align}
where $M\ge 1$, $A\ge 1$, $B\ge 1$, $A'$, $B'$, $u$, $v$, $w$, $\kappa$ and $\lambda$ are integers. Notice that by the definition of Ramanujan's theta series,
\begin{align}
	\sH(q) &= q^{w}f\big((-1)^{\kappa}q^{u+A'M},(-1)^{\kappa}q^{-u+(A-A')M}\big)\notag\\
	&\quad\times f\big((-1)^{\lambda}q^{v+B'M},(-1)^{\lambda}q^{-v+(B-B')M}\big).
\end{align}
Let $d^*$, $d_u$, $d_v$, $a_1$, $b_1$, $a_2$, $b_2$, $m_0$ and $n_0$ be as in Theorem \ref{th:w}. We further require that $d^*\mid w$. It follows from \eqref{eq:sublattice-general} with the following changes of variables in \eqref{eq:H-sum-form}:
\begin{align*}
	\left\{
	\begin{aligned}
		m&= (d_v/d^*)(a_1s+b_1t)+m_0,\\
		n&= (d_u/d^*)(a_2s+b_2t)+n_0,
	\end{aligned}
	\right.
\end{align*}
that
\begin{align}\label{eq:UM(H)-general}
	&\mathbf{H}_M\big(\sH(q)\big)\notag\\
	&=(-1)^{\kappa m_0+\lambda n_0} q^{M\big(A\binom{m_0}{2}+A'm_0+B\binom{n_0}{2}+B'n_0\big)+(um_0+vn_0+w)}\notag\\
	&\quad\times\sum_{s,t\in\mathbb{Z}}(-1)^{(\kappa a_1d_v+\lambda a_2d_u)(d^*)^{-1} s+(\kappa b_1d_v+\lambda b_2d_u)(d^*)^{-1} t}\notag\\
	&\quad\times q^{M (Aa_1^2d_v^2+Ba_2^2d_u^2)(d^*)^{-2}\binom{s}{2}+M (Ab_1^2d_v^2+Bb_2^2d_u^2)(d^*)^{-2}\binom{t}{2}}\notag\\
	&\quad\times q^{M\big(A\binom{a_1d_v(d^*)^{-1}}{2}+B\binom{a_2d_u(d^*)^{-1}}{2}\big)s+M(Aa_1d_v m_0+A'a_1d_v+Ba_2d_u n_0+B'a_2d_u)(d^*)^{-1}s}\notag\\
	&\quad\times q^{M\big(A\binom{b_1d_v(d^*)^{-1}}{2}+B\binom{b_2d_u(d^*)^{-1}}{2}\big)t+M(Ab_1d_v m_0+A'b_1d_v+Bb_2d_u n_0+B'b_2d_u)(d^*)^{-1}t}\notag\\
	&\quad\times q^{M(Aa_1 b_1d_v^2+Ba_2 b_2d_u^2)(d^*)^{-2}st}\notag\\
	&\quad\times q^{(ua_1d_v+va_2d_u)(d^*)^{-1}s}\cdot q^{(ub_1d_v+vb_2d_u)(d^*)^{-1}t}.
\end{align}

\begin{theorem}\label{th:UMH}
	Let $\sH(q)$ be as in \eqref{eq:H-sum-form}, namely,
	\begin{align*}
		\sH(q)&=\sum_{m,n\in\mathbb{Z}}(-1)^{\kappa m+\lambda n}q^{AM\binom{m}{2}+A'Mm+BM\binom{n}{2}+B'Mn+um+vn+w}\\
		&= q^{w}f\big((-1)^{\kappa}q^{u+A'M},(-1)^{\kappa}q^{-u+(A-A')M}\big)\notag\\
		&\quad\times f\big((-1)^{\lambda}q^{v+B'M},(-1)^{\lambda}q^{-v+(B-B')M}\big).
	\end{align*}
	Let $d^*=\gcd(u,v,M)$, $d=\gcd(u,v)$, $d_u=\gcd(u,M)$ and $d_v=\gcd(v,M)$.
	\begin{enumerate}[label=\textup{(\roman*)}, widest=ii, itemindent=*, leftmargin=*]
		
		\item
		Assume that $d^*\nmid w$. Then
		\begin{align}
			\mathbf{H}_M\big(\sH(q)\big) = 0.
		\end{align}
		
		\item 
		Assume that $d^*\mid w$. Let $(m_0,n_0)$ be a pair of integers such that
		\begin{align}\label{eq:m0n0}
			um_0+vn_0+w\equiv 0 \pmod{M}.
		\end{align}
		Assume that
		\begin{align}\label{eq:div-assump}
			\left\{
			\begin{aligned}
				d_u(Av^2+Bu^2)&\mid Av\cdot dM,\\
				d_v(Av^2+Bu^2)&\mid Bu\cdot dM.
			\end{aligned}
			\right.
		\end{align}
		Define
		\begin{align*}
			\tA&=\dfrac{ABd^2M^2}{(d^*)^2(Av^2+Bu^2)},\\
			\tA'&=\dfrac{ABd^2M^2}{2(d^*)^2(Av^2+Bu^2)}-\dfrac{AB(u+v)dM}{2d^*(Av^2+Bu^2)}\notag\\
			&\quad+\dfrac{(A'Bu+AB'v)dM}{d^*(Av^2+Bu^2)}+\dfrac{AB(um_0+vn_0)dM}{d^*(Av^2+Bu^2)},\\
			\tB&=\dfrac{Av^2+Bu^2}{d^2},\\
			\tB'&=\dfrac{Av^2+Bu^2}{2d^2}+\dfrac{Av-Bu}{2d}-\dfrac{A'v-B'u}{d}-\dfrac{Avm_0-Bun_0}{d}.
		\end{align*}
		Then
		\begin{align}
			&\mathbf{H}_M\big(\sH(q)\big)\notag\\
			&=(-1)^{\kappa m_0+\lambda n_0}q^{M\big(A\binom{m_0}{2}+A'm_0+B\binom{n_0}{2}+B'n_0\big)+(um_0+vn_0+w)}\notag\\
			&\quad\times\sum_{s,t\in\mathbb{Z}}(-1)^{\frac{(Bu\kappa+Av\lambda)dM}{d^*(Av^2+Bu^2)} s+\frac{-v\kappa+u\lambda}{d} t} q^{\tA M\binom{s}{2}+\tA'Ms+\tB M\binom{t}{2}+\tB'Mn+d(d^*)^{-1}Ms}\label{eq:UM-1}\\
			&=(-1)^{\kappa m_0+\lambda n_0}q^{M\big(A\binom{m_0}{2}+A'm_0+B\binom{n_0}{2}+B'n_0\big)+(um_0+vn_0+w)}\notag\\
			&\quad\times f\big((-1)^{\frac{(Bu\kappa+Av\lambda)dM}{d^*(Av^2+Bu^2)}}q^{d(d^*)^{-1}M+\tA'M},(-1)^{\frac{(Bu\kappa+Av\lambda)dM}{d^*(Av^2+Bu^2)}}q^{-d(d^*)^{-1}M+(\tA-\tA')M}\big)\notag\\
			&\quad\times f\big((-1)^{\frac{-v\kappa+u\lambda}{d}}q^{\tB'M},(-1)^{\frac{-v\kappa+u\lambda}{d}}q^{(\tB-\tB')M}\big).\label{eq:UM-2}
		\end{align}
		
	\end{enumerate}
\end{theorem}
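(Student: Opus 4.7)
The plan splits by the divisibility of $w$ by $d^*$, mirroring the dichotomy in Theorem~\ref{th:w}. For part~(i), I would simply invoke Theorem~\ref{th:w}(i): if $d^*\nmid w$, no $(m,n)\in\mathbb{Z}^2$ satisfies $um+vn+w\equiv 0\pmod M$, and since the exponent in \eqref{eq:H-sum-form} reduces modulo $M$ to $um+vn+w$, no monomial in $\sH(q)$ has exponent divisible by $M$. Hence $\mathbf{H}_M(\sH(q))=0$.

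For part~(ii), the starting point is the general expansion \eqref{eq:UM(H)-general} already derived in the excerpt via the sublattice parametrization of Theorem~\ref{th:w}(ii). I would take the specific parametrization \eqref{eq:sol-ab}, i.e.\ $(a_1,b_1,a_2,b_2)=(\alpha,-vd^*/(dd_v),\beta,ud^*/(dd_u))$, but with the free parameter $(\alpha,\beta)$ solving \eqref{eq:u-v-M-w} selected judiciously. The key observation is that the only cross term in the exponent of \eqref{eq:UM(H)-general} is $M(Aa_1b_1d_v^2+Ba_2b_2d_u^2)(d^*)^{-2}st$, and substituting the formulas for $b_1,b_2$ reduces it to $(d^*/d)(Bud_u\beta-Avd_v\alpha)(d^*)^{-2}\,M\,st$. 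Since the target formula \eqref{eq:UM-1} contains no $st$-term, I impose the side condition $Avd_v\alpha=Bud_u\beta$; combined with $ud_v\alpha+vd_u\beta=dM$ this forces
\[
d_v\alpha=\frac{Bu\,dM}{Av^2+Bu^2},\qquad d_u\beta=\frac{Av\,dM}{Av^2+Bu^2}.
\]
The divisibility hypothesis \eqref{eq:div-assump} is exactly what ensures $\alpha,\beta\in\mathbb{Z}$, so this is a legitimate choice; the coprimality $\gcd(a_1,b_1)=\gcd(a_2,b_2)=1$ required by Lemma~\ref{le:w=0} then follows from the argument at the end of that proof, which only uses the defining Diophantine equation.

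Once the cross term is eliminated, what remains is bookkeeping: match each coefficient in \eqref{eq:UM(H)-general} with its counterpart in \eqref{eq:UM-1}. The coefficients of $\binom{s}{2}$ and $\binom{t}{2}$ reduce, via $A(a_1d_v)^2+B(a_2d_u)^2=ABd^2M^2/(Av^2+Bu^2)$ and $A(b_1d_v)^2+B(b_2d_u)^2=(d^*)^2(Av^2+Bu^2)/d^2$, to $\tA$ and $\tB$ respectively. The linear coefficient of $s$ assembles $\tA'$ after collecting contributions from $A',B',m_0,n_0$ together with the half-coefficients from the two binomials $\binom{a_1d_v/d^*}{2}$ and $\binom{a_2d_u/d^*}{2}$, while the separate factor $(ua_1d_v+va_2d_u)/d^*=dM/d^*$ produces the additive $d(d^*)^{-1}Ms$. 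The linear coefficient of $t$ assembles $\tB'$ similarly, simplified by $ub_1d_v+vb_2d_u=0$. The sign characters are read off the same way. Finally, \eqref{eq:UM-2} follows from \eqref{eq:UM-1} by reversing the definition \eqref{eq:f(a,b)-summation} on the double sum in $(s,t)$.

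The main obstacle is organizational rather than conceptual: the substitution into \eqref{eq:UM(H)-general} produces many quadratic and linear pieces that must be tracked cleanly through the match with $\tA, \tA', \tB, \tB'$. The one genuinely creative step is the choice of $(\alpha,\beta)$ killing the $st$-cross term, and the divisibility hypothesis \eqref{eq:div-assump} is introduced precisely to make this choice available.
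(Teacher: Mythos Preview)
Your proposal is correct and follows essentially the same route as the paper: invoke Theorem~\ref{th:w}(i) for part~(i), and for part~(ii) substitute the particular solution $(\alpha,\beta)=\bigl(\tfrac{Bu\cdot dM}{d_v(Av^2+Bu^2)},\tfrac{Av\cdot dM}{d_u(Av^2+Bu^2)}\bigr)$ into the general expansion \eqref{eq:UM(H)-general} via \eqref{eq:sol-ab}. Your exposition is in fact more explicit than the paper's, which simply posits this $(\alpha,\beta)$ without motivation; your derivation of it from the requirement that the $st$-cross term vanish is exactly the reasoning behind the paper's choice, and your identification of \eqref{eq:div-assump} as precisely the integrality condition for this $(\alpha,\beta)$ is the point the paper makes in one line.
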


\begin{proof}
	The first part is a direct consequence of Theorem \ref{th:w}~(i). For the second part, we notice that the pair
	\begin{align*}
		(\alpha,\beta) = \left(\frac{Bu\cdot dM}{d_v(Av^2+Bu^2)},\frac{Av\cdot dM}{d_u(Av^2+Bu^2)}\right)
	\end{align*}
	satisfies
	\begin{align*}
		ud_v\alpha+vd_u\beta= dM.
	\end{align*}
	By \eqref{eq:div-assump}, $\alpha$ and $\beta$ are integers. In view of \eqref{eq:sublattice-particular}, or \eqref{eq:sol-ab}, we may choose
	\begin{align*}
		\left\{
		\begin{aligned}
			a_1&=\alpha,\\
			b_1&= -vd^*/(dd_v),\\
			a_2&=\beta,\\
			b_2&=ud^*/(dd_u),
		\end{aligned}
		\right.
	\end{align*}
	in \eqref{eq:UM(H)-general}. Making such substitutions gives \eqref{eq:UM-1}. Furthermore, \eqref{eq:UM-2} follows from the definition of Ramanujan's theta series.
\end{proof}

\begin{corollary}\label{coro:UMH=0-J}
	Let $\sH(q)$ be as in \eqref{eq:H-sum-form}, namely,
	\begin{align*}
		\sH(q)&=q^{w}f\big((-1)^{\kappa}q^{u+A'M},(-1)^{\kappa}q^{-u+(A-A')M}\big)\notag\\
		&\quad\times f\big((-1)^{\lambda}q^{v+B'M},(-1)^{\lambda}q^{-v+(B-B')M}\big).
	\end{align*}
	Let $d^*=\gcd(u,v,M)$, $d=\gcd(u,v)$, $d_u=\gcd(u,M)$ and $d_v=\gcd(v,M)$. Assume that
	\begin{align}\label{eq:div-assump-coro}
		\left\{
		\begin{aligned}
			d^*&\mid w,\\
			d_u(Av^2+Bu^2)&\mid Av\cdot dM,\\
			d_v(Av^2+Bu^2)&\mid Bu\cdot dM.
		\end{aligned}
		\right.
	\end{align}
	Assume also that
	\begin{align}\label{eq:-1-assump}
		(-1)^{\frac{-v\kappa+u\lambda}{d}} = -1.
	\end{align}
	If there exists an integer $J$ such that
	\begin{align}\label{eq:div-assump-coro-J}
		\left\{
		\begin{aligned}
			2d(Av^2+Bu^2)&\mid (2dMAv\cdot J-2dAvw-dAuv+dBu^2\\
			&\quad+2dA'uv-2dB'u^2-Auv^2-Bu^3),\\
			2d(Av^2+Bu^2)&\mid (2dMBu\cdot J-2dBuw+dAv^2-dBuv\\
			&\quad-2dA'v^2+2dB'uv+Av^3+Bu^2v),
		\end{aligned}
		\right.
	\end{align}
	then
	\begin{align}
		\mathbf{H}_M\big(\sH(q)\big) = 0.
	\end{align}
\end{corollary}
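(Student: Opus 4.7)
The plan is to show that, under the stated hypotheses, $\mathbf{H}_M(\sH(q))$ factors as a product in which one of the theta factors already vanishes. First, I apply Theorem~\ref{th:UMH}~(ii), whose hypotheses are guaranteed by \eqref{eq:div-assump-coro}, to rewrite $\mathbf{H}_M(\sH(q))$ in the factorised form \eqref{eq:UM-2}: a prefactor times $f_1 \cdot f_2$, where the second factor reads
\[
f_2 = f\Big((-1)^{(-v\kappa+u\lambda)/d}q^{\tB' M},\;(-1)^{(-v\kappa+u\lambda)/d}q^{(\tB-\tB')M}\Big).
\]
Hypothesis \eqref{eq:-1-assump} makes the internal sign equal to $-1$, so the Jacobi triple product \eqref{eq:f(a,b)-product} gives
\[
f_2 = (q^{\tB' M},\, q^{(\tB-\tB')M},\, q^{\tB M};\, q^{\tB M})_\infty.
\]
Since $\tB>0$, one of the first two Pochhammer factors above contains a vanishing term if and only if $\tB'M$ or $(\tB-\tB')M$ is a non-positive multiple of $\tB M$, and these two possibilities unify to the single divisibility $\tB\mid\tB'$. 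The task therefore reduces to verifying this one divisibility.

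Using the explicit formula for $\tB'$ in Theorem~\ref{th:UMH}~(ii) and clearing the $2d^2$ denominator, $\tB\mid\tB'$ is equivalent to the congruence
\begin{equation*}
2d(Avm_0-Bun_0)\,\equiv\,(Av^2+Bu^2)+d(Av-Bu)-2d(A'v-B'u) \pmod{2(Av^2+Bu^2)}, \quad(\star)
\end{equation*}
where $(m_0,n_0)$ is the pair chosen so that $J=(um_0+vn_0+w)/M$ is the integer asserted in \eqref{eq:div-assump-coro-J}. I now compare $(\star)$ with \eqref{eq:div-assump-coro-J} by scaling. Multiplying $(\star)$ by $u$ and substituting $um_0=MJ-w-vn_0$ turns its left side into $2dAv(MJ-w)-2d(Av^2+Bu^2)n_0$ and its right side into $u(Av^2+Bu^2)+du(Av-Bu)-2du(A'v-B'u)$. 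After rearrangement, the $-2d(Av^2+Bu^2)n_0$ term is absorbed by the modulus $2d(Av^2+Bu^2)$, and what remains coincides, term by term, with the first polynomial displayed in \eqref{eq:div-assump-coro-J}. An analogous computation with $v$ in place of $u$, using $vn_0=MJ-w-um_0$, matches the $v$-scaling of $(\star)$ with the second polynomial in \eqref{eq:div-assump-coro-J}.

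Finally, since $\gcd(u,v)=d$, B\'ezout produces integers $p,q$ with $pu+qv=d$; forming the corresponding linear combination of the two relations just derived yields $d\cdot(\star)\equiv 0\pmod{2d(Av^2+Bu^2)}$, which is the same as $(\star)$ modulo $2(Av^2+Bu^2)$. Hence $\tB\mid\tB'$, so $f_2=0$ and $\mathbf{H}_M(\sH(q))=0$. The main technical obstacle is the algebraic bookkeeping underlying the scaling step: one must expand $u\cdot(\star)$ and $v\cdot(\star)$ after eliminating $m_0$ or $n_0$ via the defining relation for $J$, and then recognise the intricate polynomials of \eqref{eq:div-assump-coro-J} term-for-term; the B\'ezout wrap-up is then routine.
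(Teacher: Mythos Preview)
Your approach is essentially the paper's: apply Theorem~\ref{th:UMH}~(ii) and show the second theta factor $f_2$ vanishes. The paper, however, is more direct. It simply \emph{defines}
\[
n_0=\frac{\text{(first numerator in \eqref{eq:div-assump-coro-J})}}{2d(Av^2+Bu^2)},\qquad
m_0=\frac{\text{(second numerator in \eqref{eq:div-assump-coro-J})}}{2d(Av^2+Bu^2)},
\]
which are integers by hypothesis, checks the algebraic identity $um_0+vn_0+w=JM$ (so \eqref{eq:m0n0} holds), and then observes that with this choice $\tB'=0$, whence $f_2=f(-1,-q^{\tB M})=0$. Your scaling-and-B\'ezout detour recovers the weaker statement $\tB\mid\tB'$, which is enough, but is unnecessary once one has the explicit $(m_0,n_0)$.

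There is one genuine gap in your write-up. You assert that $(m_0,n_0)$ can be chosen so that $um_0+vn_0+w=JM$ \emph{exactly}, but this equation is solvable in integers only if $d\mid JM-w$, and nothing in the hypotheses makes this immediately obvious. The paper's explicit choice above is precisely what certifies this solvability; you should either reproduce that construction or at least verify $d\mid JM-w$ from \eqref{eq:div-assump-coro-J} before invoking such a pair. Once that is done, your argument goes through.
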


\begin{proof}
	Since \eqref{eq:div-assump-coro} satisfies the assumptions in Theorem \ref{th:UMH}~(ii), we have
	\begin{align*}
		&\mathbf{H}_M\big(\sH(q)\big)\notag\\
		&=(-1)^{\kappa m_0+\lambda n_0}q^{M\big(A\binom{m_0}{2}+A'm_0+B\binom{n_0}{2}+B'n_0\big)+(um_0+vn_0+w)}\notag\\
		&\quad\times f\big((-1)^{\frac{(Bu\kappa+Av\lambda)dM}{d^*(Av^2+Bu^2)}}q^{d(d^*)^{-1}M+\tA'M},(-1)^{\frac{(Bu\kappa+Av\lambda)dM}{d^*(Av^2+Bu^2)}}q^{-d(d^*)^{-1}M+(\tA-\tA')M}\big)\notag\\
		&\quad\times f\big((-1)^{\frac{-v\kappa+u\lambda}{d}}q^{\tB'M},(-1)^{\frac{-v\kappa+u\lambda}{d}}q^{(\tB-\tB')M}\big).
	\end{align*}
	We choose
	\begin{align*}
		m_0&=\frac{2dMBu\cdot J-2dBuw+dAv^2-dBuv-2dA'v^2+2dB'uv+Av^3+Bu^2v}{2d(Av^2+Bu^2)},\\
		n_0&=\frac{2dMAv\cdot J-2dAvw-dAuv+dBu^2+2dA'uv-2dB'u^2-Auv^2-Bu^3}{2d(Av^2+Bu^2)};
	\end{align*}
	both are integers according to the assumptions \eqref{eq:div-assump-coro-J}. We also find that
	\begin{align*}
		um_0+vn_0+w=JM,
	\end{align*}
	so \eqref{eq:m0n0} is satisfied. With such a choice of $m_0$ and $n_0$, as well as the assumption \eqref{eq:-1-assump}, we have
	\begin{align*}
		f\big((-1)^{\frac{-v\kappa+u\lambda}{d}}q^{\tB'M},(-1)^{\frac{-v\kappa+u\lambda}{d}}q^{(\tB-\tB')M}\big)&=f\big({-1},-q^{\tB M}\big)\\
		&=(1,q^{\tB M},q^{\tB M};q^{\tB M})_\infty=0.
	\end{align*}
	This, in consequence, implies the desired result.
\end{proof}

\subsection{The first cancelation}

In this part, we construct a pair of products of two theta functions such that they differ by at most a negative sign under the action of the $\mathbf{H}$-operator.

\begin{theorem}\label{th:cancel-1}
	Let $\sH(q)$ be as in \eqref{eq:H-sum-form}, namely,
	\begin{align*}
		\sH(q)&= q^{w}f\big((-1)^{\kappa}q^{u+A'M},(-1)^{\kappa}q^{-u+(A-A')M}\big)\notag\\
		&\quad\times f\big((-1)^{\lambda}q^{v+B'M},(-1)^{\lambda}q^{-v+(B-B')M}\big).
	\end{align*}
	Also, define
	\begin{align}
		\hat{\sH}(q)& =q^{\hw}f\big((-1)^{\kappa}q^{u+A'M},(-1)^{\kappa}q^{-u+(A-A')M}\big)\notag\\
		&\quad\times f\big((-1)^{\lambda}q^{v+(B-B')M},(-1)^{\lambda}q^{-v+B'M}\big),
	\end{align}
	where
	\begin{align*}
		\hw=w+\left(1-\frac{2B'}{B}\right)v,
	\end{align*}
	provided that $B\mid 2B'v$.
	
	Let $d^*=\gcd(u,v,M)$, $d=\gcd(u,v)$, $d_u=\gcd(u,M)$ and $d_v=\gcd(v,M)$. Then the following statements hold true.
	
	\begin{enumerate}[label=\textup{(\roman*)}, widest=ii, itemindent=*, leftmargin=*]
		
		\item 
		If $d^*B\mid 2B'v$, then $w$ and $\hw$ are simultaneously multiples of $d^*$, or simultaneously nonmultiples of $d^*$. In particular, if $w$ is a nonmultiple of $d^*$, we have
		\begin{align}\label{eq:cancel-1-=0}
			\mathbf{H}_M\big(\sH(q)\big) = \mathbf{H}_M\big(\hat{\sH}(q)\big) = 0.
		\end{align}
		
		\item If $w$ is a multiple of $d^*$, we further let $K$ be such that
		\begin{align}\label{eq:K-cong}
			\left\{
			\begin{aligned}
				K&\equiv 1 \pmod{M/d^*},\\
				K&\equiv 0 \pmod{d/\gcd(u,v,w)}.
			\end{aligned}
			\right.
		\end{align}
		Assume that
		\begin{align}\label{eq:assump-d*}
			\left\{
			\begin{aligned}
				d^*B&\mid 2B'v,\\
				d_u(Av^2+Bu^2)&\mid Av\cdot dM,\\
				d_v(Av^2+Bu^2)&\mid Bu\cdot dM,\\
				(Av^2+Bu^2)&\mid (A-2A')v^2-(B-2B')uv-2Buw\cdot K,\\
				B(Av^2+Bu^2)&\mid B(A-2A')uv+A(B-2B')v^2+2ABvw\cdot K.
			\end{aligned}
			\right.
		\end{align}
		Then
		\begin{align}
			\mathbf{H}_M\big(\sH(q)\big) = (-1)^{\epsilon} \mathbf{H}_M\big(\hat{\sH}(q)\big),
		\end{align}
		where
		\begin{align*}
			\epsilon &= \kappa\cdot \frac{(A-2A')v^2-(B-2B')uv-2Buw\cdot K}{Av^2+Bu^2}\notag\\
			&\quad - \lambda\cdot \frac{B(A-2A')uv+A(B-2B')v^2+2ABvw\cdot K}{B(Av^2+Bu^2)}.
		\end{align*}
	\end{enumerate}
\end{theorem}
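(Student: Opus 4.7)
The plan is to dispatch Part (i) directly and then carry out Part (ii) by invoking Theorem~\ref{th:UMH}(ii) for both $\sH$ and $\hat{\sH}$ and comparing the two outputs.

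For Part (i), first record that $\hw - w = (B - 2B')v/B$, which is an integer thanks to the standing assumption $B \mid 2B'v$ needed for $\hw$ to be defined. Under the stronger hypothesis $d^* B \mid 2B'v$, writing $2B'v = d^* B \cdot t$ gives $\hw - w = v - d^* t$; since $d^* = \gcd(u, v, M)$ automatically divides $v$, this difference is a multiple of $d^*$. Hence $w$ and $\hw$ share the same divisibility behaviour modulo $d^*$, and when neither is divisible by $d^*$, both $\sH$ and $\hat{\sH}$, having the same $u, v, M$ and therefore the same $d^*$, fall under the hypothesis of Theorem~\ref{th:UMH}(i); this yields \eqref{eq:cancel-1-=0}.

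For Part (ii), apply Theorem~\ref{th:UMH}(ii) twice: once to $\sH$ with parameters $(A, A', B, B', u, v, w, \kappa, \lambda)$ and once to $\hat{\sH}$ with parameters $(A, A', B, B - B', u, v, \hw, \kappa, \lambda)$. The divisibility hypotheses of Theorem~\ref{th:UMH}(ii) are shared, being precisely lines 2 and 3 of \eqref{eq:assump-d*}, and the derived invariants $d^*, d, d_u, d_v, \tA, \tB$ agree in the two applications. The remaining freedom is the choice of auxiliary pairs $(m_0, n_0)$ and $(\hat{m}_0, \hat{n}_0)$ solving the respective linear congruences. Use the CRT-constructed integer $K$ from \eqref{eq:K-cong} to pin down a coordinated choice: the last two lines of \eqref{eq:assump-d*} ensure that the prescribed integer shifts $\hat{m}_0 - m_0$ and $\hat{n}_0 - n_0$, which are exactly the two fractions appearing in the definition of $\epsilon$ (up to sign), are well-defined integers.

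With both invocations of \eqref{eq:UM-2} in hand, reconcile the two resulting theta-product expressions via the Jacobi-triple-product quasi-periodicity $f(a, b) = a^{n(n+1)/2} b^{n(n-1)/2} f(a(ab)^n, b(ab)^{-n})$. Under the coordinated choice of auxiliary pairs, the shifted values of $\tA'$ and $\tB'$ differ from the unshifted ones by integer multiples of $\tA$ and $\tB$ respectively; the resulting quasi-periodicity $q$-powers and signs cancel exactly against the differences in the $q$-prefactors coming from $M(A \binom{m_0}{2} + A' m_0 + B \binom{n_0}{2} + B' n_0) + (um_0 + vn_0 + w)$ versus its $\hat{\sH}$-counterpart. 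What survives is precisely the sign $(-1)^\epsilon$ multiplying $\mathbf{H}_M(\hat{\sH})$. The main obstacle is the detailed bookkeeping of all these cancellations: each of the five divisibility hypotheses in \eqref{eq:assump-d*}, together with the two congruences defining $K$, is exactly what is needed to enforce integrality or compatibility at a specific step, and verifying that they combine to yield the asserted sign formula for $\epsilon$ requires careful tracking throughout.
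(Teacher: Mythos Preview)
Your overall strategy matches the paper's: apply Theorem~\ref{th:UMH}(ii) once to $\sH$ and once to $\hat{\sH}$ (with $B'$ replaced by $B-B'$), then choose the auxiliary pairs $(m_0,n_0)$ and $(\hm_0,\hn_0)$ in a coordinated way so that the two theta-product outputs can be compared directly. Part~(i) is exactly as in the paper.

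There is, however, one imprecision in your Part~(ii) that would derail the computation if taken literally. You speak of ``integer shifts $\hat m_0-m_0$ and $\hat n_0-n_0$'' and of the resulting $\tA',\tB'$ differing from the unshifted ones by integer multiples of $\tA,\tB$, to be absorbed via quasi-periodicity. In the paper's argument the relation is not an additive shift but involves a \emph{negation}: one first fixes $(m_0,n_0)$ via $um_0+vn_0=-wK$ (this is where the second congruence on $K$ is used, to make $wK/d$ an integer), and then sets
\[
\hm_0=-m_0+\frac{(A-2A')v^2-(B-2B')uv-2BuwK}{Av^2+Bu^2},\qquad
\hn_0=-n_0-\frac{B(A-2A')uv+A(B-2B')v^2+2ABvwK}{B(Av^2+Bu^2)}.
\]
With this choice one gets $\hat{\tA}'=\tA'$ \emph{exactly} and $\hat{\tB}'=\tB-\tB'$, so the two theta factors agree on the nose (the second via $f(a,b)=f(b,a)$); no quasi-periodicity step is needed. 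The quadratic $q$-prefactors also match identically, and the entire sign $(-1)^\epsilon$ arises solely from the front factor $(-1)^{\kappa\hm_0+\lambda\hn_0}$ versus $(-1)^{\kappa m_0+\lambda n_0}$, since $\kappa\hm_0+\lambda\hn_0\equiv \kappa m_0+\lambda n_0+\epsilon\pmod 2$. If you try to realise the comparison through an additive shift $\hm_0=m_0+c$ and quasi-periodicity, you will find the bookkeeping does not close: the map $B'\mapsto B-B'$ corresponds at the level of the double sum to $n\mapsto -n+\text{const}$, which is why the minus sign on $m_0,n_0$ is essential.
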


\begin{proof}
	If $d^*B\mid 2B'v$, we find that
	\begin{align*}
		\hw=w+v-d^*\cdot \frac{2B'v}{d^*B}.
	\end{align*}
	Noticing that $v$ and $d^*\cdot \frac{2B'v}{d^*B}$ are multiples of $d^*$ yields the first argument in the first part. Further, if $w$ is a nonmultiple of $d^*$, and so is $\hw$, we deduce \eqref{eq:cancel-1-=0} by Theorem \ref{th:UMH}~(i).
	
	For the second part, we know from the first part that $d^*\mid \hw$ as we have assumed that $d^*\mid w$. In Theorem \ref{th:UMH}, we replace $B'$ with $B-B'$ and obtain
	\begin{align}\label{eq:UMHt}
		&\mathbf{H}_M\big(\hat{\sH}(q)\big)\notag\\
		&=(-1)^{\kappa \hm_0+\lambda \hn_0}q^{M\big(A\binom{\hm_0}{2}+A'\hm_0+B\binom{\hn_0}{2}+(B-B')\hn_0\big)+(u\hm_0+v\hn_0+\hw)}\notag\\
		&\quad\times f\big((-1)^{\frac{(Bu\kappa+Av\lambda)dM}{d^*(Av^2+Bu^2)}}q^{d(d^*)^{-1}M+\hat{\tA}'M},(-1)^{\frac{(Bu\kappa+Av\lambda)dM}{d^*(Av^2+Bu^2)}}q^{-d(d^*)^{-1}M+(\hat{\tA}-\hat{\tA}')M}\big)\notag\\
		&\quad\times f\big((-1)^{\frac{-v\kappa+u\lambda}{d}}q^{\hat{\tB}'M},(-1)^{\frac{-v\kappa+u\lambda}{d}}q^{(\hat{\tB}-\hat{\tB}')M}\big),
	\end{align}
	where
	\begin{align*}
		\hat{\tA}&=\dfrac{ABd^2M^2}{(d^*)^2(Av^2+Bu^2)},\\
		\hat{\tA}'&=\dfrac{ABd^2M^2}{2(d^*)^2(Av^2+Bu^2)}-\dfrac{AB(u+v)dM}{2d^*(Av^2+Bu^2)}\notag\\
		&\quad+\dfrac{(A'Bu+A(B-B')v)dM}{d^*(Av^2+Bu^2)}+\dfrac{AB(u\hm_0+v\hn_0)dM}{d^*(Av^2+Bu^2)},\\
		\hat{\tB}&=\dfrac{Av^2+Bu^2}{d^2},\\
		\hat{\tB}'&=\dfrac{Av^2+Bu^2}{2d^2}+\dfrac{Av-Bu}{2d}-\dfrac{A'v-(B-B')u}{d}-\dfrac{Av\hm_0-Bu\hn_0}{d},
	\end{align*}
	and $\hm_0$ and $\hn_0$ are such that
	\begin{align}\label{eq:hm0hn0}
		u\hm_0+v\hn_0+\hw \equiv 0 \pmod{M}.
	\end{align}
	Recalling from \eqref{eq:m0n0}, $m_0$ and $n_0$ are such that
	\begin{align*}
		um_0+vn_0+w\equiv 0 \pmod{M}.
	\end{align*}
	Since $w$ is a multiple of $d^*=\gcd(u,v,M)$, the above is equivalent to
	\begin{align}\label{eq:m0n0-cong-assump}
		\frac{u}{d^*}m_0+\frac{v}{d^*}n_0+\frac{w}{d^*}\equiv 0 \pmod{M/d^*}.
	\end{align}
	The assumption in \eqref{eq:K-cong} that $K$ is a multiple of $d/\gcd(u,v,w)$ implies that
	\begin{align*}
		\frac{wK}{d} = \frac{w}{\gcd(u,v,w)}\cdot \frac{K}{d/\gcd(u,v,w)}
	\end{align*}
	is an integer. We then choose $m_0$ and $n_0$ such that
	\begin{align}\label{eq:sol-m0n0}
		\frac{u}{d}m_0+\frac{v}{d}n_0=-\frac{wK}{d};
	\end{align}
	this linear Diophantine equation in $m_0$ and $n_0$ is solvable since $u/d$ and $v/d$ are coprime. Also, we may always find such a $K$ since $M/d^*$ is coprime to $d/\gcd(u,v,w)$. This is because $d/\gcd(u,v,w)$ is a divisor of $d/d^*$ by recalling that $w$ is a multiple of $d^*$, while in the meantime $M/d^*$ and $d/d^*$ are coprime. Thus, by \eqref{eq:K-cong} that $K\equiv 1 \pmod{M/d^*}$, we have
	\begin{align*}
		\frac{u}{d^*}m_0+\frac{v}{d^*}n_0 = -\frac{wK}{d}\cdot \frac{d}{d^*} = -K \frac{w}{d^*}\equiv -\frac{w}{d^*} \pmod{M/d^*},
	\end{align*}
	confirming \eqref{eq:m0n0-cong-assump}. Now we choose
	\begin{align*}
		\hm_0&=-m_0+\frac{(A-2A')v^2-(B-2B')uv-2Buw\cdot K}{Av^2+Bu^2},\\
		\hn_0&=-n_0-\frac{B(A-2A')uv+A(B-2B')v^2+2ABvw\cdot K}{B(Av^2+Bu^2)}.
	\end{align*}
	By \eqref{eq:assump-d*}, the above two are integers. We also want to point out that the choice of $K$ will not affect the last two divisibility conditions in \eqref{eq:assump-d*} and the value of $(-1)^{\epsilon}$. This is because these $K$ differ by a multiple of $dM/(d^*\gcd(u,v,w))$. Further, we notice that
	\begin{align*}
		\frac{2Avw\cdot \frac{dM}{d^*\gcd(u,v,w)}}{Av^2+Bu^2}&=2\cdot \frac{w}{\gcd(u,v,w)}\cdot \frac{Av\cdot dM}{d_u(Av^2+Bu^2)}\cdot \frac{d_u}{d^*},\\
		\frac{2Buw\cdot \frac{dM}{d^*\gcd(u,v,w)}}{Av^2+Bu^2}&=2\cdot \frac{w}{\gcd(u,v,w)}\cdot \frac{Bu\cdot dM}{d_v(Av^2+Bu^2)}\cdot \frac{d_v}{d^*},
	\end{align*}
	so both are even integers. Next, since $w$ is a multiple of $d^*$ and $K\equiv 1 \pmod{M/d^*}$, we have
	\begin{align*}
		u \hm_0+ v \hn_0 &= -Kw - \left(1-\frac{2B'}{B}\right)v\\
		&\equiv -w - \left(1-\frac{2B'}{B}\right)v=-\hw \pmod{M}.
	\end{align*}
	Thus, \eqref{eq:hm0hn0} is satisfied. With such a choice of $\hm_0$ and $\hn_0$, we find that
	\begin{align*}
		\left\{
		\begin{aligned}
			\hat{\tA}&=\tA,\\
			\hat{\tA}'&=\tA',
		\end{aligned}
		\right.
		\qquad\text{and}\qquad
		\left\{
		\begin{aligned}
			\hat{\tB}&=\tB,\\
			\hat{\tB}'&=\tB-\tB'.
		\end{aligned}
		\right.
	\end{align*}
	Also,
	\begin{align*}
		A\binom{\hm_0}{2}+A'\hm_0+B\binom{\hn_0}{2}&+(B-B')\hn_0\\
		&= A\binom{m_0}{2}+A'm_0+B\binom{n_0}{2}+B'n_0,
	\end{align*}
	and
	\begin{align*}
		u\hm_0+v\hn_0+\hw = um_0+vn_0+w.
	\end{align*}
	Recall that
	\begin{align*}
		\epsilon &= \kappa\cdot \frac{(A-2A')v^2-(B-2B')uv-2Buw\cdot K}{Av^2+Bu^2}\\
		&\quad - \lambda\cdot \frac{B(A-2A')uv+A(B-2B')v^2+2ABvw\cdot K}{B(Av^2+Bu^2)}.
	\end{align*}
	Substituting the above into \eqref{eq:UMHt} gives
	\begin{align*}
		&\mathbf{H}_M\big(\hat{\sH}(q)\big)\notag\\
		&=(-1)^{\epsilon}\cdot (-1)^{\kappa m_0+\lambda n_0}q^{M\big(A\binom{m_0}{2}+A'm_0+B\binom{n_0}{2}+B'n_0\big)+(um_0+vn_0+w)}\notag\\
		&\quad\times f\big((-1)^{\frac{(Bu\kappa+Av\lambda)dM}{d^*(Av^2+Bu^2)}}q^{d(d^*)^{-1}M+\tA'M},(-1)^{\frac{(Bu\kappa+Av\lambda)dM}{d^*(Av^2+Bu^2)}}q^{-d(d^*)^{-1}M+(\tA-\tA')M}\big)\notag\\
		&\quad\times f\big((-1)^{\frac{-v\kappa+u\lambda}{d}}q^{(\tB-\tB')M},(-1)^{\frac{-v\kappa+u\lambda}{d}}q^{\tB'M}\big).
	\end{align*}
	Finally, it follows from \eqref{eq:UM-2} that
	\begin{align*}
		\mathbf{H}_M\big(\hat{\sH}(q)\big) = (-1)^{\epsilon} \mathbf{H}_M\big(\sH(q)\big).
	\end{align*}
	This implies our desired result.
\end{proof}

\subsection{The second cancelation}

We construct another pair of products of two theta functions such that they differ by at most a negative sign under the action of the $\mathbf{H}$-operator.

\begin{theorem}\label{th:cancel-2}
	Let $\sH(q)$ be as in \eqref{eq:H-sum-form}, namely,
	\begin{align*}
		\sH(q)&= q^{w}f\big((-1)^{\kappa}q^{u+A'M},(-1)^{\kappa}q^{-u+(A-A')M}\big)\notag\\
		&\quad\times f\big((-1)^{\lambda}q^{v+B'M},(-1)^{\lambda}q^{-v+(B-B')M}\big).
	\end{align*}
	Also, define
	\begin{align}
		\check{\sH}(q)& =q^{\chw}f\big((-1)^{\kappa}q^{u+(A-A')M},(-1)^{\kappa}q^{-u+A'M}\big)\notag\\
		&\quad\times f\big((-1)^{\lambda}q^{v+(B-B')M},(-1)^{\lambda}q^{-v+B'M}\big),
	\end{align}
	where
	\begin{align*}
		\chw=w+\left(1-\frac{2A'}{A}\right)u+\left(1-\frac{2B'}{B}\right)v,
	\end{align*}
	provided that $AB \mid 2(A'Bu+AB'v)$.
	
	Let $d^*=\gcd(u,v,M)$, $d=\gcd(u,v)$, $d_u=\gcd(u,M)$ and $d_v=\gcd(v,M)$. Then the following statements hold true.
	
	\begin{enumerate}[label=\textup{(\roman*)}, widest=ii, itemindent=*, leftmargin=*]
		
		\item 
		If $d^*AB \mid 2(A'Bu+AB'v)$, then $w$ and $\chw$ are simultaneously multiples of $d^*$, or simultaneously nonmultiples of $d^*$. In particular, if $w$ is a nonmultiple of $d^*$, we have
		\begin{align}
			\mathbf{H}_M\big(\sH(q)\big) = \mathbf{H}_M\big(\check{\sH}(q)\big) = 0.
		\end{align}
		
		\item
		If $w$ is a multiple of $d^*$, we further let $K$ be such that
		\begin{align*}
			\left\{
			\begin{aligned}
				K&\equiv 1 \pmod{M/d^*},\\
				K&\equiv 0 \pmod{d/\gcd(u,v,w)}.
			\end{aligned}
			\right.
		\end{align*}
		Assume that
		\begin{align}\label{eq:assump-2-d*}
			\left\{
			\begin{aligned}
				d^*AB &\mid 2(A'Bu+AB'v),\\
				d_u(Av^2+Bu^2)&\mid Av\cdot dM,\\
				d_v(Av^2+Bu^2)&\mid Bu\cdot dM,\\
				A(Av^2+Bu^2)&\mid B(A-2A')u^2+A(B-2B')uv+2ABuw\cdot K,\\
				B(Av^2+Bu^2)&\mid B(A-2A')uv+A(B-2B')v^2+2ABvw\cdot K.
			\end{aligned}
			\right.
		\end{align}
		Then
		\begin{align}
			\mathbf{H}_M\big(\sH(q)\big) = (-1)^{\varepsilon} \mathbf{H}_M\big(\check{\sH}(q)\big),
		\end{align}
		where
		\begin{align*}
			\varepsilon &= -\kappa\cdot \frac{B(A-2A')u^2+A(B-2B')uv+2ABuw\cdot K}{A(Av^2+Bu^2)}\notag\\
			&\quad - \lambda\cdot \frac{B(A-2A')uv+A(B-2B')v^2+2ABvw\cdot K}{B(Av^2+Bu^2)}.
		\end{align*}
	\end{enumerate}
\end{theorem}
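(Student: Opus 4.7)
The plan is to follow the template of the proof of Theorem~\ref{th:cancel-1}: apply Theorem~\ref{th:UMH}~(ii) to both $\sH(q)$ and $\check{\sH}(q)$, and choose $(\chm_0,\chn_0)$ so that the two theta-product outputs agree up to a sign, after invoking the symmetry $f(a,b)=f(b,a)$. The essential new feature is that $\check{\sH}$ simultaneously replaces $A'\mapsto A-A'$ and $B'\mapsto B-B'$, so both coordinates must be realigned at once. The natural target is $\check{\tA}'=\tA'$ together with $\check{\tB}'=\tB-\tB'$: the $A$-theta factor then agrees on the nose, while the $B$-theta factor becomes the swap of its $\sH$-counterpart and equals the latter by $f(a,b)=f(b,a)$. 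Note that, unlike in Theorem~\ref{th:cancel-1}, no parity hypothesis on $(-1)^{(-v\kappa+u\lambda)/d}$ is required here, since both $B$-arguments carry the same sign.

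For the first assertion, write $\chw-w=u+v-2(A'Bu+AB'v)/(AB)$. Both $u$ and $v$ are multiples of $d^*$ by definition, and the last term is a multiple of $d^*$ by the hypothesis $d^*AB\mid 2(A'Bu+AB'v)$; hence $d^*\mid(\chw-w)$, so $d^*\mid w$ iff $d^*\mid\chw$. When neither divisibility holds, both $\mathbf{H}_M$-images vanish by Theorem~\ref{th:UMH}~(i).

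For the second assertion, take $(m_0,n_0)$ as in the proof of Theorem~\ref{th:cancel-1}, solving $(u/d)m_0+(v/d)n_0=-wK/d$, and parameterize $\chm_0=-m_0+\alpha$, $\chn_0=-n_0+\beta$. Imposing $\check{\tA}'=\tA'$ and $\check{\tB}'=\tB-\tB'$ in the formulas of Theorem~\ref{th:UMH}~(ii), and using $um_0+vn_0=-wK$, collapses to the linear system
\begin{align*}
u\alpha+v\beta&=w-\chw-2wK,\\
Av\alpha-Bu\beta&=0,
\end{align*}
whose determinant $-(Av^2+Bu^2)$ is nonzero. The unique solution is
\begin{align*}
\alpha&=-\frac{B(A-2A')u^2+A(B-2B')uv+2ABuw\cdot K}{A(Av^2+Bu^2)},\\
\beta&=-\frac{B(A-2A')uv+A(B-2B')v^2+2ABvw\cdot K}{B(Av^2+Bu^2)},
\end{align*}
and the last two divisibility conditions in \eqref{eq:assump-2-d*} are precisely what force $\alpha,\beta\in\mathbb{Z}$. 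The first linear equation combined with $K\equiv 1\pmod{M/d^*}$ gives $u\chm_0+v\chn_0+\chw\equiv 0\pmod{M}$, so $(\chm_0,\chn_0)$ is a legitimate choice. The sign comparison $(-1)^{\kappa\chm_0+\lambda\chn_0}=(-1)^{\kappa m_0+\lambda n_0}\cdot(-1)^{\kappa\alpha+\lambda\beta}$ (from $(-1)^{-\kappa m_0}=(-1)^{\kappa m_0}$) then produces exactly the claimed $(-1)^{\varepsilon}$.

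The main obstacle, compared with Theorem~\ref{th:cancel-1}, is preservation of the $q$-power prefactor $M(A\binom{m_0}{2}+A'm_0+B\binom{n_0}{2}+B'n_0)+(um_0+vn_0+w)$ when $(m_0,A',B')$ is replaced by $(\chm_0,A-A',B-B')$. In the cancel-1 setting only the $B$-block changed and the matching was immediate; here both blocks change, and a direct expansion reduces the required equality to $-Am_0\alpha-Bn_0\beta+A\binom{\alpha+1}{2}+B\binom{\beta+1}{2}-A'\alpha-B'\beta=0$. Substituting the closed forms $\alpha=BuT/(Av^2+Bu^2)$, $\beta=AvT/(Av^2+Bu^2)$ with $T=w-\chw-2wK$, and using $um_0+vn_0=-wK$, the left-hand side becomes $\tfrac{T}{Av^2+Bu^2}\bigl[ABwK+\tfrac{1}{2}AB(T+u+v)-(A'Bu+AB'v)\bigr]$, and the bracket vanishes by the very definition of $T$ together with the formula for $\chw-w$. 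Independence of $(-1)^{\varepsilon}$ from the particular lift $K$ then follows exactly as in Theorem~\ref{th:cancel-1}: altering $K$ by a multiple of $dM/(d^*\gcd(u,v,w))$ shifts $\alpha$ and $\beta$ by even integers.
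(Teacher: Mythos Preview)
Your proof is correct and follows essentially the same route as the paper: apply Theorem~\ref{th:UMH}~(ii) to $\check{\sH}$ with $A'\mapsto A-A'$, $B'\mapsto B-B'$, choose $\chm_0=-m_0+\alpha$, $\chn_0=-n_0+\beta$ with exactly the same $\alpha,\beta$ the paper writes down, and verify $\check{\tA}'=\tA'$, $\check{\tB}'=\tB-\tB'$ together with the prefactor and congruence. You have in fact supplied more detail than the paper does---most notably the explicit derivation of the linear system and the algebraic check that the $q$-prefactor is preserved---whereas the paper simply asserts these equalities and refers back to the template of Theorem~\ref{th:cancel-1}.
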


\begin{proof}
	The proof is almost identical to that for Theorem \ref{th:cancel-1}. For the first part, we only need to notice that
	\begin{align*}
		\chw=w+u+v-d^*\cdot \frac{2(A'Bu+AB'v)}{d^*AB}.
	\end{align*}
	For the second part, the first difference is that we have to replace $A'$ with $A-A'$ and $B'$ with $B-B'$ in Theorem \ref{th:UMH}. Then
	\begin{align*}
		&\mathbf{H}_M\big(\check{\sH}(q)\big)\notag\\
		&=(-1)^{\kappa \chm_0+\lambda \chn_0}q^{M\big(A\binom{\chm_0}{2}+(A-A')\chm_0+B\binom{\chn_0}{2}+(B-B')\chn_0\big)+(u\chm_0+v\chn_0+\chw)}\notag\\
		&\quad\times f\big((-1)^{\frac{(Bu\kappa+Av\lambda)dM}{d^*(Av^2+Bu^2)}}q^{d(d^*)^{-1}M+\check{\tA}'M},(-1)^{\frac{(Bu\kappa+Av\lambda)dM}{d^*(Av^2+Bu^2)}}q^{-d(d^*)^{-1}M+(\check{\tA}-\check{\tA}')M}\big)\notag\\
		&\quad\times f\big((-1)^{\frac{-v\kappa+u\lambda}{d}}q^{\check{\tB}'M},(-1)^{\frac{-v\kappa+u\lambda}{d}}q^{(\check{\tB}-\check{\tB}')M}\big),
	\end{align*}
	where
	\begin{align*}
		\check{\tA}&=\dfrac{ABd^2M^2}{(d^*)^2(Av^2+Bu^2)},\\
		\check{\tA}'&=\dfrac{ABd^2M^2}{2(d^*)^2(Av^2+Bu^2)}-\dfrac{AB(u+v)dM}{2d^*(Av^2+Bu^2)}\notag\\
		&\quad+\dfrac{((A-A')Bu+A(B-B')v)dM}{d^*(Av^2+Bu^2)}+\dfrac{AB(u\chm_0+v\chn_0)dM}{d^*(Av^2+Bu^2)},\\
		\check{\tB}&=\dfrac{Av^2+Bu^2}{d^2},\\
		\check{\tB}'&=\dfrac{Av^2+Bu^2}{2d^2}+\dfrac{Av-Bu}{2d}-\dfrac{(A-A')v-(B-B')u}{d}-\dfrac{Av\chm_0-Bu\chn_0}{d},
	\end{align*}
	and $\chm_0$ and $\chn_0$ are such that
	\begin{align*}
		u\chm_0+v\chn_0+\chw \equiv 0 \pmod{M}.
	\end{align*}
	The second and major difference comes when we determine $\chm_0$ and $\chn_0$ such that the above congruence holds true. Let $m_0$ and $n_0$ be as in \eqref{eq:sol-m0n0}. This time we choose
	\begin{align*}
		\chm_0&=-m_0-\frac{B(A-2A')u^2+A(B-2B')uv+2ABuw\cdot K}{A(Av^2+Bu^2)},\\
		\chn_0&=-n_0-\frac{B(A-2A')uv+A(B-2B')v^2+2ABvw\cdot K}{B(Av^2+Bu^2)};
	\end{align*}
	such a choice ensures the above congruence for $\chm_0$ and $\chn_0$. We then find that
	\begin{align*}
		\left\{
		\begin{aligned}
			\check{\tA}&=\tA,\\
			\check{\tA}'&=\tA',
		\end{aligned}
		\right.
		\qquad\text{and}\qquad
		\left\{
		\begin{aligned}
			\check{\tB}&=\tB,\\
			\check{\tB}'&=\tB-\tB'.
		\end{aligned}
		\right.
	\end{align*}
	Also,
	\begin{align*}
		A\binom{\chm_0}{2}+(A-A')\chm_0+B\binom{\chn_0}{2}&+(B-B')\chn_0\\
		&= A\binom{m_0}{2}+A'm_0+B\binom{n_0}{2}+B'n_0,
	\end{align*}
	and
	\begin{align*}
		u\chm_0+v\chn_0+\chw = um_0+vn_0+w.
	\end{align*}
	The desired result therefore follows.
\end{proof}

\section{Coefficient-vanishing results}\label{sec:proof}

\subsection{Type I --- Theorem \ref{th:Type-I}}

We consider in this section the three coefficient-vanishing results in Theorem \ref{th:Type-I}. In principle, our proof is based on a pairing-and-cancelation process. To begin with, the following preparation is necessary.

First, by Corollary \ref{coro:power-pairing} with certain common factors extracted and powers of $(-1)$ modified, we have, for any $\ell\ge 1$,
\begin{align*}
	f\big((-1)^{\kappa}q^{ak},(-1)^{\kappa}q^{-ak+M\mu}\big)^{\ell}=\sum_{s=0}^{\ell-1}\sA_s \sF_{s},
\end{align*}
where each $\sF_{\star}$ is a series in $q^M$, and
\begin{align}\label{eq:sA0}
	\sA_0 = f\big((-1)^{\kappa \ell}q^{ak\ell},(-1)^{\kappa \ell}q^{-ak\ell+M\mu \ell}\big),
\end{align}
and for $1\le s\le \ell-1$,
\begin{align*}
	\sA_s&=q^{aks} f\big((-1)^{\kappa \ell}q^{ak\ell+M\mu s},(-1)^{\kappa \ell}q^{-ak\ell+M\mu (\ell-s)}\big)\notag\\
	&\quad+(-1)^{\kappa \ell}q^{ak(\ell-s)} f\big((-1)^{\kappa \ell}q^{ak\ell+M\mu (\ell-s)},(-1)^{\kappa \ell}q^{-ak\ell+M\mu s}\big)\notag\\
	&=:\sA_{s,I}+(-1)^{\kappa \ell}\sA_{s,I\!I}.
\end{align*}
More generally, we consider
\begin{align}
	\sA_{I}&:=q^{a\xi k} f\big((-1)^{\kappa \ell}q^{ak\ell+M\mu \xi},(-1)^{\kappa \ell}q^{-ak\ell+M\mu (\ell-\xi)}\big),\label{eq:sAI}\\
	\sA_{I\!I}&:=q^{a(\ell-\xi) k} f\big((-1)^{\kappa \ell}q^{ak\ell+M\mu (\ell-\xi)},(-1)^{\kappa \ell}q^{-ak\ell+M\mu \xi}\big),\label{eq:sAII}
\end{align}
for generic $\xi\in\mathbb{Z}$.

Also, by Corollary \ref{coro:f/f-expansion}, we have, for any $m\ge 1$,
\begin{align*}
	&\left(\frac{f\big({-q^{2k}},-q^{-2k+M\mu}\big)}{f\big((-1)^{\lambda}q^k,(-1)^{\lambda}q^{-k+M\mu}\big)}\right)^m \\
	&\quad= \sB_0^{(0)}\sG_0^{(0)}+\sum_{t=1}^{m-1}\sB_{t,1}^{(0)} \sG_{t,1}^{(0)}+\sum_{t=1}^{m-1}\sB_{t,2}^{(0)} \sG_{t,2}^{(0)} +\sum_{m_1=1}^{m-1}\sum_{t=0}^{m-1} \sB_t^{(m_1)} \sG_t^{(m_1)},
\end{align*}
where each $\sG_{\star}^{(\star)}$ is a series in $q^M$, and
\begin{align*}
	\sB_{0}^{(0)}&=f\big((-1)^{\lambda m}q^{3km+M\mu m},(-1)^{\lambda m}q^{-3km+2M\mu m}\big)\notag\\
	&\quad +(-1)^{(\lambda+1)m}q^{km} f\big((-1)^{\lambda m}q^{3km+2M\mu m},(-1)^{\lambda m}q^{-3km+M\mu m}\big)\\
	&=: \sB_{0,I}^{(0)}+(-1)^{(\lambda+1)m}\sB_{0,I\!I}^{(0)},
\end{align*}
and for $1\le t\le m-1$,
\begin{align*}
	\sB_{t,1}^{(0)}&=q^{3kt} f\big((-1)^{\lambda m}q^{3km+M\mu(m+3 t)},(-1)^{\lambda m}q^{-3km+M\mu(2m-3 t)}\big)\notag\\
	&\quad+(-1)^{(\lambda+1)m}q^{km-3kt} f\big((-1)^{\lambda m}q^{3km+M\mu(2m-3 t)},(-1)^{\lambda m}q^{-3km+M\mu(m+3 t)}\big)\\
	&=: \sB_{t,1,I}^{(0)}+(-1)^{(\lambda+1)m}\sB_{t,1,I\!I}^{(0)},\notag\\
	\sB_{t,2}^{(0)}&=q^{km-3k(m-t)} f\big((-1)^{\lambda m}q^{3km+M\mu(-m+3 t)},(-1)^{\lambda m}q^{-3km+M\mu(4m-3 t)}\big)\notag\\
	&\quad+(-1)^{(\lambda+1)m}q^{3k(m-t)} f\big((-1)^{\lambda m}q^{3km+M\mu(4m-3 t)},(-1)^{\lambda m}q^{-3km+M\mu(-m+3 t)}\big)\\
	&=: \sB_{t,2,I}^{(0)}+(-1)^{(\lambda+1)m}\sB_{t,2,I\!I}^{(0)},
\end{align*}
and for $1\le m_1\le m-1$ and $0\le t\le m-1$,
\begin{align*}
	\sB_{t}^{(m_1)}&=q^{k(m-m_1)+3kt}\notag\\
	&\quad\quad\times f\big((-1)^{\lambda m}q^{3km+M\mu(2m-m_1+3 t)},(-1)^{\lambda m}q^{-3km+M\mu(m+m_1-3t)}\big)\notag\\
	&\quad+(-1)^{(\lambda+1)m}q^{km_1-3kt}\notag\\
	&\quad\quad\times f\big((-1)^{\lambda m}q^{3km+M\mu(m+m_1-3t)},(-1)^{\lambda m}q^{-3km+M\mu(2m-m_1+3 t)}\big)\\
	&=: \sB_{t,I}^{(m_1)}+(-1)^{(\lambda+1)m}\sB_{t,I\!I}^{(m_1)}.
\end{align*}
We point out that these $\sB_{\star,I}^{(\star)}$ and $\sB_{\star,I\!I}^{(\star)}$ are special cases of
\begin{align}
	\sB_{I}&:=q^{\tau k} f\big((-1)^{\lambda m}q^{3km+M\mu(m+\tau)},(-1)^{\lambda m}q^{-3km+M\mu(2m-\tau)}\big),\label{eq:sBI}\\
	\sB_{I\!I}&:=q^{(m-\tau)k} f\big((-1)^{\lambda m}q^{3km+M\mu(2m-\tau)},(-1)^{\lambda m}q^{-3km+M\mu(m+\tau)}\big),\label{eq:sBII}
\end{align}
for generic $\tau\in\mathbb{Z}$.

With all parameters chosen in each case below, our generic target is to show the following by Theorems \ref{th:cancel-1} and \ref{th:cancel-2}:
\begin{align}\label{eq:A0B}
	\mathbf{H}_M\big(q^{\sigma}\sA_0\sB_{I}\big) =-(-1)^{(\lambda+1)m}\mathbf{H}_M\big(q^{\sigma}\sA_0\sB_{I\!I}\big),
\end{align}
and
\begin{align}\label{eq:AsB}
	\mathbf{H}_M\big(q^{\sigma}\sA_{I}\sB_{I}\big) =-(-1)^{\kappa \ell+(\lambda+1)m}\mathbf{H}_M\big(q^{\sigma}\sA_{I\!I}\sB_{I\!I}\big).
\end{align}
Upon setting $\tau\mapsto m-\tau$ in \eqref{eq:AsB}, we automatically have
\begin{align}\label{eq:AsB-2}
	\mathbf{H}_M\big(q^{\sigma}\sA_{I}\sB_{I\!I}\big) =-(-1)^{\kappa \ell+(\lambda+1)m}\mathbf{H}_M\big(q^{\sigma}\sA_{I\!I}\sB_{I}\big).
\end{align}
Once the above relations are established, it is safe to conclude by our pairing that
\begin{align}
	\mathbf{H}_{M}\Bigg(q^{\sigma}f\big((-1)^{\kappa}q^{ak},(-1)^{\kappa}q^{-ak+ M\mu}\big)^{\ell}\bigg(\frac{f\big({-q^{2k}},-q^{-2k+ M\mu}\big)}{f\big((-1)^{\lambda}q^{k},(-1)^{\lambda}q^{-k+ M\mu}\big)}\bigg)^{m}\Bigg)=0.
\end{align}

\subsubsection{Equation \eqref{eq:I-1-e-o}}

Recall that $(\ell,m)\mapsto (2\ell,2m+1)$ and $a=1$. Also, $M=2\ell+6m+3$ and $\sigma=-(2\ell+4m+2)k$. Further, $\kappa\in\{0,1\}$, $\lambda\in\{0,1\}$ and $k$ is such that $\gcd(k,M)=1$.

Notice that the $\ell=0$ case is shown by Corollary \ref{coro:only-quotient-power}. Below, we assume that $\ell\ge 1$.

In Theorems \ref{th:cancel-1} and \ref{th:cancel-2}, we set
\begin{align*}
	\begin{array}{rclp{3cm}rcl}
		\kappa & \mapsto & (2\ell)\kappa && \lambda & \mapsto & (2m+1)\lambda\\
		u & \mapsto & (2\ell)k && v & \mapsto & 3(2m+1)k\\
		A & \mapsto & (2\ell)\mu && B & \mapsto & 3(2m+1)\mu\\
		M & \mapsto & \multicolumn{5}{l}{2\ell+6m+3}\\
	\end{array}
\end{align*}
Let
\begin{align*}
	d_0=\gcd\big(2\ell,3(2m+1)\big).
\end{align*}
Then
\begin{align*}
	d=\gcd(u,v)=d_0 k.
\end{align*}
Also, noticing that $M=(2\ell)+3(2m+1)$ and that $k$ is coprime to $M$, we have
\begin{align*}
	d^*&=\gcd(u,v,M)=d_0,\\
	d_u&=\gcd(u,M)=d_0,\\
	d_v&=\gcd(v,M)=d_0.
\end{align*}
We compute that
\begin{align*}
	\frac{Av\cdot dM}{d_u(Av^2+Bu^2)}&=1,\\
	\frac{Bu\cdot dM}{d_v(Av^2+Bu^2)}&=1.
\end{align*}
Thus, the second and third assumptions in \eqref{eq:assump-d*} and \eqref{eq:assump-2-d*} are satisfied.

\begin{itemize}[leftmargin=*,align=left,itemsep=5pt]
	\renewcommand{\labelitemi}{\scriptsize$\blacktriangleright$}
	
	\item
	\textit{Examining \eqref{eq:A0B}}. Let us keep in mind that we have made the substitutions in \eqref{eq:sA0}, \eqref{eq:sBI} and \eqref{eq:sBII}: $(\ell,m)\mapsto (2\ell,2m+1)$ and $a=1$. Also, $\sigma=-(2\ell+4m+2)k$. Then \eqref{eq:A0B} becomes
	\begin{align}\label{eq:A0B-Eq1}
		\mathbf{H}_M\big(q^{\sigma}\sA_0\sB_{I}\big) =-(-1)^{\lambda+1}\mathbf{H}_M\big(q^{\sigma}\sA_0\sB_{I\!I}\big).
	\end{align}
	In Theorem \ref{th:cancel-1}, we further set
	\begin{align*}
		\begin{array}{rclp{4cm}rcl}
			A' & \mapsto & 0 && B' & \mapsto & (2m+1+\tau)\mu\\
			w & \mapsto & \multicolumn{5}{l}{-(2\ell+4m+2)k+\tau k}\\
		\end{array}
	\end{align*}
	Then
	\begin{align*}
		\hw = -(2\ell+4m+2)k +(2m+1-\tau) k.
	\end{align*}
	So,
	\begin{align*}
		\sH(q)&=q^{\sigma}\sA_0\sB_{I},\\
		\hat{\sH}(q)&=q^{\sigma}\sA_0\sB_{I\!I}.
	\end{align*}
	Now we compute that
	\begin{align*}
		\frac{2B'v}{d^* B} = \frac{2(2m+1+\tau )k}{d^*}.
	\end{align*}
	If $d^* \nmid 2(2m+1+\tau )k$, then $d_0 \nmid (2m+1+\tau )k$. By noticing that $d_0 \mid (2\ell)$ and $d_0\mid 3(2m+1)$, we have
	\begin{align*}
		w&\equiv (2m+1+\tau )k \not\equiv 0 \pmod{d_0},\\
		\hw&\equiv -(2m+1+\tau )k \not\equiv 0 \pmod{d_0}.
	\end{align*}
	So $w$ and $\hw$ are nonmultiples of $d_0=d^*$. In this case we know from Theorem \ref{th:UMH}~(i) that
	\begin{align*}
		\mathbf{H}_M\big(\sH(q)\big) = \mathbf{H}_M\big(\hat{\sH}(q)\big) = 0,
	\end{align*}
	which gives
	\begin{align*}
		\mathbf{H}_M\big(q^{\sigma}\sA_0\sB_{I}\big) =-(-1)^{\lambda+1}\mathbf{H}_M\big(q^{\sigma}\sA_0\sB_{I\!I}\big)=0.
	\end{align*}
	Below, we assume that $d^* \mid 2(2m+1+\tau )k$. Then the first assumption in \eqref{eq:assump-d*} is satisfied. We may also assume that $w$ is a multiple of $d^*=d_0$ according to Theorem \ref{th:cancel-1}~(i). It is easily seen that $d^*k$ is a divisor of $\gcd(u,v,w)$. Thus, we solve the following stronger system
	\begin{align*}
		\left\{
		\begin{aligned}
			K&\equiv 1 \pmod{M/d^*},\\
			K&\equiv 0 \pmod{d/(d^*k)},
		\end{aligned}
		\right.
	\end{align*}
	and choose $K=1$. Finally, we compute that
	\begin{align*}
		\frac{(A-2A')v^2-(B-2B')uv-2Buw\cdot K}{Av^2+Bu^2}&=2,\\
		\frac{B(A-2A')uv+A(B-2B')v^2+2ABvw\cdot K}{B(Av^2+Bu^2)}&=-1.
	\end{align*}
	So the remaining assumptions in \eqref{eq:assump-d*} are satisfied. Finally, we compute that
	\begin{align*}
		\epsilon=(4\ell)\kappa+(2m+1)\lambda.
	\end{align*}
	Thus, by Theorem \ref{th:cancel-1}~(ii),
	\begin{align*}
		\mathbf{H}_M\big(\sH(q)\big) = (-1)^{\lambda}\mathbf{H}_M\big(\hat{\sH}(q)\big).
	\end{align*}
	It follows that
	\begin{align*}
		\mathbf{H}_M\big(q^{\sigma}\sA_0\sB_{I}\big)+(-1)^{\lambda+1}\mathbf{H}_M\big(q^{\sigma}\sA_0\sB_{I\!I}\big)&=\mathbf{H}_M\big(\sH(q)\big)+(-1)^{\lambda+1}\mathbf{H}_M\big(\hat{\sH}(q)\big)\\
		&=0.
	\end{align*}
	Thus, \eqref{eq:A0B-Eq1} is established.
	
	\item
	\textit{Examining \eqref{eq:AsB}}. Let us keep in mind that we have made the substitutions in \eqref{eq:sAI}, \eqref{eq:sAII}, \eqref{eq:sBI} and \eqref{eq:sBII}: $(\ell,m)\mapsto (2\ell,2m+1)$ and $a=1$. Also, $\sigma=-(2\ell+4m+2)k$. Then \eqref{eq:AsB} becomes
	\begin{align}\label{eq:AsB-Eq1}
		\mathbf{H}_M\big(q^{\sigma}\sA_{I}\sB_{I}\big) =-(-1)^{\lambda+1}\mathbf{H}_M\big(q^{\sigma}\sA_{I\!I}\sB_{I\!I}\big).
	\end{align}
	In Theorem \ref{th:cancel-2}, we further set
	\begin{align*}
		\begin{array}{rclp{4.5cm}rcl}
			A' & \mapsto & \xi \mu && B' & \mapsto & (2m+1+\tau)\mu\\
			w & \mapsto & \multicolumn{5}{l}{-(2\ell+4m+2)k+\xi k+\tau k}\\
		\end{array}
	\end{align*}
	Then
	\begin{align*}
		\chw = -(2\ell+4m+2)k +(2\ell-\xi )k +(2m+1-\tau) k.
	\end{align*}
	So,
	\begin{align*}
		\sH(q)&=q^{\sigma}\sA_{I}\sB_{I},\\
		\check{\sH}(q)&=q^{\sigma}\sA_{I\!I}\sB_{I\!I}.
	\end{align*}
	Now we compute that
	\begin{align*}
		\frac{2(A'Bu+AB'v)}{d^*AB} = \frac{2(2m+1+\xi +\tau )k}{d^*}.
	\end{align*}
	If $d^* \nmid 2(2m+1+\xi +\tau )k$, then $d_0 \nmid (2m+1+\xi +\tau )k$, and thus,
	\begin{align*}
		w&\equiv (2m+1+\xi +\tau )k \not\equiv 0 \pmod{d_0},\\
		\chw&\equiv -(2m+1+\xi +\tau )k \not\equiv 0 \pmod{d_0},
	\end{align*}
	implying from Theorem \ref{th:UMH}~(i) that
	\begin{align*}
		\mathbf{H}_M\big(q^{\sigma}\sA_{I}\sB_{I}\big) =-(-1)^{\lambda+1}\mathbf{H}_M\big(q^{\sigma}\sA_{I\!I}\sB_{I\!I}\big)=0.
	\end{align*}
	Below, we assume that $d^* \mid 2(2m+1+\xi +\tau )k$. Then the first assumption in \eqref{eq:assump-2-d*} is satisfied. We may also assume that $w$ is a multiple of $d^*=d_0$ according to Theorem \ref{th:cancel-2}~(i). With the choice of $K=1$, we compute that
	\begin{align*}
		\frac{B(A-2A')u^2+A(B-2B')uv+2ABuw\cdot K}{A(Av^2+Bu^2)}&=-1,\\
		\frac{B(A-2A')uv+A(B-2B')v^2+2ABvw\cdot K}{B(Av^2+Bu^2)}&=-1.
	\end{align*}
	So the remaining assumptions in \eqref{eq:assump-2-d*} are satisfied. Finally, we compute that
	\begin{align*}
		\varepsilon=(2\ell)\kappa+(2m+1)\lambda.
	\end{align*}
	Thus, by Theorem \ref{th:cancel-2}~(ii),
	\begin{align*}
		\mathbf{H}_M\big(\sH(q)\big) = (-1)^{\lambda}\mathbf{H}_M\big(\check{\sH}(q)\big).
	\end{align*}
	It follows that
	\begin{align*}
		\mathbf{H}_M\big(q^{\sigma}\sA_{I}\sB_{I}\big)+(-1)^{\lambda+1}\mathbf{H}_M\big(q^{\sigma}\sA_{I\!I}\sB_{I\!I}\big)&=\mathbf{H}_M\big(\sH(q)\big)+(-1)^{\lambda+1}\mathbf{H}_M\big(\check{\sH}(q)\big)\\
		&=0.
	\end{align*}
	Thus, \eqref{eq:AsB-Eq1} is established.
	
\end{itemize}

\subsubsection{Equation \eqref{eq:I-2-e-o}}

Recall that $(\ell,m)\mapsto (2\ell,2m+1)$ and $a=2$. Also, $M=8\ell+6m+3$ and $\sigma=-(6\ell+4m+2)k$. Further, $\kappa\in\{0,1\}$, $\lambda\in\{0,1\}$ and $k$ is such that $\gcd(k,M)=1$.

Notice that the $\ell=0$ case is shown by Corollary \ref{coro:only-quotient-power}. Below, we assume that $\ell\ge 1$.

In Theorems \ref{th:cancel-1} and \ref{th:cancel-2}, we set
\begin{align*}
\begin{array}{rclp{3cm}rcl}
\kappa & \mapsto & (2\ell)\kappa && \lambda & \mapsto & (2m+1)\lambda\\
u & \mapsto & 2(2\ell)k && v & \mapsto & 3(2m+1)k\\
A & \mapsto & (2\ell)\mu && B & \mapsto & 3(2m+1)\mu\\
M & \mapsto & \multicolumn{5}{l}{8\ell+6m+3}\\
\end{array}
\end{align*}
Let
\begin{align*}
d_0=\gcd\big(2(2\ell),3(2m+1)\big).
\end{align*}
Then
\begin{align*}
d=\gcd(u,v)=d_0 k.
\end{align*}
Also, we observe that $d_0$ must be odd since $d_0\mid 3(2m+1)$. This implies that $d_0\mid (2\ell)$. Finally, noticing that $M=2\cdot 2(2\ell)+3(2m+1)$ and that $k$ is coprime to $M$, we have
\begin{align*}
d^*&=\gcd(u,v,M)=d_0,\\
d_u&=\gcd(u,M)=d_0,\\
d_v&=\gcd(v,M)=d_0.
\end{align*}
We compute that
\begin{align*}
\frac{Av\cdot dM}{d_u(Av^2+Bu^2)}&=1,\\
\frac{Bu\cdot dM}{d_v(Av^2+Bu^2)}&=2.
\end{align*}
Thus, the second and third assumptions in \eqref{eq:assump-d*} and \eqref{eq:assump-2-d*} are satisfied.

\begin{itemize}[leftmargin=*,align=left,itemsep=5pt]
	\renewcommand{\labelitemi}{\scriptsize$\blacktriangleright$}
	
	\item
	\textit{Examining \eqref{eq:A0B}}. Let us keep in mind that we have made the substitutions in \eqref{eq:sA0}, \eqref{eq:sBI} and \eqref{eq:sBII}: $(\ell,m)\mapsto (2\ell,2m+1)$ and $a=2$. Also, $\sigma=-(6\ell+4m+2)k$. Then \eqref{eq:A0B} becomes
	\begin{align}\label{eq:A0B-Eq2}
	H_M\big(q^{\sigma}\sA_0\sB_{I}\big) =-(-1)^{\lambda+1}H_M\big(q^{\sigma}\sA_0\sB_{I\!I}\big).
	\end{align}
	In Theorem \ref{th:cancel-1}, we further set
	\begin{align*}
	\begin{array}{rclp{4cm}rcl}
	A' & \mapsto & 0 && B' & \mapsto & (2m+1+\tau)\mu\\
	w & \mapsto & \multicolumn{5}{l}{-(6\ell+4m+2)k+\tau k}\\
	\end{array}
	\end{align*}
	Then
	\begin{align*}
	\hw = -(6\ell+4m+2)k +(2m+1-\tau) k.
	\end{align*}
	So,
	\begin{align*}
	\sH(q)&=q^{\sigma}\sA_0\sB_{I},\\
	\hat{\sH}(q)&=q^{\sigma}\sA_0\sB_{I\!I}.
	\end{align*}
	Now we compute that
	\begin{align*}
	\frac{2B'v}{d^* B} = \frac{2(2m+1+\tau )k}{d^*}.
	\end{align*}
	If $d^* \nmid 2(2m+1+\tau )k$, then $d_0 \nmid (2m+1+\tau )k$. By noticing that $d_0 \mid (2\ell)$ and $d_0\mid 3(2m+1)$, we have
	\begin{align*}
	w&\equiv (2m+1+\tau )k \not\equiv 0 \pmod{d_0},\\
	\hw&\equiv -(2m+1+\tau )k \not\equiv 0 \pmod{d_0}.
	\end{align*}
	So $w$ and $\hw$ are nonmultiples of $d_0=d^*$. In this case we know from Theorem \ref{th:UMH}~(i) that
	\begin{align*}
	H_M\big(\sH(q)\big) = H_M\big(\hat{\sH}(q)\big) = 0,
	\end{align*}
	which gives
	\begin{align*}
	H_M\big(q^{\sigma}\sA_0\sB_{I}\big) =-(-1)^{\lambda+1}H_M\big(q^{\sigma}\sA_0\sB_{I\!I}\big)=0.
	\end{align*}
	Below, we assume that $d^* \mid 2(2m+1+\tau )k$. Then the first assumption in \eqref{eq:assump-d*} is satisfied. We may also assume that $w$ is a multiple of $d^*=d_0$ according to Theorem \ref{th:cancel-1}~(i). It is easily seen that $d^*k$ is a divisor of $\gcd(u,v,w)$. Thus, we solve the following stronger system
	\begin{align*}
	\left\{
	\begin{aligned}
	K&\equiv 1 \pmod{M/d^*},\\
	K&\equiv 0 \pmod{d/(d^*k)},
	\end{aligned}
	\right.
	\end{align*}
	and choose $K=1$. Finally, we compute that
	\begin{align*}
	\frac{(A-2A')v^2-(B-2B')uv-2Buw\cdot K}{Av^2+Bu^2}&=3,\\
	\frac{B(A-2A')uv+A(B-2B')v^2+2ABvw\cdot K}{B(Av^2+Bu^2)}&=-1.
	\end{align*}
	So the remaining assumptions in \eqref{eq:assump-d*} are satisfied. Finally, we compute that
	\begin{align*}
	\epsilon=(6\ell)\kappa+(2m+1)\lambda.
	\end{align*}
	Thus, by Theorem \ref{th:cancel-1}~(ii),
	\begin{align*}
	H_M\big(\sH(q)\big) = (-1)^{\lambda}H_M\big(\hat{\sH}(q)\big).
	\end{align*}
	It follows that
	\begin{align*}
	H_M\big(q^{\sigma}\sA_0\sB_{I}\big)+(-1)^{\lambda+1}H_M\big(q^{\sigma}\sA_0\sB_{I\!I}\big)&=H_M\big(\sH(q)\big)+(-1)^{\lambda+1}H_M\big(\hat{\sH}(q)\big)\\
	&=0.
	\end{align*}
	Thus, \eqref{eq:A0B-Eq2} is established.
	
	\item
	\textit{Examining \eqref{eq:AsB}}. Let us keep in mind that we have made the substitutions in \eqref{eq:sAI}, \eqref{eq:sAII}, \eqref{eq:sBI} and \eqref{eq:sBII}: $(\ell,m)\mapsto (2\ell,2m+1)$ and $a=2$. Also, $\sigma=-(6\ell+4m+2)k$. Then \eqref{eq:AsB} becomes
	\begin{align}\label{eq:AsB-Eq2}
	H_M\big(q^{\sigma}\sA_{I}\sB_{I}\big) =-(-1)^{\lambda+1}H_M\big(q^{\sigma}\sA_{I\!I}\sB_{I\!I}\big).
	\end{align}
	In Theorem \ref{th:cancel-2}, we further set
	\begin{align*}
	\begin{array}{rclp{4cm}rcl}
	A' & \mapsto & \xi \mu && B' & \mapsto & (2m+1+\tau)\mu\\
	w & \mapsto & \multicolumn{5}{l}{-(6\ell+4m+2)k+2\xi k+\tau k}\\
	\end{array}
	\end{align*}
	Then
	\begin{align*}
	\chw = -(6\ell+4m+2)k +2(2\ell-\xi )k +(2m+1-\tau) k.
	\end{align*}
	So,
	\begin{align*}
	\sH(q)&=q^{\sigma}\sA_{I}\sB_{I},\\
	\check{\sH}(q)&=q^{\sigma}\sA_{I\!I}\sB_{I\!I}.
	\end{align*}
	Now we compute that
	\begin{align*}
	\frac{2(A'Bu+AB'v)}{d^*AB} = \frac{2(2m+1+2\xi +\tau )k}{d^*}.
	\end{align*}
	If $d^* \nmid 2(2m+1+2\xi +\tau )k$, then $d_0 \nmid (2m+1+2\xi +\tau )k$, and thus,
	\begin{align*}
	w&\equiv (2m+1+2\xi +\tau )k \not\equiv 0 \pmod{d_0},\\
	\chw&\equiv -(2m+1+2\xi +\tau )k \not\equiv 0 \pmod{d_0},
	\end{align*}
	implying from Theorem \ref{th:UMH}~(i) that
	\begin{align*}
	H_M\big(q^{\sigma}\sA_{I}\sB_{I}\big) =-(-1)^{\lambda+1}H_M\big(q^{\sigma}\sA_{I\!I}\sB_{I\!I}\big)=0.
	\end{align*}
	Below, we assume that $d^* \mid 2(2m+1+2\xi +\tau )k$. Then the first assumption in \eqref{eq:assump-2-d*} is satisfied. We may also assume that $w$ is a multiple of $d^*=d_0$ according to Theorem \ref{th:cancel-2}~(i). With the choice of $K=1$, we compute that
	\begin{align*}
	\frac{B(A-2A')u^2+A(B-2B')uv+2ABuw\cdot K}{A(Av^2+Bu^2)}&=-2,\\
	\frac{B(A-2A')uv+A(B-2B')v^2+2ABvw\cdot K}{B(Av^2+Bu^2)}&=-1.
	\end{align*}
	So the remaining assumptions in \eqref{eq:assump-2-d*} are satisfied. Finally, we compute that
	\begin{align*}
	\varepsilon=(4\ell)\kappa+(2m+1)\lambda.
	\end{align*}
	Thus, by Theorem \ref{th:cancel-2}~(ii),
	\begin{align*}
	H_M\big(\sH(q)\big) = (-1)^{\lambda}H_M\big(\check{\sH}(q)\big).
	\end{align*}
	It follows that
	\begin{align*}
	H_M\big(q^{\sigma}\sA_{I}\sB_{I}\big)+(-1)^{\lambda+1}H_M\big(q^{\sigma}\sA_{I\!I}\sB_{I\!I}\big)&=H_M\big(\sH(q)\big)+(-1)^{\lambda+1}H_M\big(\check{\sH}(q)\big)\\
	&=0.
	\end{align*}
	Thus, \eqref{eq:AsB-Eq2} is established.
	
\end{itemize}

\subsubsection{Equation \eqref{eq:I-2-o-e}}

Recall that $(\ell,m)\mapsto (2\ell+1,4m+2)$ and $a=2$. Also, $M=4\ell+6m+5$ and $\sigma=-(2\ell+2m+2)k$. Further, $\kappa\in\{1\}$, $\lambda\in\{0,1\}$ and $k$ is such that $\gcd(k,M)=1$.

In Theorems \ref{th:cancel-1} and \ref{th:cancel-2}, we set
\begin{align*}
\begin{array}{rclp{3cm}rcl}
\kappa & \mapsto & (2\ell+1)\kappa && \lambda & \mapsto & (4m+2)\lambda\\
u & \mapsto & 2(2\ell+1)k && v & \mapsto & 3(4m+2)k\\
A & \mapsto & (2\ell+1)\mu && B & \mapsto & 3(4m+2)\mu\\
M & \mapsto & \multicolumn{5}{l}{4\ell+6m+5}\\
\end{array}
\end{align*}
Let
\begin{align*}
d_0=\gcd\big(2(2\ell+1),3(4m+2)\big).
\end{align*}
We observe that $d_0$ is of the form
$$d_0=2\delta_0$$
with $\delta_0$ odd. Also,
\begin{align*}
d=\gcd(u,v)=d_0 k=2\delta_0 k.
\end{align*}
Noticing that $M=2(2\ell+1)+\frac{1}{2}\cdot 3(4m+2)$ is odd, and that $k$ is coprime to $M$, we have
\begin{align*}
d^*&=\gcd(u,v,M)=\delta_0,\\
d_u&=\gcd(u,M)=\delta_0,\\
d_v&=\gcd(v,M)=\delta_0.
\end{align*}
We compute that
\begin{align*}
\frac{Av\cdot dM}{d_u(Av^2+Bu^2)}&=1,\\
\frac{Bu\cdot dM}{d_v(Av^2+Bu^2)}&=2.
\end{align*}
Thus, the second and third assumptions in \eqref{eq:assump-d*} and \eqref{eq:assump-2-d*} are satisfied.

\begin{itemize}[leftmargin=*,align=left,itemsep=5pt]
	\renewcommand{\labelitemi}{\scriptsize$\blacktriangleright$}
	
	\item
	\textit{Examining \eqref{eq:A0B}}. Let us keep in mind that we have made the substitutions in \eqref{eq:sA0}, \eqref{eq:sBI} and \eqref{eq:sBII}: $(\ell,m)\mapsto (2\ell+1,4m+2)$ and $a=2$. Also, $\sigma=-(2\ell+2m+2)k$. Then \eqref{eq:A0B} becomes
	\begin{align}\label{eq:A0B-Eq4}
	H_M\big(q^{\sigma}\sA_0\sB_{I}\big) =-H_M\big(q^{\sigma}\sA_0\sB_{I\!I}\big).
	\end{align}
	In Theorem \ref{th:cancel-1}, we further set
	\begin{align*}
	\begin{array}{rclp{4cm}rcl}
	A' & \mapsto & 0 && B' & \mapsto & (4m+2+\tau)\mu\\
	w & \mapsto & \multicolumn{5}{l}{-(2\ell+2m+2)k+\tau k}\\
	\end{array}
	\end{align*}
	Then
	\begin{align*}
	\hw = -(2\ell+2m+2)k +(4m+2-\tau) k.
	\end{align*}
	So,
	\begin{align*}
	\sH(q)&=q^{\sigma}\sA_0\sB_{I},\\
	\hat{\sH}(q)&=q^{\sigma}\sA_0\sB_{I\!I}.
	\end{align*}
	Now we compute that
	\begin{align*}
	\frac{2B'v}{d^* B} = \frac{2(4m+2+\tau )k}{d^*}.
	\end{align*}
	Assume that $d^* \nmid 2(4m+2+\tau )k$. Recall that $d^*=\delta_0$ with $\delta_0$ odd. Then, $\delta_0 \nmid (4m+2+\tau )k$. Also, by noticing that $d_0 \mid 2(2\ell+1)$ and $d_0\mid 3(4m+2)$, we have $\delta_0 \mid (2\ell+1)$ and $\delta_0 \mid 3(2m+1)$ since $d_0=2\delta_0$. Thus,
	\begin{align*}
	w&\equiv (4m+2+\tau )k \not\equiv 0 \pmod{\delta_0},\\
	\hw&\equiv -(4m+2+\tau )k \not\equiv 0 \pmod{\delta_0}.
	\end{align*}
	So $w$ and $\hw$ are nonmultiples of $\delta_0=d^*$. Now, we know from Theorem \ref{th:UMH}~(i) that
	\begin{align*}
	H_M\big(\sH(q)\big) = H_M\big(\hat{\sH}(q)\big) = 0,
	\end{align*}
	which gives
	\begin{align*}
	H_M\big(q^{\sigma}\sA_0\sB_{I}\big) =-H_M\big(q^{\sigma}\sA_0\sB_{I\!I}\big)=0.
	\end{align*}
	Below, we assume that $d^* \mid 2(4m+2+\tau )k$. Then the first assumption in \eqref{eq:assump-d*} is satisfied. We may also assume that $w$ is a multiple of $d^*=\delta_0$ according to Theorem \ref{th:cancel-1}~(i). It is easily seen that $d^*k$ is a divisor of $\gcd(u,v,w)$. Thus, we solve the following stronger system
	\begin{align*}
	\left\{
	\begin{aligned}
	K&\equiv 1 \pmod{M/d^*},\\
	K&\equiv 0 \pmod{d/(d^*k)},
	\end{aligned}
	\right.
	\end{align*}
	and choose $K=M+1$ by recalling that $M$ is odd. Finally, we compute that
	\begin{align*}
	\frac{(A-2A')v^2-(B-2B')uv-2Buw\cdot K}{Av^2+Bu^2}&=4\ell+4m+5-2\tau,\\
	\frac{B(A-2A')uv+A(B-2B')v^2+2ABvw\cdot K}{B(Av^2+Bu^2)}&=-2\ell-2m-2+\tau.
	\end{align*}
	So the remaining assumptions in \eqref{eq:assump-d*} are satisfied. Finally, we compute that
	\begin{align*}
	\epsilon=(2\ell+1)(4\ell+4m+5-2\tau)\kappa+(4m+2)(2\ell+2m+2-\tau)\lambda.
	\end{align*}
	Thus, by Theorem \ref{th:cancel-1}~(ii),
	\begin{align*}
	H_M\big(\sH(q)\big) = (-1)^{\kappa}H_M\big(\hat{\sH}(q)\big).
	\end{align*}
	It follows from $\kappa\in\{1\}$ that
	\begin{align*}
	H_M\big(q^{\sigma}\sA_0\sB_{I}\big)+H_M\big(q^{\sigma}\sA_0\sB_{I\!I}\big)&=H_M\big(\sH(q)\big)+H_M\big(\hat{\sH}(q)\big)\\
	&=0.
	\end{align*}
	Thus, \eqref{eq:A0B-Eq4} is established.
	
	\item
	\textit{Examining \eqref{eq:AsB}}. Let us keep in mind that we have made the substitutions in \eqref{eq:sAI}, \eqref{eq:sAII}, \eqref{eq:sBI} and \eqref{eq:sBII}: $(\ell,m)\mapsto (2\ell+1,4m+2)$ and $a=2$. Also, $\sigma=-(2\ell+2m+2)k$. Then \eqref{eq:AsB} becomes
	\begin{align}\label{eq:AsB-Eq4}
	H_M\big(q^{\sigma}\sA_{I}\sB_{I}\big) =-(-1)^{\kappa}H_M\big(q^{\sigma}\sA_{I\!I}\sB_{I\!I}\big).
	\end{align}
	In Theorem \ref{th:cancel-2}, we further set
	\begin{align*}
	\begin{array}{rclp{4cm}rcl}
	A' & \mapsto & \xi \mu && B' & \mapsto & (4m+2+\tau)\mu\\
	w & \mapsto & \multicolumn{5}{l}{-(2\ell+2m+2)k+2\xi k+\tau k}\\
	\end{array}
	\end{align*}
	Then
	\begin{align*}
	\chw = -(2\ell+2m+2)k +2(2\ell+1-\xi )k +(4m+2-\tau) k.
	\end{align*}
	So,
	\begin{align*}
	\sH(q)&=q^{\sigma}\sA_{I}\sB_{I},\\
	\check{\sH}(q)&=q^{\sigma}\sA_{I\!I}\sB_{I\!I}.
	\end{align*}
	Now we compute that
	\begin{align*}
	\frac{2(A'Bu+AB'v)}{d^*AB} = \frac{2(4m+2+2\xi +\tau )k}{d^*}.
	\end{align*}
	If $d^* \nmid 2(4m+2+2\xi +\tau )k$, then $\delta_0 \nmid (4m+2+2\xi +\tau )k$, and thus,
	\begin{align*}
	w&\equiv (4m+2+2\xi +\tau )k \not\equiv 0 \pmod{\delta_0},\\
	\chw&\equiv -(4m+2+2\xi +\tau )k \not\equiv 0 \pmod{\delta_0},
	\end{align*}
	implying from Theorem \ref{th:UMH}~(i) that
	\begin{align*}
	H_M\big(q^{\sigma}\sA_{I}\sB_{I}\big) =-(-1)^{\kappa}H_M\big(q^{\sigma}\sA_{I\!I}\sB_{I\!I}\big)=0.
	\end{align*}
	Below, we assume that $d^* \mid 2(4m+2+2\xi +\tau )k$. Then the first assumption in \eqref{eq:assump-2-d*} is satisfied. We may also assume that $w$ is a multiple of $d^*=\delta_0$ according to Theorem \ref{th:cancel-2}~(i). With the choice of $K=M+1$, we compute that
	\begin{align*}
	\frac{B(A-2A')u^2+A(B-2B')uv+2ABuw\cdot K}{A(Av^2+Bu^2)}&=-4\ell-4m-4+4\xi+2\tau,\\
	\frac{B(A-2A')uv+A(B-2B')v^2+2ABvw\cdot K}{B(Av^2+Bu^2)}&=-2\ell-2m-2+2\xi+\tau.
	\end{align*}
	So the remaining assumptions in \eqref{eq:assump-2-d*} are satisfied. Finally, we compute that
	\begin{align*}
	\varepsilon=2(2\ell+1)(2\ell+2m+2-2\xi-\tau)\kappa+2(2m+1)(2\ell+2m+2-2\xi-\tau)\lambda.
	\end{align*}
	Thus, by Theorem \ref{th:cancel-2}~(iii),
	\begin{align*}
	H_M\big(\sH(q)\big) = H_M\big(\check{\sH}(q)\big).
	\end{align*}
	It follows from $\kappa\in\{1\}$ that
	\begin{align*}
	H_M\big(q^{\sigma}\sA_{I}\sB_{I}\big)+(-1)^{\kappa}H_M\big(q^{\sigma}\sA_{I\!I}\sB_{I\!I}\big)&=H_M\big(\sH(q)\big)+(-1)^{\kappa}H_M\big(\check{\sH}(q)\big)\\
	&=0.
	\end{align*}
	Thus, \eqref{eq:AsB-Eq4} is established.
	
\end{itemize}

\subsection{Type I --- Theorem \ref{th:Type-I.2}}

This section deals with the coefficient-vanishing results in Theorem \ref{th:Type-I.2}. Our proof still relies on the pairing-and-cancelation process but we also need to show that one particular term reduces to zero after applying the $\mathbf{H}$-operator.

We first deduce from Corollary \ref{coro:power-pairing} with certain common factors extracted and powers of $(-1)$ modified that, for any $\ell\ge 1$,
\begin{align*}
	f\big((-1)^{\kappa}q^{ak},(-1)^{\kappa}q^{-ak+M\mu}\big)^{\ell}=\sum_{s=0}^{\ell-1}\sA_s \sF_{s},
\end{align*}
where each $\sF_{\star}$ is a series in $q^M$, and
\begin{align}\label{eq:sA0-Type-I.2}
	\sA_0 = f\big((-1)^{\kappa \ell}q^{ak\ell},(-1)^{\kappa \ell}q^{-ak\ell+M\mu \ell}\big),
\end{align}
and for $1\le s\le \ell-1$,
\begin{align*}
	\sA_s&=q^{aks} f\big((-1)^{\kappa \ell}q^{ak\ell+M\mu s},(-1)^{\kappa \ell}q^{-ak\ell+M\mu (\ell-s)}\big)\notag\\
	&\quad+(-1)^{\kappa \ell}q^{ak(\ell-s)} f\big((-1)^{\kappa \ell}q^{ak\ell+M\mu (\ell-s)},(-1)^{\kappa \ell}q^{-ak\ell+M\mu s}\big)\notag\\
	&=:\sA_{s,I}+(-1)^{\kappa \ell}\sA_{s,I\!I}.
\end{align*}
More generally, we consider
\begin{align}
	\sA_{I}&:=q^{a\xi k} f\big((-1)^{\kappa \ell}q^{ak\ell+M\mu \xi},(-1)^{\kappa \ell}q^{-ak\ell+M\mu (\ell-\xi)}\big),\label{eq:sAI-Type-I.2}\\
	\sA_{I\!I}&:=q^{a(\ell-\xi) k} f\big((-1)^{\kappa \ell}q^{ak\ell+M\mu (\ell-\xi)},(-1)^{\kappa \ell}q^{-ak\ell+M\mu \xi}\big),\label{eq:sAII-Type-I.2}
\end{align}
for generic $\xi\in\mathbb{Z}$.

Also, by Corollary \ref{coro:power-pairing-A'-A'}, we write for any $m\ge 1$,
\begin{align*}
	f\big((-1)^{\lambda}q^{bk+M\mu},(-1)^{\lambda}q^{-bk+M\mu}\big)^{m}= \sB_0\sG_0+\sum_{t=1}^{m-1}\sB_{t,1} \sG_{t,1}+\sum_{t=1}^{m-1}\sB_{t,2} \sG_{t,2},
\end{align*}
where each $\sG_{\star}$ is a series in $q^M$, and
\begin{align}\label{eq:sB0-Type-I.2}
	\sB_{0}=f\big((-1)^{\lambda m}q^{bkm+M\mu m},(-1)^{\lambda m}q^{-bkm+M\mu m}\big),
\end{align}
and for $1\le t\le m-1$,
\begin{align*}
	\sB_{t,1}&=q^{bkt} f\big((-1)^{\lambda m}q^{bkm+M\mu(m+2 t)},(-1)^{\lambda m}q^{-bkm+M\mu(m-2 t)}\big)\notag\\
	&\quad+q^{-bkt} f\big((-1)^{\lambda m}q^{bkm+M\mu(m-2 t)},(-1)^{\lambda m}q^{-bkm+M\mu(m+2 t)}\big)\\
	&=: \sB_{t,1,I}+\sB_{t,1,I\!I},\notag\\
	\sB_{t,2}&=q^{bk(m-t)} f\big((-1)^{\lambda m}q^{bkm+M\mu(3m-2 t)},(-1)^{\lambda m}q^{-bkm+M\mu(-m+2 t)}\big)\notag\\
	&\quad+q^{-bk(m-t)} f\big((-1)^{\lambda m}q^{bkm+M\mu(-m+2 t)},(-1)^{\lambda m}q^{-bkm+M\mu(3m-2 t)}\big)\\
	&=: \sB_{t,2,I}+\sB_{t,2,I\!I}.
\end{align*}
We point out that these $\sB_{\star,I}$ and $\sB_{\star,I\!I}$ are special cases of
\begin{align}
	\sB_{I}&:=q^{b\tau k} f\big((-1)^{\lambda m}q^{bkm+M\mu (m+2\tau)},(-1)^{\lambda m}q^{-bkm+M\mu (m-2\tau)}\big),\label{eq:sBI-Type-I.2}\\
	\sB_{I\!I}&:=q^{-b\tau k} f\big((-1)^{\lambda m}q^{bkm+M\mu (m-2\tau)},(-1)^{\lambda m}q^{-bkm+M\mu (m+2\tau)}\big),\label{eq:sBII-Type-I.2}
\end{align}
for generic $\tau\in\mathbb{Z}$.

With all parameters chosen in each case below, our generic target is to show the following by Corollary \ref{coro:UMH=0-J} and Theorems \ref{th:cancel-1} and \ref{th:cancel-2}:
\begin{align}\label{eq:A0B0-Type-I.2}
	\mathbf{H}_M\big(q^{\sigma}\sA_0\sB_{0}\big) =0,
\end{align}
\begin{align}\label{eq:A0B-Type-I.2}
	\mathbf{H}_M\big(q^{\sigma}\sA_0\sB_{I}\big) =-\mathbf{H}_M\big(q^{\sigma}\sA_0\sB_{I\!I}\big),
\end{align}
\begin{align}\label{eq:AB0-Type-I.2}
	\mathbf{H}_M\big(q^{\sigma}\sB_{0}\sA_{I}\big) =-(-1)^{\kappa \ell}\mathbf{H}_M\big(q^{\sigma}\sB_{0}\sA_{I\!I}\big),
\end{align}
and
\begin{align}\label{eq:AB-Type-I.2}
	\mathbf{H}_M\big(q^{\sigma}\sA_{I}\sB_{I}\big) =-(-1)^{\kappa \ell}\mathbf{H}_M\big(q^{\sigma}\sA_{I\!I}\sB_{I\!I}\big).
\end{align}
Upon setting $\tau\mapsto -\tau$ in \eqref{eq:AB-Type-I.2}, we automatically have
\begin{align}\label{eq:AB-2-Type-I.2}
	\mathbf{H}_M\big(q^{\sigma}\sA_{I}\sB_{I\!I}\big) =-(-1)^{\kappa \ell}\mathbf{H}_M\big(q^{\sigma}\sA_{I\!I}\sB_{I}\big).
\end{align}
Once the above relations are established, it is safe to conclude by our pairing that
\begin{align}
	\mathbf{H}_{M}\Big(q^{\sigma}f\big((-1)^{\kappa}q^{ak},(-1)^{\kappa}q^{-ak+ M\mu}\big)^{\ell}f\big((-1)^{\lambda}q^{bk+M\mu},(-1)^{\lambda}q^{-bk+M\mu}\big)^{m}\Big)=0.
\end{align}

\subsubsection{Equation \eqref{eq:I.2}}

Recall that $(\ell,m)\mapsto (2\ell+1,2m+1)$, $a=1$ and $b=1$. Also, $M=4\ell+2m+3$ and $\sigma=-(3\ell+m+2)k$. Further, $(\kappa,\lambda)\in\{(0,1),(1,0)\}$ and $k$ is such that $\gcd(k,M)=1$.

In Corollary \ref{coro:UMH=0-J} and Theorems \ref{th:cancel-1} and \ref{th:cancel-2}, we set
\begin{align*}
	\begin{array}{rclp{3cm}rcl}
		\kappa & \mapsto & (2\ell+1)\kappa && \lambda & \mapsto & (2m+1)\lambda\\
		u & \mapsto & (2\ell+1)k && v & \mapsto & (2m+1)k\\
		A & \mapsto & (2\ell+1)\mu && B & \mapsto & 2(2m+1)\mu\\
		M & \mapsto & \multicolumn{5}{l}{4\ell+2m+3}\\
	\end{array}
\end{align*}
Let
\begin{align*}
	d_0=\gcd\big(2\ell+1,2m+1\big).
\end{align*}
Then
\begin{align*}
	d=\gcd(u,v)=d_0 k.
\end{align*}
Noticing that $M=2(2\ell+1)+(2m+1)$, and that $k$ is coprime to $M$, we have
\begin{align*}
	d^*&=\gcd(u,v,M)=d_0,\\
	d_u&=\gcd(u,M)=d_0,\\
	d_v&=\gcd(v,M)=d_0.
\end{align*}
We compute that
\begin{align*}
	\frac{Av\cdot dM}{d_u(Av^2+Bu^2)}&=1,\\
	\frac{Bu\cdot dM}{d_v(Av^2+Bu^2)}&=2.
\end{align*}
Thus, the second and third assumptions in \eqref{eq:div-assump-coro}, \eqref{eq:assump-d*} and \eqref{eq:assump-2-d*} are satisfied.

\begin{itemize}[leftmargin=*,align=left,itemsep=5pt]
	\renewcommand{\labelitemi}{\scriptsize$\blacktriangleright$}
	
	\item 
	\textit{Examining \eqref{eq:A0B0-Type-I.2}}. Let us keep in mind that we have made the substitutions in \eqref{eq:sA0-Type-I.2} and \eqref{eq:sB0-Type-I.2}: $(\ell,m)\mapsto (2\ell+1,2m+1)$, $a=1$ and $b=1$. Also, $\sigma=-(3\ell+m+2)k$. We want to show \eqref{eq:A0B0-Type-I.2},
	\begin{align}\label{eq:A0B0-Type-I.2-Eq1}
		\mathbf{H}_M\big(q^{\sigma}\sA_0\sB_{0}\big) =0.
	\end{align}
	In Corollary \ref{coro:UMH=0-J}, we further set
	\begin{align*}
		\begin{array}{rclp{3cm}rcl}
			A' & \mapsto & 0 && B' & \mapsto & (2m+1)\mu\\
			w & \mapsto & \multicolumn{5}{l}{-(3\ell+m+2)k}\\
		\end{array}
	\end{align*}
	So,
	\begin{align*}
		\sH(q)=q^{\sigma}\sA_0\sB_0.
	\end{align*}
	We may further assume that $w$ is a multiple of $d^*$. Otherwise, we know from Theorem \ref{th:UMH}~(i) that
	\begin{align*}
		\mathbf{H}_M\big(\sH(q)\big) = 0,
	\end{align*}
	which gives
	\begin{align*}
		\mathbf{H}_M\big(q^{\sigma}\sA_0\sB_0\big) =0.
	\end{align*} 
	We further compute that
	\begin{align*}
		\frac{-v\kappa+u\lambda}{d}=-\frac{(2\ell+1)(2m+1)}{d_0}(\kappa-\lambda).
	\end{align*}
	The fact that $d_0=\gcd(2\ell+1,2m+1)$ then implies that $(2\ell+1)(2m+1)/d_0$ is an odd integer. Thus, \eqref{eq:div-assump-coro} is satisfied since $(\kappa,\lambda)\in\{(0,1),(1,0)\}$. Finally, we choose $J=0$ in \eqref{eq:div-assump-coro-J}. Then
	\begin{equation*}
		\scalebox{0.85}{%
			$
			\begin{aligned}
				\dfrac{2dMAv\cdot J-2dAvw-dAuv+dBu^2+2dA'uv-2dB'u^2-Auv^2-Bu^3}{2d(Av^2+Bu^2)}&=\frac{d_0-(2\ell+1)}{2d_0},\\
				\dfrac{2dMBu\cdot J-2dBuw+dAv^2-dBuv-2dA'v^2+2dB'uv+Av^3+Bu^2v}{2d(Av^2+Bu^2)}&=\frac{3d_0+(2m+1)}{2d_0}.
			\end{aligned}
			$}
	\end{equation*}
	Recall that $2\ell+1\equiv 2m+1\equiv d_0 \pmod{2d_0}$ since $d_0=\gcd(2\ell+1,2m+1)$ is odd. So \eqref{eq:div-assump-coro-J} is also satisfied. We conclude by Corollary \ref{coro:UMH=0-J} that
	\begin{align*}
		\mathbf{H}_M\big(\sH(q)\big) = 0.
	\end{align*}
	Thus, \eqref{eq:A0B0-Type-I.2-Eq1} is established.
	
	\item
	\textit{Examining \eqref{eq:A0B-Type-I.2}}. Let us keep in mind that we have made the substitutions in \eqref{eq:sA0-Type-I.2}, \eqref{eq:sBI-Type-I.2} and \eqref{eq:sBII-Type-I.2}: $(\ell,m)\mapsto (2\ell+1,2m+1)$, $a=1$ and $b=1$. Also, $\sigma=-(3\ell+m+2)k$. We want to show \eqref{eq:A0B-Type-I.2},
	\begin{align}\label{eq:A0B-Type-I.2-Eq2}
		\mathbf{H}_M\big(q^{\sigma}\sA_0\sB_{I}\big) =-\mathbf{H}_M\big(q^{\sigma}\sA_0\sB_{I\!I}\big).
	\end{align}
	In Theorem \ref{th:cancel-1}, we further set
	\begin{align*}
		\begin{array}{rclp{3.5cm}rcl}
			A' & \mapsto & 0 && B' & \mapsto & (2m+1+2\tau)\mu\\
			w & \mapsto & \multicolumn{5}{l}{-(3\ell+m+2)k+\tau k}\\
		\end{array}
	\end{align*}
	Then
	\begin{align*}
		\hw = -(3\ell+m+2)k -\tau k.
	\end{align*}
	So,
	\begin{align*}
		\sH(q)&=q^{\sigma}\sA_0\sB_{I},\\
		\hat{\sH}(q)&=q^{\sigma}\sA_0\sB_{I\!I}.
	\end{align*}
	Now we compute that
	\begin{align*}
		\frac{2B'v}{d^* B} = \frac{(2m+1+2\tau )k}{d^*}.
	\end{align*}
	If $d^* \nmid (2m+1+2\tau )k$, then $\tau k \not\equiv 0 \pmod{d_0}$, and thus,
	\begin{align*}
		w&=-\tfrac{3}{2} (2\ell+1)k-\tfrac{1}{2}(2m+1)k+\tau k\equiv \tau k \not\equiv 0 \pmod{d_0},\\
		\hw&=-\tfrac{3}{2} (2\ell+1)k-\tfrac{1}{2}(2m+1)k-\tau k\equiv -\tau k \not\equiv 0 \pmod{d_0}.
	\end{align*}
	So $w$ and $\hw$ are nonmultiples of $d_0=d^*$. In this case we know from Theorem \ref{th:UMH}~(i) that
	\begin{align*}
		\mathbf{H}_M\big(\sH(q)\big) = \mathbf{H}_M\big(\hat{\sH}(q)\big) = 0,
	\end{align*}
	which gives
	\begin{align*}
		\mathbf{H}_M\big(q^{\sigma}\sA_0\sB_{I}\big) =-\mathbf{H}_M\big(q^{\sigma}\sA_0\sB_{I\!I}\big)=0.
	\end{align*}
	Below, we assume that $d^* \mid (2m+1+2\tau )k$. Then the first assumption in \eqref{eq:assump-d*} is satisfied. We may also assume that $w$ is a multiple of $d^*=d_0$ according to Theorem \ref{th:cancel-1}~(i). It is easily seen that $d^*k$ is a divisor of $\gcd(u,v,w)$. Thus, we solve the following stronger system
	\begin{align*}
		\left\{
		\begin{aligned}
			K&\equiv 1 \pmod{M/d^*},\\
			K&\equiv 0 \pmod{d/(d^*k)},
		\end{aligned}
		\right.
	\end{align*}
	and choose $K=1$. Finally, we compute that
	\begin{align*}
		\frac{(A-2A')v^2-(B-2B')uv-2Buw\cdot K}{Av^2+Bu^2}&=3,\\
		\frac{B(A-2A')uv+A(B-2B')v^2+2ABvw\cdot K}{B(Av^2+Bu^2)}&=-1.
	\end{align*}
	So the remaining assumptions in \eqref{eq:assump-d*} are satisfied. Finally, we compute that
	\begin{align*}
		\epsilon=(6\ell+3)\kappa+(2m+1)\lambda.
	\end{align*}
	Thus, by Theorem \ref{th:cancel-1}~(ii),
	\begin{align*}
		\mathbf{H}_M\big(\sH(q)\big) = (-1)^{\kappa+\lambda}\mathbf{H}_M\big(\hat{\sH}(q)\big).
	\end{align*}
	It follows from $(\kappa,\lambda)\in\{(0,1),(1,0)\}$ that
	\begin{align*}
		\mathbf{H}_M\big(q^{\sigma}\sA_0\sB_{I}\big)+\mathbf{H}_M\big(q^{\sigma}\sA_0\sB_{I\!I}\big)&=\mathbf{H}_M\big(\sH(q)\big)+\mathbf{H}_M\big(\hat{\sH}(q)\big)\\
		&=0.
	\end{align*}
	Thus, \eqref{eq:A0B-Type-I.2-Eq2} is established.
	
	\item 
	\textit{Examining \eqref{eq:AB0-Type-I.2}}.
	Let us keep in mind that we have made the substitutions in \eqref{eq:sAI-Type-I.2}, \eqref{eq:sAII-Type-I.2} and \eqref{eq:sB0-Type-I.2}: $(\ell,m)\mapsto (2\ell+1,2m+1)$, $a=1$ and $b=1$. Also, $\sigma=-(3\ell+m+2)k$. Then \eqref{eq:AB0-Type-I.2} becomes
	\begin{align}\label{eq:AB0-Type-I.2-Eq2}
		\mathbf{H}_M\big(q^{\sigma}\sB_0\sA_{I}\big) =-(-1)^{\kappa}\mathbf{H}_M\big(q^{\sigma}\sB_0\sA_{I\!I}\big).
	\end{align}
	To make use of Theorem \ref{th:cancel-1}, we need to swap the choice of $(\kappa,u,A)$ and $(\lambda,v,B)$ in our initial setting. In other words, in Theorem \ref{th:cancel-1}, we set
	\begin{align*}
		\begin{array}{rclp{3cm}rcl}
			\kappa & \mapsto & (2m+1)\lambda && \lambda & \mapsto & (2\ell+1)\kappa\\
			u & \mapsto & (2m+1)k && v & \mapsto & (2\ell+1)k\\
			A & \mapsto & 2(2m+1)\mu && B & \mapsto & (2\ell+1)\mu\\
			A' & \mapsto & (2m+1)\mu && B' & \mapsto & \xi \mu\\
			M & \mapsto & \multicolumn{5}{l}{4\ell+2m+3}\\
			w & \mapsto & \multicolumn{5}{l}{-(3\ell+m+2)k+\xi k}\\
		\end{array}
	\end{align*}
	Then
	\begin{align*}
		\hw = -(3\ell+m+2)k +(2\ell+1-\xi) k.
	\end{align*}
	So,
	\begin{align*}
		\sH(q)&=q^{\sigma}\sB_0\sA_{I},\\
		\hat{\sH}(q)&=q^{\sigma}\sB_0\sA_{I\!I}.
	\end{align*}
	Now we compute that
	\begin{align*}
		\frac{2B'v}{d^* B} = \frac{2\xi k}{d^*}.
	\end{align*}
	If $d^* \nmid 2\xi k$, then $\xi k \not\equiv 0 \pmod{d_0}$, and thus,
	\begin{align*}
		w&\equiv \xi k \not\equiv 0 \pmod{d_0},\\
		\hw&\equiv -\xi k \not\equiv 0 \pmod{d_0},
	\end{align*}
	implying from Theorem \ref{th:UMH}~(i) that
	\begin{align*}
		\mathbf{H}_M\big(q^{\sigma}\sB_0\sA_{I}\big) =-(-1)^{\kappa}\mathbf{H}_M\big(q^{\sigma}\sB_0\sA_{I\!I}\big)=0.
	\end{align*}
	Below, we assume that $d^* \mid 2\xi k$. Then the first assumption in \eqref{eq:assump-d*} is satisfied. We may also assume that $w$ is a multiple of $d^*=d_0$ according to Theorem \ref{th:cancel-1}~(i). With the choice of $K=1$, we compute that
	\begin{align*}
		\frac{(A-2A')v^2-(B-2B')uv-2Buw\cdot K}{Av^2+Bu^2}&=1,\\
		\frac{B(A-2A')uv+A(B-2B')v^2+2ABvw\cdot K}{B(Av^2+Bu^2)}&=-2.
	\end{align*}
	So the remaining assumptions in \eqref{eq:assump-d*} are satisfied. Finally, we compute that
	\begin{align*}
		\epsilon=(4\ell+2)\kappa+(2m+1)\lambda.
	\end{align*}
	Thus, by Theorem \ref{th:cancel-1}~(ii),
	\begin{align*}
		\mathbf{H}_M\big(\sH(q)\big) = (-1)^{\lambda}\mathbf{H}_M\big(\hat{\sH}(q)\big).
	\end{align*}
	It follows from $(\kappa,\lambda)\in\{(0,1),(1,0)\}$ that
	\begin{align*}
		\mathbf{H}_M\big(q^{\sigma}\sB_0\sA_{I}\big)+(-1)^{\kappa}\mathbf{H}_M\big(q^{\sigma}\sB_0\sA_{I\!I}\big)&=\mathbf{H}_M\big(\sH(q)\big)+(-1)^{\kappa}\mathbf{H}_M\big(\hat{\sH}(q)\big)\\
		&=0.
	\end{align*}
	Thus, \eqref{eq:AB0-Type-I.2-Eq2} is established.
	
	\item
	\textit{Examining \eqref{eq:AB-Type-I.2}}. Let us keep in mind that we have made the substitutions in \eqref{eq:sAI-Type-I.2}, \eqref{eq:sAII-Type-I.2}, \eqref{eq:sBI-Type-I.2} and \eqref{eq:sBII-Type-I.2}: $(\ell,m)\mapsto (2\ell+1,2m+1)$, $a=1$ and $b=1$. Also, $\sigma=-(3\ell+m+2)k$. Then \eqref{eq:AB-Type-I.2} becomes
	\begin{align}\label{eq:AB-Type-I.2-Eq2}
		\mathbf{H}_M\big(q^{\sigma}\sA_{I}\sB_{I}\big) =-(-1)^{\kappa}\mathbf{H}_M\big(q^{\sigma}\sA_{I\!I}\sB_{I\!I}\big).
	\end{align}
	In Theorem \ref{th:cancel-2}, we further set
	\begin{align*}
		\begin{array}{rclp{4.5cm}rcl}
			A' & \mapsto & \xi \mu && B' & \mapsto & (2m+1+2\tau)\mu\\
			w & \mapsto & \multicolumn{5}{l}{-(3\ell+m+2)k+\xi k+\tau k}\\
		\end{array}
	\end{align*}
	Then
	\begin{align*}
		\chw = -(3\ell+m+2)k +(2\ell+1-\xi) k-\tau k.
	\end{align*}
	So,
	\begin{align*}
		\sH(q)&=q^{\sigma}\sA_{I}\sB_{I},\\
		\check{\sH}(q)&=q^{\sigma}\sA_{I\!I}\sB_{I\!I}.
	\end{align*}
	Now we compute that
	\begin{align*}
		\frac{2(A'Bu+AB'v)}{d^*AB} = \frac{(2m+1+2\xi +2\tau )k}{d^*}.
	\end{align*}
	Similarly, if $d^* \nmid (2m+1+2\xi +2\tau )k$, then $(\xi+\tau) k \not\equiv 0 \pmod{d_0}$, and thus,
	\begin{align*}
		w&\equiv (\xi+\tau) k \not\equiv 0 \pmod{d_0},\\
		\chw&\equiv -(\xi+\tau) k \not\equiv 0 \pmod{d_0},
	\end{align*}
	implying from Theorem \ref{th:UMH}~(i) that
	\begin{align*}
		\mathbf{H}_M\big(q^{\sigma}\sA_{I}\sB_{I}\big) =-(-1)^{\kappa}\mathbf{H}_M\big(q^{\sigma}\sA_{I\!I}\sB_{I\!I}\big)=0.
	\end{align*}
	Below, we assume that $d^* \mid (2m+1+2\xi +2\tau )k$. Then the first assumption in \eqref{eq:assump-2-d*} is satisfied. We may also assume that $w$ is a multiple of $d^*=d_0$ according to Theorem \ref{th:cancel-2}~(i). With the choice of $K=1$, we compute that
	\begin{align*}
		\frac{B(A-2A')u^2+A(B-2B')uv+2ABuw\cdot K}{A(Av^2+Bu^2)}&=-2,\\
		\frac{B(A-2A')uv+A(B-2B')v^2+2ABvw\cdot K}{B(Av^2+Bu^2)}&=-1.
	\end{align*}
	So the remaining assumptions in \eqref{eq:assump-2-d*} are satisfied. Finally, we compute that
	\begin{align*}
		\varepsilon=(4\ell+2)\kappa+(2m+1)\lambda.
	\end{align*}
	Thus, by Theorem \ref{th:cancel-2}~(ii),
	\begin{align*}
		\mathbf{H}_M\big(\sH(q)\big) = (-1)^{\lambda}\mathbf{H}_M\big(\check{\sH}(q)\big).
	\end{align*}
	It follows from $(\kappa,\lambda)\in\{(0,1),(1,0)\}$ that
	\begin{align*}
		\mathbf{H}_M\big(q^{\sigma}\sA_{I}\sB_{I}\big)+(-1)^{\kappa}\mathbf{H}_M\big(q^{\sigma}\sA_{I\!I}\sB_{I\!I}\big)&=\mathbf{H}_M\big(\sH(q)\big)+(-1)^{\kappa}\mathbf{H}_M\big(\check{\sH}(q)\big)\\
		&=0.
	\end{align*}
	Thus, \eqref{eq:AB-Type-I.2-Eq2} is established.
	
\end{itemize}

\subsubsection{Equation \eqref{eq:I.2-2}}

Recall that $(\ell,m)\mapsto (2\ell+1,2m+1)$, $a=1$ and $b=2$. Also, $M=2\ell+4m+3$ and $\sigma=-(2\ell+2m+2)k$. Further, $(\kappa,\lambda)\in\{(0,1),(1,1)\}$ and $k$ is such that $\gcd(k,M)=1$.

In Corollary \ref{coro:UMH=0-J} and Theorems \ref{th:cancel-1} and \ref{th:cancel-2}, we set
\begin{align*}
\begin{array}{rclp{3cm}rcl}
\kappa & \mapsto & (2\ell+1)\kappa && \lambda & \mapsto & (2m+1)\lambda\\
u & \mapsto & (2\ell+1)k && v & \mapsto & 2(2m+1)k\\
A & \mapsto & (2\ell+1)\mu && B & \mapsto & 2(2m+1)\mu\\
M & \mapsto & \multicolumn{5}{l}{2\ell+4m+3}\\
\end{array}
\end{align*}
Let
\begin{align*}
d_0=\gcd\big(2\ell+1,2(2m+1)\big)=\gcd\big(2\ell+1,2m+1\big).
\end{align*}
Then
\begin{align*}
d=\gcd(u,v)=d_0 k.
\end{align*}
Noticing that $M=(2\ell+1)+2(2m+1)$, and that $k$ is coprime to $M$, we have
\begin{align*}
d^*&=\gcd(u,v,M)=d_0,\\
d_u&=\gcd(u,M)=d_0,\\
d_v&=\gcd(v,M)=d_0.
\end{align*}
We compute that
\begin{align*}
\frac{Av\cdot dM}{d_u(Av^2+Bu^2)}&=1,\\
\frac{Bu\cdot dM}{d_v(Av^2+Bu^2)}&=1.
\end{align*}
Thus, the second and third assumptions in \eqref{eq:div-assump-coro}, \eqref{eq:assump-d*} and \eqref{eq:assump-2-d*} are satisfied.

\begin{itemize}[leftmargin=*,align=left,itemsep=5pt]
	\renewcommand{\labelitemi}{\scriptsize$\blacktriangleright$}
	
	\item 
	\textit{Examining \eqref{eq:A0B0-Type-I.2}}. Let us keep in mind that we have made the substitutions in \eqref{eq:sA0-Type-I.2} and \eqref{eq:sB0-Type-I.2}: $(\ell,m)\mapsto (2\ell+1,2m+1)$, $a=1$ and $b=2$. Also, $\sigma=-(2\ell+2m+2)k$. We want to show \eqref{eq:A0B0-Type-I.2},
	\begin{align}\label{eq:A0B0-Type-I.2-2-Eq1}
	H_M\big(q^{\sigma}\sA_0\sB_{0}\big) =0.
	\end{align}
	In Corollary \ref{coro:UMH=0-J}, we further set
	\begin{align*}
	\begin{array}{rclp{3cm}rcl}
	A' & \mapsto & 0 && B' & \mapsto & (2m+1)\mu\\
	w & \mapsto & \multicolumn{5}{l}{-(2\ell+2m+2)k}\\
	\end{array}
	\end{align*}
	So,
	\begin{align*}
	\sH(q)=q^{\sigma}\sA_0\sB_0.
	\end{align*}
	We may further assume that $w$ is a multiple of $d^*$. Otherwise, we know from Theorem \ref{th:UMH}~(i) that
	\begin{align*}
	H_M\big(\sH(q)\big) = 0,
	\end{align*}
	which gives
	\begin{align*}
	H_M\big(q^{\sigma}\sA_0\sB_0\big) =0.
	\end{align*} 
	We further compute that
	\begin{align*}
	\frac{-v\kappa+u\lambda}{d}=-\frac{(2\ell+1)(2m+1)}{d_0}(2\kappa-\lambda).
	\end{align*}
	The fact that $d_0=\gcd(2\ell+1,2m+1)$ then implies that $(2\ell+1)(2m+1)/d_0$ is an odd integer. Thus, \eqref{eq:div-assump-coro} is satisfied since $(\kappa,\lambda)\in\{(0,1),(1,1)\}$. Finally, we choose $J=0$ in \eqref{eq:div-assump-coro-J}. Then
	\begin{equation*}
	\scalebox{0.85}{%
		$
		\begin{aligned}
		\dfrac{2dMAv\cdot J-2dAvw-dAuv+dBu^2+2dA'uv-2dB'u^2-Auv^2-Bu^3}{2d(Av^2+Bu^2)}&=\frac{d_0-(2\ell+1)}{2d_0},\\
		\dfrac{2dMBu\cdot J-2dBuw+dAv^2-dBuv-2dA'v^2+2dB'uv+Av^3+Bu^2v}{2d(Av^2+Bu^2)}&=\frac{d_0+(2m+1)}{d_0}.
		\end{aligned}
		$}
	\end{equation*}
	Recall that $2\ell+1\equiv 2m+1\equiv d_0 \pmod{2d_0}$ since $d_0=\gcd(2\ell+1,2m+1)$ is odd. So \eqref{eq:div-assump-coro-J} is also satisfied. We conclude by Corollary \ref{coro:UMH=0-J} that
	\begin{align*}
	H_M\big(\sH(q)\big) = 0.
	\end{align*}
	Thus, \eqref{eq:A0B0-Type-I.2-2-Eq1} is established.
	
	\item
	\textit{Examining \eqref{eq:A0B-Type-I.2}}. Let us keep in mind that we have made the substitutions in \eqref{eq:sA0-Type-I.2}, \eqref{eq:sBI-Type-I.2} and \eqref{eq:sBII-Type-I.2}: $(\ell,m)\mapsto (2\ell+1,2m+1)$, $a=1$ and $b=2$. Also, $\sigma=-(2\ell+2m+2)k$. We want to show \eqref{eq:A0B-Type-I.2},
	\begin{align}\label{eq:A0B-Type-I.2-2-Eq2}
	H_M\big(q^{\sigma}\sA_0\sB_{I}\big) =-H_M\big(q^{\sigma}\sA_0\sB_{I\!I}\big).
	\end{align}
	In Theorem \ref{th:cancel-1}, we further set
	\begin{align*}
	\begin{array}{rclp{3.5cm}rcl}
	A' & \mapsto & 0 && B' & \mapsto & (2m+1+2\tau)\mu\\
	w & \mapsto & \multicolumn{5}{l}{-(2\ell+2m+2)k+2\tau k}\\
	\end{array}
	\end{align*}
	Then
	\begin{align*}
	\hw = -(2\ell+2m+2)k -2\tau k.
	\end{align*}
	So,
	\begin{align*}
	\sH(q)&=q^{\sigma}\sA_0\sB_{I},\\
	\hat{\sH}(q)&=q^{\sigma}\sA_0\sB_{I\!I}.
	\end{align*}
	Now we compute that
	\begin{align*}
	\frac{2B'v}{d^* B} = \frac{2(2m+1+2\tau )k}{d^*}.
	\end{align*}
	If $d^* \nmid 2(2m+1+2\tau )k$, then $2\tau k \not\equiv 0 \pmod{d_0}$, and thus,
	\begin{align*}
	w&=- (2\ell+1)k-(2m+1)k+2\tau k\equiv 2\tau k \not\equiv 0 \pmod{d_0},\\
	\hw&=- (2\ell+1)k-(2m+1)k-2\tau k\equiv -2\tau k \not\equiv 0 \pmod{d_0}.
	\end{align*}
	So $w$ and $\hw$ are nonmultiples of $d_0=d^*$. In this case we know from Theorem \ref{th:UMH}~(i) that
	\begin{align*}
	H_M\big(\sH(q)\big) = H_M\big(\hat{\sH}(q)\big) = 0,
	\end{align*}
	which gives
	\begin{align*}
	H_M\big(q^{\sigma}\sA_0\sB_{I}\big) =-H_M\big(q^{\sigma}\sA_0\sB_{I\!I}\big)=0.
	\end{align*}
	Below, we assume that $d^* \mid 2(2m+1+2\tau )k$. Then the first assumption in \eqref{eq:assump-d*} is satisfied. We may also assume that $w$ is a multiple of $d^*=d_0$ according to Theorem \ref{th:cancel-1}~(i). It is easily seen that $d^*k$ is a divisor of $\gcd(u,v,w)$. Thus, we solve the following stronger system
	\begin{align*}
	\left\{
	\begin{aligned}
	K&\equiv 1 \pmod{M/d^*},\\
	K&\equiv 0 \pmod{d/(d^*k)},
	\end{aligned}
	\right.
	\end{align*}
	and choose $K=1$. Finally, we compute that
	\begin{align*}
	\frac{(A-2A')v^2-(B-2B')uv-2Buw\cdot K}{Av^2+Bu^2}&=2,\\
	\frac{B(A-2A')uv+A(B-2B')v^2+2ABvw\cdot K}{B(Av^2+Bu^2)}&=-1.
	\end{align*}
	So the remaining assumptions in \eqref{eq:assump-d*} are satisfied. Finally, we compute that
	\begin{align*}
	\epsilon=(4\ell+2)\kappa+(2m+1)\lambda.
	\end{align*}
	Thus, by Theorem \ref{th:cancel-1}~(ii),
	\begin{align*}
	H_M\big(\sH(q)\big) = (-1)^{\lambda}H_M\big(\hat{\sH}(q)\big).
	\end{align*}
	It follows from $(\kappa,\lambda)\in\{(0,1),(1,1)\}$ that
	\begin{align*}
	H_M\big(q^{\sigma}\sA_0\sB_{I}\big)+H_M\big(q^{\sigma}\sA_0\sB_{I\!I}\big)&=H_M\big(\sH(q)\big)+H_M\big(\hat{\sH}(q)\big)\\
	&=0.
	\end{align*}
	Thus, \eqref{eq:A0B-Type-I.2-2-Eq2} is established.
	
	\item 
	\textit{Examining \eqref{eq:AB0-Type-I.2}}.
	Let us keep in mind that we have made the substitutions in \eqref{eq:sAI-Type-I.2}, \eqref{eq:sAII-Type-I.2} and \eqref{eq:sB0-Type-I.2}: $(\ell,m)\mapsto (2\ell+1,2m+1)$, $a=1$ and $b=2$. Also, $\sigma=-(2\ell+2m+2)k$. Then \eqref{eq:AB0-Type-I.2} becomes
	\begin{align}\label{eq:AB0-Type-I.2-2-Eq2}
	H_M\big(q^{\sigma}\sB_0\sA_{I}\big) =-(-1)^{\kappa}H_M\big(q^{\sigma}\sB_0\sA_{I\!I}\big).
	\end{align}
	To make use of Theorem \ref{th:cancel-1}, we need to swap the choice of $(\kappa,u,A)$ and $(\lambda,v,B)$ in our initial setting. In other words, in Theorem \ref{th:cancel-1}, we set
	\begin{align*}
	\begin{array}{rclp{3cm}rcl}
	\kappa & \mapsto & (2m+1)\lambda && \lambda & \mapsto & (2\ell+1)\kappa\\
	u & \mapsto & 2(2m+1)k && v & \mapsto & (2\ell+1)k\\
	A & \mapsto & 2(2m+1)\mu && B & \mapsto & (2\ell+1)\mu\\
	A' & \mapsto & (2m+1)\mu && B' & \mapsto & \xi \mu\\
	M & \mapsto & \multicolumn{5}{l}{2\ell+4m+3}\\
	w & \mapsto & \multicolumn{5}{l}{-(2\ell+2m+2)k+\xi k}\\
	\end{array}
	\end{align*}
	Then
	\begin{align*}
	\hw = -(2\ell+2m+2)k +(2\ell+1-\xi) k.
	\end{align*}
	So,
	\begin{align*}
	\sH(q)&=q^{\sigma}\sB_0\sA_{I},\\
	\hat{\sH}(q)&=q^{\sigma}\sB_0\sA_{I\!I}.
	\end{align*}
	Now we compute that
	\begin{align*}
	\frac{2B'v}{d^* B} = \frac{2\xi k}{d^*}.
	\end{align*}
	If $d^* \nmid 2\xi k$, then $\xi k \not\equiv 0 \pmod{d_0}$, and thus,
	\begin{align*}
	w&\equiv \xi k \not\equiv 0 \pmod{d_0},\\
	\hw&\equiv -\xi k \not\equiv 0 \pmod{d_0},
	\end{align*}
	implying from Theorem \ref{th:UMH}~(i) that
	\begin{align*}
	H_M\big(q^{\sigma}\sB_0\sA_{I}\big) =-(-1)^{\kappa}H_M\big(q^{\sigma}\sB_0\sA_{I\!I}\big)=0.
	\end{align*}
	Below, we assume that $d^* \mid 2\xi k$. Then the first assumption in \eqref{eq:assump-d*} is satisfied. We may also assume that $w$ is a multiple of $d^*=d_0$ according to Theorem \ref{th:cancel-1}~(i). With the choice of $K=1$, we compute that
	\begin{align*}
	\frac{(A-2A')v^2-(B-2B')uv-2Buw\cdot K}{Av^2+Bu^2}&=1,\\
	\frac{B(A-2A')uv+A(B-2B')v^2+2ABvw\cdot K}{B(Av^2+Bu^2)}&=-1.
	\end{align*}
	So the remaining assumptions in \eqref{eq:assump-d*} are satisfied. Finally, we compute that
	\begin{align*}
	\epsilon=(2\ell+1)\kappa+(2m+1)\lambda.
	\end{align*}
	Thus, by Theorem \ref{th:cancel-1}~(ii),
	\begin{align*}
	H_M\big(\sH(q)\big) = (-1)^{\kappa+\lambda}H_M\big(\hat{\sH}(q)\big).
	\end{align*}
	It follows from $(\kappa,\lambda)\in\{(0,1),(1,1)\}$ that
	\begin{align*}
	H_M\big(q^{\sigma}\sB_0\sA_{I}\big)+(-1)^{\kappa}H_M\big(q^{\sigma}\sB_0\sA_{I\!I}\big)&=H_M\big(\sH(q)\big)+(-1)^{\kappa}H_M\big(\hat{\sH}(q)\big)\\
	&=0.
	\end{align*}
	Thus, \eqref{eq:AB0-Type-I.2-2-Eq2} is established.
	
	\item
	\textit{Examining \eqref{eq:AB-Type-I.2}}. Let us keep in mind that we have made the substitutions in \eqref{eq:sAI-Type-I.2}, \eqref{eq:sAII-Type-I.2}, \eqref{eq:sBI-Type-I.2} and \eqref{eq:sBII-Type-I.2}: $(\ell,m)\mapsto (2\ell+1,2m+1)$, $a=1$ and $b=2$. Also, $\sigma=-(2\ell+2m+2)k$. Then \eqref{eq:AB-Type-I.2} becomes
	\begin{align}\label{eq:AB-Type-I.2-2-Eq2}
	H_M\big(q^{\sigma}\sA_{I}\sB_{I}\big) =-(-1)^{\kappa}H_M\big(q^{\sigma}\sA_{I\!I}\sB_{I\!I}\big).
	\end{align}
	In Theorem \ref{th:cancel-2}, we further set
	\begin{align*}
	\begin{array}{rclp{4.5cm}rcl}
	A' & \mapsto & \xi \mu && B' & \mapsto & (2m+1+2\tau)\mu\\
	w & \mapsto & \multicolumn{5}{l}{-(2\ell+2m+2)k+\xi k+2\tau k}\\
	\end{array}
	\end{align*}
	Then
	\begin{align*}
	\chw = -(2\ell+2m+2)k +(2\ell+1-\xi) k-2\tau k.
	\end{align*}
	So,
	\begin{align*}
	\sH(q)&=q^{\sigma}\sA_{I}\sB_{I},\\
	\check{\sH}(q)&=q^{\sigma}\sA_{I\!I}\sB_{I\!I}.
	\end{align*}
	Now we compute that
	\begin{align*}
	\frac{2(A'Bu+AB'v)}{d^*AB} = \frac{2(2m+1+\xi +2\tau )k}{d^*}.
	\end{align*}
	Similarly, if $d^* \nmid 2(2m+1+\xi +2\tau )k$, then $(\xi+2\tau) k \not\equiv 0 \pmod{d_0}$, and thus,
	\begin{align*}
	w&\equiv (\xi+2\tau) k \not\equiv 0 \pmod{d_0},\\
	\chw&\equiv -(\xi+2\tau) k \not\equiv 0 \pmod{d_0},
	\end{align*}
	implying from Theorem \ref{th:UMH}~(i) that
	\begin{align*}
	H_M\big(q^{\sigma}\sA_{I}\sB_{I}\big) =-(-1)^{\kappa}H_M\big(q^{\sigma}\sA_{I\!I}\sB_{I\!I}\big)=0.
	\end{align*}
	Below, we assume that $d^* \mid 2(2m+1+\xi +2\tau )k$. Then the first assumption in \eqref{eq:assump-2-d*} is satisfied. We may also assume that $w$ is a multiple of $d^*=d_0$ according to Theorem \ref{th:cancel-2}~(i). With the choice of $K=1$, we compute that
	\begin{align*}
	\frac{B(A-2A')u^2+A(B-2B')uv+2ABuw\cdot K}{A(Av^2+Bu^2)}&=-1,\\
	\frac{B(A-2A')uv+A(B-2B')v^2+2ABvw\cdot K}{B(Av^2+Bu^2)}&=-1.
	\end{align*}
	So the remaining assumptions in \eqref{eq:assump-2-d*} are satisfied. Finally, we compute that
	\begin{align*}
	\varepsilon=(2\ell+1)\kappa+(2m+1)\lambda.
	\end{align*}
	Thus, by Theorem \ref{th:cancel-2}~(ii),
	\begin{align*}
	H_M\big(\sH(q)\big) = (-1)^{\kappa+\lambda}H_M\big(\check{\sH}(q)\big).
	\end{align*}
	It follows from $(\kappa,\lambda)\in\{(0,1),(1,0)\}$ that
	\begin{align*}
	H_M\big(q^{\sigma}\sA_{I}\sB_{I}\big)+(-1)^{\kappa}H_M\big(q^{\sigma}\sA_{I\!I}\sB_{I\!I}\big)&=H_M\big(\sH(q)\big)+(-1)^{\kappa}H_M\big(\check{\sH}(q)\big)\\
	&=0.
	\end{align*}
	Thus, \eqref{eq:AB-Type-I.2-2-Eq2} is established.
	
\end{itemize}

\subsection{Type I --- Theorem \ref{th:Type-I.3}}

In this section, we treat the coefficient-vanishing results in Theorem \ref{th:Type-I.3}. The basic idea is similar to that for Theorem \ref{th:Type-I.2}.

We still deduce from Corollary \ref{coro:power-pairing} with certain common factors extracted and powers of $(-1)$ modified that, for any $\ell\ge 1$,
\begin{align*}
	f\big((-1)^{\kappa}q^{ak},(-1)^{\kappa}q^{-ak+M\mu}\big)^{\ell}=\sum_{s=0}^{\ell-1}\sA_s \sF_{s},
\end{align*}
where each $\sF_{\star}$ is a series in $q^M$, and
\begin{align}\label{eq:sA0-Type-I.3}
	\sA_0 = f\big((-1)^{\kappa \ell}q^{ak\ell},(-1)^{\kappa \ell}q^{-ak\ell+M\mu \ell}\big),
\end{align}
and for $1\le s\le \ell-1$,
\begin{align*}
	\sA_s&=q^{aks} f\big((-1)^{\kappa \ell}q^{ak\ell+M\mu s},(-1)^{\kappa \ell}q^{-ak\ell+M\mu (\ell-s)}\big)\notag\\
	&\quad+(-1)^{\kappa \ell}q^{ak(\ell-s)} f\big((-1)^{\kappa \ell}q^{ak\ell+M\mu (\ell-s)},(-1)^{\kappa \ell}q^{-ak\ell+M\mu s}\big)\notag\\
	&=:\sA_{s,I}+(-1)^{\kappa \ell}\sA_{s,I\!I}.
\end{align*}
More generally, we consider
\begin{align}
	\sA_{I}&:=q^{a\xi k} f\big((-1)^{\kappa \ell}q^{ak\ell+M\mu \xi},(-1)^{\kappa \ell}q^{-ak\ell+M\mu (\ell-\xi)}\big),\label{eq:sAI-Type-I.3}\\
	\sA_{I\!I}&:=q^{a(\ell-\xi) k} f\big((-1)^{\kappa \ell}q^{ak\ell+M\mu (\ell-\xi)},(-1)^{\kappa \ell}q^{-ak\ell+M\mu \xi}\big),\label{eq:sAII-Type-I.3}
\end{align}
for generic $\xi\in\mathbb{Z}$.

Now in this occasion, by Corollary \ref{coro:power-pairing}, we write for any $m\ge 1$,
\begin{align*}
	f\big((-1)^{\lambda}q^{bk},(-1)^{\lambda}q^{-bk+2M\mu}\big)^{m}=\sum_{t=0}^{m-1}\sB_t \sG_{t},
\end{align*}
where each $\sG_{\star}$ is a series in $q^M$, and
\begin{align}\label{eq:sB0-Type-I.3}
	\sB_0 = f\big((-1)^{\lambda m}q^{bkm},(-1)^{\lambda m}q^{-bkm+2M\mu m}\big),
\end{align}
and for $1\le t\le m-1$,
\begin{align*}
	\sB_t&=q^{bkt} f\big((-1)^{\lambda m}q^{bkm+2M\mu t},(-1)^{\lambda m}q^{-bkm+2M\mu (m-t)}\big)\notag\\
	&\quad+(-1)^{\lambda m}q^{bk(m-t)} f\big((-1)^{\lambda m}q^{bkm+2M\mu (m-t)},(-1)^{\lambda m}q^{-bkm+2M\mu t}\big)\notag\\
	&=:\sB_{t,I}+(-1)^{\lambda m}\sB_{t,I\!I}.
\end{align*}
More generally, we consider
\begin{align}
	\sB_{I}&:=q^{b\tau k} f\big((-1)^{\lambda m}q^{bkm+2M\mu \tau},(-1)^{\lambda m}q^{-bkm+2M\mu (m-\tau)}\big),\label{eq:sBI-Type-I.3}\\
	\sB_{I\!I}&:=q^{b(m-\tau) k} f\big((-1)^{\lambda m}q^{bkm+2M\mu (m-\tau)},(-1)^{\lambda m}q^{-bkm+2M\mu \tau}\big),\label{eq:sBII-Type-I.3}
\end{align}
for generic $\tau\in\mathbb{Z}$.

With all parameters chosen in each case below, our generic target is to show the following:
\begin{align}\label{eq:A0B0-Type-I.3}
	\mathbf{H}_M\big(q^{\sigma}\sA_0\sB_{0}\big) =0,
\end{align}
\begin{align}\label{eq:A0B-Type-I.3}
	\mathbf{H}_M\big(q^{\sigma}\sA_0\sB_{I}\big) =-(-1)^{\lambda m}\mathbf{H}_M\big(q^{\sigma}\sA_0\sB_{I\!I}\big),
\end{align}
\begin{align}\label{eq:AB0-Type-I.3}
	\mathbf{H}_M\big(q^{\sigma}\sB_{0}\sA_{I}\big) =-(-1)^{\kappa \ell}\mathbf{H}_M\big(q^{\sigma}\sB_{0}\sA_{I\!I}\big),
\end{align}
and
\begin{align}\label{eq:AB-Type-I.3}
	\mathbf{H}_M\big(q^{\sigma}\sA_{I}\sB_{I}\big) =-(-1)^{\kappa \ell+\lambda m}\mathbf{H}_M\big(q^{\sigma}\sA_{I\!I}\sB_{I\!I}\big).
\end{align}
Upon setting $\tau\mapsto m-\tau$ in \eqref{eq:AB-Type-I.3}, we automatically have
\begin{align}\label{eq:AB-2-Type-I.3}
	\mathbf{H}_M\big(q^{\sigma}\sA_{I}\sB_{I\!I}\big) =-(-1)^{\kappa \ell+\lambda m}\mathbf{H}_M\big(q^{\sigma}\sA_{I\!I}\sB_{I}\big).
\end{align}
Once the above relations are established, it is safe to conclude by our pairing that
\begin{align}
	\mathbf{H}_{M}\Big(q^{\sigma}f\big((-1)^{\kappa}q^{ak},(-1)^{\kappa}q^{-ak+ M\mu}\big)^{\ell}f\big((-1)^{\lambda}q^{bk},(-1)^{\lambda}q^{-bk+2M\mu}\big)^{m}\Big)=0.
\end{align}

\subsubsection{Equation \eqref{eq:I.3-1}}

Recall that $(\ell,m)\mapsto (2\ell+1,2m+1)$, $a=1$ and $b=1$. Also, $M=4\ell+2m+3$ and $\sigma=-(\ell+m+1)k$. Further, $(\kappa,\lambda)\in\{(0,1),(1,0)\}$ and $k$ is such that $\gcd(k,M)=1$.

In Corollary \ref{coro:UMH=0-J} and Theorems \ref{th:cancel-1} and \ref{th:cancel-2}, we set
\begin{align*}
	\begin{array}{rclp{3cm}rcl}
		\kappa & \mapsto & (2\ell+1)\kappa && \lambda & \mapsto & (2m+1)\lambda\\
		u & \mapsto & (2\ell+1)k && v & \mapsto & (2m+1)k\\
		A & \mapsto & (2\ell+1)\mu && B & \mapsto & 2(2m+1)\mu\\
		M & \mapsto & \multicolumn{5}{l}{4\ell+2m+3}\\
	\end{array}
\end{align*}
Let
\begin{align*}
	d_0=\gcd\big(2\ell+1,2m+1\big).
\end{align*}
Then
\begin{align*}
	d=\gcd(u,v)=d_0 k.
\end{align*}
Noticing that $M=2(2\ell+1)+(2m+1)$, and that $k$ is coprime to $M$, we have
\begin{align*}
	d^*&=\gcd(u,v,M)=d_0,\\
	d_u&=\gcd(u,M)=d_0,\\
	d_v&=\gcd(v,M)=d_0.
\end{align*}
We compute that
\begin{align*}
	\frac{Av\cdot dM}{d_u(Av^2+Bu^2)}&=1,\\
	\frac{Bu\cdot dM}{d_v(Av^2+Bu^2)}&=2.
\end{align*}
Thus, the second and third assumptions in \eqref{eq:div-assump-coro}, \eqref{eq:assump-d*} and \eqref{eq:assump-2-d*} are satisfied.

\begin{itemize}[leftmargin=*,align=left,itemsep=5pt]
	\renewcommand{\labelitemi}{\scriptsize$\blacktriangleright$}
	
	\item 
	\textit{Examining \eqref{eq:A0B0-Type-I.3}}. Let us keep in mind that we have made the substitutions in \eqref{eq:sA0-Type-I.3} and \eqref{eq:sB0-Type-I.3}: $(\ell,m)\mapsto (2\ell+1,2m+1)$, $a=1$ and $b=1$. Also, $\sigma=-(\ell+m+1)k$. We want to show \eqref{eq:A0B0-Type-I.3},
	\begin{align}\label{eq:A0B0-Type-I.3-1-Eq1}
		\mathbf{H}_M\big(q^{\sigma}\sA_0\sB_{0}\big) =0.
	\end{align}
	In Corollary \ref{coro:UMH=0-J}, we further set
	\begin{align*}
		\begin{array}{rclp{3cm}rcl}
			A' & \mapsto & 0 && B' & \mapsto & 0\\
			w & \mapsto & \multicolumn{5}{l}{-(\ell+m+1)k}\\
		\end{array}
	\end{align*}
	So,
	\begin{align*}
		\sH(q)=q^{\sigma}\sA_0\sB_0.
	\end{align*}
	We may further assume that $w$ is a multiple of $d^*$. Otherwise, we know from Theorem \ref{th:UMH}~(i) that
	\begin{align*}
		\mathbf{H}_M\big(\sH(q)\big) = 0,
	\end{align*}
	which gives
	\begin{align*}
		\mathbf{H}_M\big(q^{\sigma}\sA_0\sB_0\big) =0.
	\end{align*} 
	We further compute that
	\begin{align*}
		\frac{-v\kappa+u\lambda}{d}=-\frac{(2\ell+1)(2m+1)}{d_0}(\kappa-\lambda).
	\end{align*}
	The fact that $d_0=\gcd(2\ell+1,2m+1)$ then implies that $(2\ell+1)(2m+1)/d_0$ is an odd integer. Thus, \eqref{eq:div-assump-coro} is satisfied since $(\kappa,\lambda)\in\{(0,1),(1,0)\}$. Finally, we choose $J=0$ in \eqref{eq:div-assump-coro-J}. Then
	\begin{equation*}
		\scalebox{0.85}{%
			$
			\begin{aligned}
				\dfrac{2dMAv\cdot J-2dAvw-dAuv+dBu^2+2dA'uv-2dB'u^2-Auv^2-Bu^3}{2d(Av^2+Bu^2)}&=\frac{d_0-(2\ell+1)}{2d_0},\\
				\dfrac{2dMBu\cdot J-2dBuw+dAv^2-dBuv-2dA'v^2+2dB'uv+Av^3+Bu^2v}{2d(Av^2+Bu^2)}&=\frac{d_0+(2m+1)}{2d_0}.
			\end{aligned}
			$}
	\end{equation*}
	Recall that $2\ell+1\equiv 2m+1\equiv d_0 \pmod{2d_0}$ since $d_0=\gcd(2\ell+1,2m+1)$ is odd. So \eqref{eq:div-assump-coro-J} is also satisfied. We conclude by Corollary \ref{coro:UMH=0-J} that
	\begin{align*}
		\mathbf{H}_M\big(\sH(q)\big) = 0.
	\end{align*}
	Thus, \eqref{eq:A0B0-Type-I.3-1-Eq1} is established.
	
	\item
	\textit{Examining \eqref{eq:A0B-Type-I.3}}. Let us keep in mind that we have made the substitutions in \eqref{eq:sA0-Type-I.3}, \eqref{eq:sBI-Type-I.3} and \eqref{eq:sBII-Type-I.3}: $(\ell,m)\mapsto (2\ell+1,2m+1)$, $a=1$ and $b=1$. Also, $\sigma=-(\ell+m+1)k$. Then \eqref{eq:A0B-Type-I.3} becomes
	\begin{align}\label{eq:A0B-Type-I.3-1-Eq2}
		\mathbf{H}_M\big(q^{\sigma}\sA_0\sB_{I}\big) =-(-1)^{\lambda}\mathbf{H}_M\big(q^{\sigma}\sA_0\sB_{I\!I}\big).
	\end{align}
	In Theorem \ref{th:cancel-1}, we further set
	\begin{align*}
		\begin{array}{rclp{3.5cm}rcl}
			A' & \mapsto & 0 && B' & \mapsto & 2\tau\mu\\
			w & \mapsto & \multicolumn{5}{l}{-(\ell+m+1)k+\tau k}\\
		\end{array}
	\end{align*}
	Then
	\begin{align*}
		\hw = -(\ell+m+1)k +(2m+1-\tau) k.
	\end{align*}
	So,
	\begin{align*}
		\sH(q)&=q^{\sigma}\sA_0\sB_{I},\\
		\hat{\sH}(q)&=q^{\sigma}\sA_0\sB_{I\!I}.
	\end{align*}
	Now we compute that
	\begin{align*}
		\frac{2B'v}{d^* B} = \frac{2\tau k}{d^*}.
	\end{align*}
	If $d^* \nmid 2\tau k$, then $\tau k \not\equiv 0 \pmod{d_0}$, and thus,
	\begin{align*}
		w&=-\tfrac{1}{2} (2\ell+1)k-\tfrac{1}{2}(2m+1)k+\tau k\equiv \tau k \not\equiv 0 \pmod{d_0},\\
		\hw&=-\tfrac{1}{2} (2\ell+1)k+\tfrac{1}{2}(2m+1)k-\tau k\equiv -\tau k \not\equiv 0 \pmod{d_0}.
	\end{align*}
	So $w$ and $\hw$ are nonmultiples of $d_0=d^*$. In this case we know from Theorem \ref{th:UMH}~(i) that
	\begin{align*}
		\mathbf{H}_M\big(\sH(q)\big) = \mathbf{H}_M\big(\hat{\sH}(q)\big) = 0,
	\end{align*}
	which gives
	\begin{align*}
		\mathbf{H}_M\big(q^{\sigma}\sA_0\sB_{I}\big) =-(-1)^{\lambda}\mathbf{H}_M\big(q^{\sigma}\sA_0\sB_{I\!I}\big)=0.
	\end{align*}
	Below, we assume that $d^* \mid 2\tau k$. Then the first assumption in \eqref{eq:assump-d*} is satisfied. We may also assume that $w$ is a multiple of $d^*=d_0$ according to Theorem \ref{th:cancel-1}~(i). It is easily seen that $d^*k$ is a divisor of $\gcd(u,v,w)$. Thus, we solve the following stronger system
	\begin{align*}
		\left\{
		\begin{aligned}
			K&\equiv 1 \pmod{M/d^*},\\
			K&\equiv 0 \pmod{d/(d^*k)},
		\end{aligned}
		\right.
	\end{align*}
	and choose $K=1$. Finally, we compute that
	\begin{align*}
		\frac{(A-2A')v^2-(B-2B')uv-2Buw\cdot K}{Av^2+Bu^2}&=1,\\
		\frac{B(A-2A')uv+A(B-2B')v^2+2ABvw\cdot K}{B(Av^2+Bu^2)}&=0.
	\end{align*}
	So the remaining assumptions in \eqref{eq:assump-d*} are satisfied. Finally, we compute that
	\begin{align*}
		\epsilon=(2\ell+1)\kappa.
	\end{align*}
	Thus, by Theorem \ref{th:cancel-1}~(ii),
	\begin{align*}
		\mathbf{H}_M\big(\sH(q)\big) = (-1)^{\kappa}\mathbf{H}_M\big(\hat{\sH}(q)\big).
	\end{align*}
	It follows from $(\kappa,\lambda)\in\{(0,1),(1,0)\}$ that
	\begin{align*}
		\mathbf{H}_M\big(q^{\sigma}\sA_0\sB_{I}\big)+(-1)^{\lambda}\mathbf{H}_M\big(q^{\sigma}\sA_0\sB_{I\!I}\big)&=\mathbf{H}_M\big(\sH(q)\big)+(-1)^{\lambda}\mathbf{H}_M\big(\hat{\sH}(q)\big)\\
		&=0.
	\end{align*}
	Thus, \eqref{eq:A0B-Type-I.3-1-Eq2} is established.
	
	\item 
	\textit{Examining \eqref{eq:AB0-Type-I.3}}.
	Let us keep in mind that we have made the substitutions in \eqref{eq:sAI-Type-I.3}, \eqref{eq:sAII-Type-I.3} and \eqref{eq:sB0-Type-I.3}: $(\ell,m)\mapsto (2\ell+1,2m+1)$, $a=1$ and $b=1$. Also, $\sigma=-(\ell+m+1)k$. Then \eqref{eq:AB0-Type-I.3} becomes
	\begin{align}\label{eq:AB0-Type-I.3-1-Eq2}
		\mathbf{H}_M\big(q^{\sigma}\sB_0\sA_{I}\big) =-(-1)^{\kappa}\mathbf{H}_M\big(q^{\sigma}\sB_0\sA_{I\!I}\big).
	\end{align}
	To make use of Theorem \ref{th:cancel-1}, we need to swap the choice of $(\kappa,u,A)$ and $(\lambda,v,B)$ in our initial setting. In other words, in Theorem \ref{th:cancel-1}, we set
	\begin{align*}
		\begin{array}{rclp{3cm}rcl}
			\kappa & \mapsto & (2m+1)\lambda && \lambda & \mapsto & (2\ell+1)\kappa\\
			u & \mapsto & (2m+1)k && v & \mapsto & (2\ell+1)k\\
			A & \mapsto & 2(2m+1)\mu && B & \mapsto & (2\ell+1)\mu\\
			A' & \mapsto & 0 && B' & \mapsto & \xi \mu\\
			M & \mapsto & \multicolumn{5}{l}{4\ell+2m+3}\\
			w & \mapsto & \multicolumn{5}{l}{-(\ell+m+1)k+\xi k}\\
		\end{array}
	\end{align*}
	Then
	\begin{align*}
		\hw = -(\ell+m+1)k +(2\ell+1-\xi) k.
	\end{align*}
	So,
	\begin{align*}
		\sH(q)&=q^{\sigma}\sB_0\sA_{I},\\
		\hat{\sH}(q)&=q^{\sigma}\sB_0\sA_{I\!I}.
	\end{align*}
	Now we compute that
	\begin{align*}
		\frac{2B'v}{d^* B} = \frac{2\xi k}{d^*}.
	\end{align*}
	If $d^* \nmid 2\xi k$, then $\xi k \not\equiv 0 \pmod{d_0}$, and thus,
	\begin{align*}
		w&\equiv \xi k \not\equiv 0 \pmod{d_0},\\
		\hw&\equiv -\xi k \not\equiv 0 \pmod{d_0},
	\end{align*}
	implying from Theorem \ref{th:UMH}~(i) that
	\begin{align*}
		\mathbf{H}_M\big(q^{\sigma}\sB_0\sA_{I}\big) =-(-1)^{\kappa}\mathbf{H}_M\big(q^{\sigma}\sB_0\sA_{I\!I}\big)=0.
	\end{align*}
	Below, we assume that $d^* \mid 2\xi k$. Then the first assumption in \eqref{eq:assump-d*} is satisfied. We may also assume that $w$ is a multiple of $d^*=d_0$ according to Theorem \ref{th:cancel-1}~(i). With the choice of $K=1$, we compute that
	\begin{align*}
		\frac{(A-2A')v^2-(B-2B')uv-2Buw\cdot K}{Av^2+Bu^2}&=1,\\
		\frac{B(A-2A')uv+A(B-2B')v^2+2ABvw\cdot K}{B(Av^2+Bu^2)}&=0.
	\end{align*}
	So the remaining assumptions in \eqref{eq:assump-d*} are satisfied. Finally, we compute that
	\begin{align*}
		\epsilon=(2m+1)\lambda.
	\end{align*}
	Thus, by Theorem \ref{th:cancel-1}~(ii),
	\begin{align*}
		\mathbf{H}_M\big(\sH(q)\big) = (-1)^{\lambda}\mathbf{H}_M\big(\hat{\sH}(q)\big).
	\end{align*}
	It follows from $(\kappa,\lambda)\in\{(0,1),(1,0)\}$ that
	\begin{align*}
		\mathbf{H}_M\big(q^{\sigma}\sB_0\sA_{I}\big)+(-1)^{\kappa}\mathbf{H}_M\big(q^{\sigma}\sB_0\sA_{I\!I}\big)&=\mathbf{H}_M\big(\sH(q)\big)+(-1)^{\kappa}\mathbf{H}_M\big(\hat{\sH}(q)\big)\\
		&=0.
	\end{align*}
	Thus, \eqref{eq:AB0-Type-I.3-1-Eq2} is established.
	
	\item
	\textit{Examining \eqref{eq:AB-Type-I.3}}. Let us keep in mind that we have made the substitutions in \eqref{eq:sAI-Type-I.3}, \eqref{eq:sAII-Type-I.3}, \eqref{eq:sBI-Type-I.3} and \eqref{eq:sBII-Type-I.3}: $(\ell,m)\mapsto (2\ell+1,2m+1)$, $a=1$ and $b=1$. Also, $\sigma=-(\ell+m+1)k$. Then \eqref{eq:AB-Type-I.3} becomes
	\begin{align}\label{eq:AB-Type-I.3-1-Eq2}
		\mathbf{H}_M\big(q^{\sigma}\sA_{I}\sB_{I}\big) =-(-1)^{\kappa+\lambda}\mathbf{H}_M\big(q^{\sigma}\sA_{I\!I}\sB_{I\!I}\big).
	\end{align}
	In Theorem \ref{th:cancel-2}, we further set
	\begin{align*}
		\begin{array}{rclp{4cm}rcl}
			A' & \mapsto & \xi \mu && B' & \mapsto & 2\tau\mu\\
			w & \mapsto & \multicolumn{5}{l}{-(\ell+m+1)k+\xi k+\tau k}\\
		\end{array}
	\end{align*}
	Then
	\begin{align*}
		\chw = -(\ell+m+1)k +(2\ell+1-\xi) k+(2m+1-\tau) k.
	\end{align*}
	So,
	\begin{align*}
		\sH(q)&=q^{\sigma}\sA_{I}\sB_{I},\\
		\check{\sH}(q)&=q^{\sigma}\sA_{I\!I}\sB_{I\!I}.
	\end{align*}
	Now we compute that
	\begin{align*}
		\frac{2(A'Bu+AB'v)}{d^*AB} = \frac{2(\xi +\tau )k}{d^*}.
	\end{align*}
	Similarly, if $d^* \nmid 2(\xi +\tau )k$, then $(\xi+\tau) k \not\equiv 0 \pmod{d_0}$, and thus,
	\begin{align*}
		w&\equiv (\xi+\tau) k \not\equiv 0 \pmod{d_0},\\
		\chw&\equiv -(\xi+\tau) k \not\equiv 0 \pmod{d_0},
	\end{align*}
	implying from Theorem \ref{th:UMH}~(i) that
	\begin{align*}
		\mathbf{H}_M\big(q^{\sigma}\sA_{I}\sB_{I}\big) =-(-1)^{\kappa+\lambda}\mathbf{H}_M\big(q^{\sigma}\sA_{I\!I}\sB_{I\!I}\big)=0.
	\end{align*}
	Below, we assume that $d^* \mid 2(\xi +\tau )k$. Then the first assumption in \eqref{eq:assump-2-d*} is satisfied. We may also assume that $w$ is a multiple of $d^*=d_0$ according to Theorem \ref{th:cancel-2}~(i). With the choice of $K=1$, we compute that
	\begin{align*}
		\frac{B(A-2A')u^2+A(B-2B')uv+2ABuw\cdot K}{A(Av^2+Bu^2)}&=0,\\
		\frac{B(A-2A')uv+A(B-2B')v^2+2ABvw\cdot K}{B(Av^2+Bu^2)}&=0.
	\end{align*}
	So the remaining assumptions in \eqref{eq:assump-2-d*} are satisfied. Finally, we compute that
	\begin{align*}
		\varepsilon=0.
	\end{align*}
	Thus, by Theorem \ref{th:cancel-2}~(ii),
	\begin{align*}
		\mathbf{H}_M\big(\sH(q)\big) = \mathbf{H}_M\big(\check{\sH}(q)\big).
	\end{align*}
	It follows from $(\kappa,\lambda)\in\{(0,1),(1,0)\}$ that
	\begin{align*}
		\mathbf{H}_M\big(q^{\sigma}\sA_{I}\sB_{I}\big)+(-1)^{\kappa+\lambda}\mathbf{H}_M\big(q^{\sigma}\sA_{I\!I}\sB_{I\!I}\big)&=\mathbf{H}_M\big(\sH(q)\big)+(-1)^{\kappa+\lambda}\mathbf{H}_M\big(\check{\sH}(q)\big)\\
		&=0.
	\end{align*}
	Thus, \eqref{eq:AB-Type-I.3-1-Eq2} is established.
	
\end{itemize}

\subsubsection{Equation \eqref{eq:I.3-2}}

Recall that $(\ell,m)\mapsto (2\ell+1,2m+1)$, $a=1$ and $b=4$. Also, $M=2\ell+16m+9$ and $\sigma=-(2\ell+12m+7)k$. Further, $(\kappa,\lambda)\in\{(0,1),(1,1)\}$ and $k$ is such that $\gcd(k,M)=1$.

In Corollary \ref{coro:UMH=0-J} and Theorems \ref{th:cancel-1} and \ref{th:cancel-2}, we set
\begin{align*}
	\begin{array}{rclp{3cm}rcl}
		\kappa & \mapsto & (2\ell+1)\kappa && \lambda & \mapsto & (2m+1)\lambda\\
		u & \mapsto & (2\ell+1)k && v & \mapsto & 4(2m+1)k\\
		A & \mapsto & (2\ell+1)\mu && B & \mapsto & 2(2m+1)\mu\\
		M & \mapsto & \multicolumn{5}{l}{2\ell+16m+9}\\
	\end{array}
\end{align*}
Let
\begin{align*}
	d_0=\gcd\big(2\ell+1,4(2m+1)\big)=\gcd\big(2\ell+1,2m+1\big).
\end{align*}
Then
\begin{align*}
	d=\gcd(u,v)=d_0 k.
\end{align*}
Noticing that $M=(2\ell+1)+8(2m+1)$, and that $k$ is coprime to $M$, we have
\begin{align*}
	d^*&=\gcd(u,v,M)=d_0,\\
	d_u&=\gcd(u,M)=d_0,\\
	d_v&=\gcd(v,M)=d_0.
\end{align*}
We compute that
\begin{align*}
	\frac{Av\cdot dM}{d_u(Av^2+Bu^2)}&=2,\\
	\frac{Bu\cdot dM}{d_v(Av^2+Bu^2)}&=1.
\end{align*}
Thus, the second and third assumptions in \eqref{eq:div-assump-coro}, \eqref{eq:assump-d*} and \eqref{eq:assump-2-d*} are satisfied.

\begin{itemize}[leftmargin=*,align=left,itemsep=5pt]
	\renewcommand{\labelitemi}{\scriptsize$\blacktriangleright$}
	
	\item 
	\textit{Examining \eqref{eq:A0B0-Type-I.3}}. Let us keep in mind that we have made the substitutions in \eqref{eq:sA0-Type-I.3} and \eqref{eq:sB0-Type-I.3}: $(\ell,m)\mapsto (2\ell+1,2m+1)$, $a=1$ and $b=4$. Also, $\sigma=-(2\ell+12m+7)k$. We want to show \eqref{eq:A0B0-Type-I.3},
	\begin{align}\label{eq:A0B0-Type-I.3-2-Eq1}
		H_M\big(q^{\sigma}\sA_0\sB_{0}\big) =0.
	\end{align}
	In Corollary \ref{coro:UMH=0-J}, we further set
	\begin{align*}
		\begin{array}{rclp{3cm}rcl}
			A' & \mapsto & 0 && B' & \mapsto & 0\\
			w & \mapsto & \multicolumn{5}{l}{-(2\ell+12m+7)k}\\
		\end{array}
	\end{align*}
	So,
	\begin{align*}
		\sH(q)=q^{\sigma}\sA_0\sB_0.
	\end{align*}
	We may further assume that $w$ is a multiple of $d^*$. Otherwise, we know from Theorem \ref{th:UMH}~(i) that
	\begin{align*}
		H_M\big(\sH(q)\big) = 0,
	\end{align*}
	which gives
	\begin{align*}
		H_M\big(q^{\sigma}\sA_0\sB_0\big) =0.
	\end{align*} 
	We further compute that
	\begin{align*}
		\frac{-v\kappa+u\lambda}{d}=-\frac{(2\ell+1)(2m+1)}{d_0}(4\kappa-\lambda).
	\end{align*}
	The fact that $d_0=\gcd(2\ell+1,2m+1)$ then implies that $(2\ell+1)(2m+1)/d_0$ is an odd integer. Thus, \eqref{eq:div-assump-coro} is satisfied since $(\kappa,\lambda)\in\{(0,1),(1,1)\}$. Finally, we choose $J=0$ in \eqref{eq:div-assump-coro-J}. Then
	\begin{equation*}
		\scalebox{0.85}{%
			$
			\begin{aligned}
			\dfrac{2dMAv\cdot J-2dAvw-dAuv+dBu^2+2dA'uv-2dB'u^2-Auv^2-Bu^3}{2d(Av^2+Bu^2)}&=\frac{3d_0-(2\ell+1)}{2d_0},\\
			\dfrac{2dMBu\cdot J-2dBuw+dAv^2-dBuv-2dA'v^2+2dB'uv+Av^3+Bu^2v}{2d(Av^2+Bu^2)}&=\frac{d_0+2(2m+1)}{d_0}.
			\end{aligned}
			$}
	\end{equation*}
	Recall that $2\ell+1\equiv 2m+1\equiv d_0 \pmod{2d_0}$ since $d_0=\gcd(2\ell+1,2m+1)$ is odd. So \eqref{eq:div-assump-coro-J} is also satisfied. We conclude by Corollary \ref{coro:UMH=0-J} that
	\begin{align*}
		H_M\big(\sH(q)\big) = 0.
	\end{align*}
	Thus, \eqref{eq:A0B0-Type-I.3-2-Eq1} is established.
	
	\item
	\textit{Examining \eqref{eq:A0B-Type-I.3}}. Let us keep in mind that we have made the substitutions in \eqref{eq:sA0-Type-I.3}, \eqref{eq:sBI-Type-I.3} and \eqref{eq:sBII-Type-I.3}: $(\ell,m)\mapsto (2\ell+1,2m+1)$, $a=1$ and $b=4$. Also, $\sigma=-(2\ell+12m+7)k$. Then \eqref{eq:A0B-Type-I.3} becomes
	\begin{align}\label{eq:A0B-Type-I.3-2-Eq2}
		H_M\big(q^{\sigma}\sA_0\sB_{I}\big) =-(-1)^{\lambda}H_M\big(q^{\sigma}\sA_0\sB_{I\!I}\big).
	\end{align}
	In Theorem \ref{th:cancel-1}, we further set
	\begin{align*}
		\begin{array}{rclp{4cm}rcl}
			A' & \mapsto & 0 && B' & \mapsto & 2\tau\mu\\
			w & \mapsto & \multicolumn{5}{l}{-(2\ell+12m+7)k+4\tau k}\\
		\end{array}
	\end{align*}
	Then
	\begin{align*}
		\hw = -(2\ell+12m+7)k +4(2m+1-\tau) k.
	\end{align*}
	So,
	\begin{align*}
		\sH(q)&=q^{\sigma}\sA_0\sB_{I},\\
		\hat{\sH}(q)&=q^{\sigma}\sA_0\sB_{I\!I}.
	\end{align*}
	Now we compute that
	\begin{align*}
		\frac{2B'v}{d^* B} = \frac{8\tau k}{d^*}.
	\end{align*}
	If $d^* \nmid 8\tau k$, then $4\tau k \not\equiv 0 \pmod{d_0}$, and thus,
	\begin{align*}
		w&=- (2\ell+1)k-6(2m+1)k+4\tau k\equiv 4\tau k \not\equiv 0 \pmod{d_0},\\
		\hw&=- (2\ell+1)k-2(2m+1)k-4\tau k\equiv -4\tau k \not\equiv 0 \pmod{d_0}.
	\end{align*}
	So $w$ and $\hw$ are nonmultiples of $d_0=d^*$. In this case we know from Theorem \ref{th:UMH}~(i) that
	\begin{align*}
		H_M\big(\sH(q)\big) = H_M\big(\hat{\sH}(q)\big) = 0,
	\end{align*}
	which gives
	\begin{align*}
		H_M\big(q^{\sigma}\sA_0\sB_{I}\big) =-(-1)^{\lambda}H_M\big(q^{\sigma}\sA_0\sB_{I\!I}\big)=0.
	\end{align*}
	Below, we assume that $d^* \mid 8\tau k$. Then the first assumption in \eqref{eq:assump-d*} is satisfied. We may also assume that $w$ is a multiple of $d^*=d_0$ according to Theorem \ref{th:cancel-1}~(i). It is easily seen that $d^*k$ is a divisor of $\gcd(u,v,w)$. Thus, we solve the following stronger system
	\begin{align*}
		\left\{
		\begin{aligned}
			K&\equiv 1 \pmod{M/d^*},\\
			K&\equiv 0 \pmod{d/(d^*k)},
		\end{aligned}
		\right.
	\end{align*}
	and choose $K=1$. Finally, we compute that
	\begin{align*}
		\frac{(A-2A')v^2-(B-2B')uv-2Buw\cdot K}{Av^2+Bu^2}&=2,\\
		\frac{B(A-2A')uv+A(B-2B')v^2+2ABvw\cdot K}{B(Av^2+Bu^2)}&=-2.
	\end{align*}
	So the remaining assumptions in \eqref{eq:assump-d*} are satisfied. Finally, we compute that
	\begin{align*}
		\epsilon=(4\ell+2)\kappa+(4m+2)\lambda.
	\end{align*}
	Thus, by Theorem \ref{th:cancel-1}~(ii),
	\begin{align*}
		H_M\big(\sH(q)\big) = H_M\big(\hat{\sH}(q)\big).
	\end{align*}
	It follows from $(\kappa,\lambda)\in\{(0,1),(1,1)\}$ that
	\begin{align*}
		H_M\big(q^{\sigma}\sA_0\sB_{I}\big)+(-1)^{\lambda}H_M\big(q^{\sigma}\sA_0\sB_{I\!I}\big)&=H_M\big(\sH(q)\big)+(-1)^{\lambda}H_M\big(\hat{\sH}(q)\big)\\
		&=0.
	\end{align*}
	Thus, \eqref{eq:A0B-Type-I.3-2-Eq2} is established.
	
	\item 
	\textit{Examining \eqref{eq:AB0-Type-I.3}}.
	Let us keep in mind that we have made the substitutions in \eqref{eq:sAI-Type-I.3}, \eqref{eq:sAII-Type-I.3} and \eqref{eq:sB0-Type-I.3}: $(\ell,m)\mapsto (2\ell+1,2m+1)$, $a=1$ and $b=4$. Also, $\sigma=-(2\ell+12m+7)k$. Then \eqref{eq:AB0-Type-I.3} becomes
	\begin{align}\label{eq:AB0-Type-I.3-2-Eq2}
		H_M\big(q^{\sigma}\sB_0\sA_{I}\big) =-(-1)^{\kappa}H_M\big(q^{\sigma}\sB_0\sA_{I\!I}\big).
	\end{align}
	To make use of Theorem \ref{th:cancel-1}, we need to swap the choice of $(\kappa,u,A)$ and $(\lambda,v,B)$ in our initial setting. In other words, in Theorem \ref{th:cancel-1}, we set
	\begin{align*}
		\begin{array}{rclp{3cm}rcl}
			\kappa & \mapsto & (2m+1)\lambda && \lambda & \mapsto & (2\ell+1)\kappa\\
			u & \mapsto & 4(2m+1)k && v & \mapsto & (2\ell+1)k\\
			A & \mapsto & 2(2m+1)\mu && B & \mapsto & (2\ell+1)\mu\\
			A' & \mapsto & 0 && B' & \mapsto & \xi \mu\\
			M & \mapsto & \multicolumn{5}{l}{2\ell+16m+9}\\
			w & \mapsto & \multicolumn{5}{l}{-(2\ell+12m+7)k+\xi k}\\
		\end{array}
	\end{align*}
	Then
	\begin{align*}
		\hw = -(2\ell+12m+7)k +(2\ell+1-\xi) k.
	\end{align*}
	So,
	\begin{align*}
		\sH(q)&=q^{\sigma}\sB_0\sA_{I},\\
		\hat{\sH}(q)&=q^{\sigma}\sB_0\sA_{I\!I}.
	\end{align*}
	Now we compute that
	\begin{align*}
		\frac{2B'v}{d^* B} = \frac{2\xi k}{d^*}.
	\end{align*}
	If $d^* \nmid 2\xi k$, then $\xi k \not\equiv 0 \pmod{d_0}$, and thus,
	\begin{align*}
		w&\equiv \xi k \not\equiv 0 \pmod{d_0},\\
		\hw&\equiv -\xi k \not\equiv 0 \pmod{d_0},
	\end{align*}
	implying from Theorem \ref{th:UMH}~(i) that
	\begin{align*}
		H_M\big(q^{\sigma}\sB_0\sA_{I}\big) =-(-1)^{\kappa}H_M\big(q^{\sigma}\sB_0\sA_{I\!I}\big)=0.
	\end{align*}
	Below, we assume that $d^* \mid 2\xi k$. Then the first assumption in \eqref{eq:assump-d*} is satisfied. We may also assume that $w$ is a multiple of $d^*=d_0$ according to Theorem \ref{th:cancel-1}~(i). With the choice of $K=1$, we compute that
	\begin{align*}
		\frac{(A-2A')v^2-(B-2B')uv-2Buw\cdot K}{Av^2+Bu^2}&=3,\\
		\frac{B(A-2A')uv+A(B-2B')v^2+2ABvw\cdot K}{B(Av^2+Bu^2)}&=-1.
	\end{align*}
	So the remaining assumptions in \eqref{eq:assump-d*} are satisfied. Finally, we compute that
	\begin{align*}
		\epsilon=(2\ell+1)\kappa+(6m+3)\lambda.
	\end{align*}
	Thus, by Theorem \ref{th:cancel-1}~(ii),
	\begin{align*}
		H_M\big(\sH(q)\big) = (-1)^{\kappa+\lambda}H_M\big(\hat{\sH}(q)\big).
	\end{align*}
	It follows from $(\kappa,\lambda)\in\{(0,1),(1,1)\}$ that
	\begin{align*}
		H_M\big(q^{\sigma}\sB_0\sA_{I}\big)+(-1)^{\kappa}H_M\big(q^{\sigma}\sB_0\sA_{I\!I}\big)&=H_M\big(\sH(q)\big)+(-1)^{\kappa}H_M\big(\hat{\sH}(q)\big)\\
		&=0.
	\end{align*}
	Thus, \eqref{eq:AB0-Type-I.3-2-Eq2} is established.
	
	\item
	\textit{Examining \eqref{eq:AB-Type-I.3}}. Let us keep in mind that we have made the substitutions in \eqref{eq:sAI-Type-I.3}, \eqref{eq:sAII-Type-I.3}, \eqref{eq:sBI-Type-I.3} and \eqref{eq:sBII-Type-I.3}: $(\ell,m)\mapsto (2\ell+1,2m+1)$, $a=1$ and $b=4$. Also, $\sigma=-(2\ell+12m+7)k$. Then \eqref{eq:AB-Type-I.3} becomes
	\begin{align}\label{eq:AB-Type-I.3-2-Eq2}
		H_M\big(q^{\sigma}\sA_{I}\sB_{I}\big) =-(-1)^{\kappa+\lambda}H_M\big(q^{\sigma}\sA_{I\!I}\sB_{I\!I}\big).
	\end{align}
	In Theorem \ref{th:cancel-2}, we further set
	\begin{align*}
		\begin{array}{rclp{4.5cm}rcl}
			A' & \mapsto & \xi \mu && B' & \mapsto & 2\tau\mu\\
			w & \mapsto & \multicolumn{5}{l}{-(2\ell+12m+7)k+\xi k+4\tau k}\\
		\end{array}
	\end{align*}
	Then
	\begin{align*}
		\chw = -(2\ell+12m+7)k +(2\ell+1-\xi) k+4(2m+1-\tau) k.
	\end{align*}
	So,
	\begin{align*}
		\sH(q)&=q^{\sigma}\sA_{I}\sB_{I},\\
		\check{\sH}(q)&=q^{\sigma}\sA_{I\!I}\sB_{I\!I}.
	\end{align*}
	Now we compute that
	\begin{align*}
		\frac{2(A'Bu+AB'v)}{d^*AB} = \frac{2(\xi +4\tau )k}{d^*}.
	\end{align*}
	Similarly, if $d^* \nmid 2(\xi +4\tau )k$, then $(\xi+4\tau) k \not\equiv 0 \pmod{d_0}$, and thus,
	\begin{align*}
		w&\equiv (\xi+4\tau) k \not\equiv 0 \pmod{d_0},\\
		\chw&\equiv -(\xi+4\tau) k \not\equiv 0 \pmod{d_0},
	\end{align*}
	implying from Theorem \ref{th:UMH}~(i) that
	\begin{align*}
		H_M\big(q^{\sigma}\sA_{I}\sB_{I}\big) =-(-1)^{\kappa+\lambda}H_M\big(q^{\sigma}\sA_{I\!I}\sB_{I\!I}\big)=0.
	\end{align*}
	Below, we assume that $d^* \mid 2(\xi +4\tau )k$. Then the first assumption in \eqref{eq:assump-2-d*} is satisfied. We may also assume that $w$ is a multiple of $d^*=d_0$ according to Theorem \ref{th:cancel-2}~(i). With the choice of $K=1$, we compute that
	\begin{align*}
		\frac{B(A-2A')u^2+A(B-2B')uv+2ABuw\cdot K}{A(Av^2+Bu^2)}&=-1,\\
		\frac{B(A-2A')uv+A(B-2B')v^2+2ABvw\cdot K}{B(Av^2+Bu^2)}&=-2.
	\end{align*}
	So the remaining assumptions in \eqref{eq:assump-2-d*} are satisfied. Finally, we compute that
	\begin{align*}
		\varepsilon=(2\ell+1)\kappa+(4m+2)\lambda.
	\end{align*}
	Thus, by Theorem \ref{th:cancel-2}~(ii),
	\begin{align*}
		H_M\big(\sH(q)\big) = (-1)^{\kappa}H_M\big(\check{\sH}(q)\big).
	\end{align*}
	It follows from $(\kappa,\lambda)\in\{(0,1),(1,1)\}$ that
	\begin{align*}
		H_M\big(q^{\sigma}\sA_{I}\sB_{I}\big)+(-1)^{\kappa+\lambda}H_M\big(q^{\sigma}\sA_{I\!I}\sB_{I\!I}\big)&=H_M\big(\sH(q)\big)+(-1)^{\kappa+\lambda}H_M\big(\check{\sH}(q)\big)\\
		&=0.
	\end{align*}
	Thus, \eqref{eq:AB-Type-I.3-2-Eq2} is established.
	
\end{itemize}

\subsubsection{Equation \eqref{eq:I.3-3}}

Recall that $(\ell,m)\mapsto (2\ell+1,2m+1)$, $a=2$ and $b=1$. Also, $M=16\ell+2m+9$ and $\sigma=-(10\ell+2m+6)k$. Further, $(\kappa,\lambda)\in\{(1,0),(1,1)\}$ and $k$ is such that $\gcd(k,M)=1$.

In Corollary \ref{coro:UMH=0-J} and Theorems \ref{th:cancel-1} and \ref{th:cancel-2}, we set
\begin{align*}
	\begin{array}{rclp{3cm}rcl}
		\kappa & \mapsto & (2\ell+1)\kappa && \lambda & \mapsto & (2m+1)\lambda\\
		u & \mapsto & 2(2\ell+1)k && v & \mapsto & (2m+1)k\\
		A & \mapsto & (2\ell+1)\mu && B & \mapsto & 2(2m+1)\mu\\
		M & \mapsto & \multicolumn{5}{l}{16\ell+2m+9}\\
	\end{array}
\end{align*}
Let
\begin{align*}
	d_0=\gcd\big(2(2\ell+1),2m+1\big)=\gcd\big(2\ell+1,2m+1\big).
\end{align*}
Then
\begin{align*}
	d=\gcd(u,v)=d_0 k.
\end{align*}
Noticing that $M=8(2\ell+1)+(2m+1)$, and that $k$ is coprime to $M$, we have
\begin{align*}
	d^*&=\gcd(u,v,M)=d_0,\\
	d_u&=\gcd(u,M)=d_0,\\
	d_v&=\gcd(v,M)=d_0.
\end{align*}
We compute that
\begin{align*}
	\frac{Av\cdot dM}{d_u(Av^2+Bu^2)}&=1,\\
	\frac{Bu\cdot dM}{d_v(Av^2+Bu^2)}&=4.
\end{align*}
Thus, the second and third assumptions in \eqref{eq:div-assump-coro}, \eqref{eq:assump-d*} and \eqref{eq:assump-2-d*} are satisfied.

\begin{itemize}[leftmargin=*,align=left,itemsep=5pt]
	\renewcommand{\labelitemi}{\scriptsize$\blacktriangleright$}
	
	\item 
	\textit{Examining \eqref{eq:A0B0-Type-I.3}}. Let us keep in mind that we have made the substitutions in \eqref{eq:sA0-Type-I.3} and \eqref{eq:sB0-Type-I.3}: $(\ell,m)\mapsto (2\ell+1,2m+1)$, $a=2$ and $b=1$. Also, $\sigma=-(10\ell+2m+6)k$. We want to show \eqref{eq:A0B0-Type-I.3},
	\begin{align}\label{eq:A0B0-Type-I.3-3-Eq1}
		H_M\big(q^{\sigma}\sA_0\sB_{0}\big) =0.
	\end{align}
	In Corollary \ref{coro:UMH=0-J}, we further set
	\begin{align*}
		\begin{array}{rclp{3cm}rcl}
			A' & \mapsto & 0 && B' & \mapsto & 0\\
			w & \mapsto & \multicolumn{5}{l}{-(10\ell+2m+6)k}\\
		\end{array}
	\end{align*}
	So,
	\begin{align*}
		\sH(q)=q^{\sigma}\sA_0\sB_0.
	\end{align*}
	We may further assume that $w$ is a multiple of $d^*$. Otherwise, we know from Theorem \ref{th:UMH}~(i) that
	\begin{align*}
		H_M\big(\sH(q)\big) = 0,
	\end{align*}
	which gives
	\begin{align*}
		H_M\big(q^{\sigma}\sA_0\sB_0\big) =0.
	\end{align*} 
	We further compute that
	\begin{align*}
		\frac{-v\kappa+u\lambda}{d}=-\frac{(2\ell+1)(2m+1)}{d_0}(\kappa-2\lambda).
	\end{align*}
	The fact that $d_0=\gcd(2\ell+1,2m+1)$ then implies that $(2\ell+1)(2m+1)/d_0$ is an odd integer. Thus, \eqref{eq:div-assump-coro} is satisfied since $(\kappa,\lambda)\in\{(1,0),(1,1)\}$. Finally, we choose $J=0$ in \eqref{eq:div-assump-coro-J}. Then
	\begin{equation*}
		\scalebox{0.85}{%
			$
			\begin{aligned}
			\dfrac{2dMAv\cdot J-2dAvw-dAuv+dBu^2+2dA'uv-2dB'u^2-Auv^2-Bu^3}{2d(Av^2+Bu^2)}&=\frac{d_0-(2\ell+1)}{d_0},\\
			\dfrac{2dMBu\cdot J-2dBuw+dAv^2-dBuv-2dA'v^2+2dB'uv+Av^3+Bu^2v}{2d(Av^2+Bu^2)}&=\frac{5d_0+(2m+1)}{2d_0}.
			\end{aligned}
			$}
	\end{equation*}
	Recall that $2\ell+1\equiv 2m+1\equiv d_0 \pmod{2d_0}$ since $d_0=\gcd(2\ell+1,2m+1)$ is odd. So \eqref{eq:div-assump-coro-J} is also satisfied. We conclude by Corollary \ref{coro:UMH=0-J} that
	\begin{align*}
		H_M\big(\sH(q)\big) = 0.
	\end{align*}
	Thus, \eqref{eq:A0B0-Type-I.3-3-Eq1} is established.
	
	\item
	\textit{Examining \eqref{eq:A0B-Type-I.3}}. Let us keep in mind that we have made the substitutions in \eqref{eq:sA0-Type-I.3}, \eqref{eq:sBI-Type-I.3} and \eqref{eq:sBII-Type-I.3}: $(\ell,m)\mapsto (2\ell+1,2m+1)$, $a=2$ and $b=1$. Also, $\sigma=-(10\ell+2m+6)k$. Then \eqref{eq:A0B-Type-I.3} becomes
	\begin{align}\label{eq:A0B-Type-I.3-3-Eq2}
		H_M\big(q^{\sigma}\sA_0\sB_{I}\big) =-(-1)^{\lambda}H_M\big(q^{\sigma}\sA_0\sB_{I\!I}\big).
	\end{align}
	In Theorem \ref{th:cancel-1}, we further set
	\begin{align*}
		\begin{array}{rclp{3.5cm}rcl}
			A' & \mapsto & 0 && B' & \mapsto & 2\tau\mu\\
			w & \mapsto & \multicolumn{5}{l}{-(10\ell+2m+6)k+\tau k}\\
		\end{array}
	\end{align*}
	Then
	\begin{align*}
		\hw = -(10\ell+2m+6)k +(2m+1-\tau) k.
	\end{align*}
	So,
	\begin{align*}
		\sH(q)&=q^{\sigma}\sA_0\sB_{I},\\
		\hat{\sH}(q)&=q^{\sigma}\sA_0\sB_{I\!I}.
	\end{align*}
	Now we compute that
	\begin{align*}
		\frac{2B'v}{d^* B} = \frac{2\tau k}{d^*}.
	\end{align*}
	If $d^* \nmid 2\tau k$, then $\tau k \not\equiv 0 \pmod{d_0}$, and thus,
	\begin{align*}
		w&=-5 (2\ell+1)k-(2m+1)k+\tau k\equiv \tau k \not\equiv 0 \pmod{d_0},\\
		\hw&=-5 (2\ell+1)k-\tau k\equiv -\tau k \not\equiv 0 \pmod{d_0}.
	\end{align*}
	So $w$ and $\hw$ are nonmultiples of $d_0=d^*$. In this case we know from Theorem \ref{th:UMH}~(i) that
	\begin{align*}
		H_M\big(\sH(q)\big) = H_M\big(\hat{\sH}(q)\big) = 0,
	\end{align*}
	which gives
	\begin{align*}
		H_M\big(q^{\sigma}\sA_0\sB_{I}\big) =-(-1)^{\lambda}H_M\big(q^{\sigma}\sA_0\sB_{I\!I}\big)=0.
	\end{align*}
	Below, we assume that $d^* \mid 2\tau k$. Then the first assumption in \eqref{eq:assump-d*} is satisfied. We may also assume that $w$ is a multiple of $d^*=d_0$ according to Theorem \ref{th:cancel-1}~(i). It is easily seen that $d^*k$ is a divisor of $\gcd(u,v,w)$. Thus, we solve the following stronger system
	\begin{align*}
		\left\{
		\begin{aligned}
			K&\equiv 1 \pmod{M/d^*},\\
			K&\equiv 0 \pmod{d/(d^*k)},
		\end{aligned}
		\right.
	\end{align*}
	and choose $K=1$. Finally, we compute that
	\begin{align*}
		\frac{(A-2A')v^2-(B-2B')uv-2Buw\cdot K}{Av^2+Bu^2}&=5,\\
		\frac{B(A-2A')uv+A(B-2B')v^2+2ABvw\cdot K}{B(Av^2+Bu^2)}&=-1.
	\end{align*}
	So the remaining assumptions in \eqref{eq:assump-d*} are satisfied. Finally, we compute that
	\begin{align*}
		\epsilon=(10\ell+5)\kappa+(2m+1)\lambda.
	\end{align*}
	Thus, by Theorem \ref{th:cancel-1}~(ii),
	\begin{align*}
		H_M\big(\sH(q)\big) = (-1)^{\kappa+\lambda}H_M\big(\hat{\sH}(q)\big).
	\end{align*}
	It follows from $(\kappa,\lambda)\in\{(1,0),(1,1)\}$ that
	\begin{align*}
		H_M\big(q^{\sigma}\sA_0\sB_{I}\big)+(-1)^{\lambda}H_M\big(q^{\sigma}\sA_0\sB_{I\!I}\big)&=H_M\big(\sH(q)\big)+(-1)^{\lambda}H_M\big(\hat{\sH}(q)\big)\\
		&=0.
	\end{align*}
	Thus, \eqref{eq:A0B-Type-I.3-3-Eq2} is established.
	
	\item 
	\textit{Examining \eqref{eq:AB0-Type-I.3}}.
	Let us keep in mind that we have made the substitutions in \eqref{eq:sAI-Type-I.3}, \eqref{eq:sAII-Type-I.3} and \eqref{eq:sB0-Type-I.3}: $(\ell,m)\mapsto (2\ell+1,2m+1)$, $a=2$ and $b=1$. Also, $\sigma=-(10\ell+2m+6)k$. Then \eqref{eq:AB0-Type-I.3} becomes
	\begin{align}\label{eq:AB0-Type-I.3-3-Eq2}
		H_M\big(q^{\sigma}\sB_0\sA_{I}\big) =-(-1)^{\kappa}H_M\big(q^{\sigma}\sB_0\sA_{I\!I}\big).
	\end{align}
	To make use of Theorem \ref{th:cancel-1}, we need to swap the choice of $(\kappa,u,A)$ and $(\lambda,v,B)$ in our initial setting. In other words, in Theorem \ref{th:cancel-1}, we set
	\begin{align*}
		\begin{array}{rclp{3cm}rcl}
			\kappa & \mapsto & (2m+1)\lambda && \lambda & \mapsto & (2\ell+1)\kappa\\
			u & \mapsto & (2m+1)k && v & \mapsto & 2(2\ell+1)k\\
			A & \mapsto & 2(2m+1)\mu && B & \mapsto & (2\ell+1)\mu\\
			A' & \mapsto & 0 && B' & \mapsto & \xi \mu\\
			M & \mapsto & \multicolumn{5}{l}{16\ell+2m+9}\\
			w & \mapsto & \multicolumn{5}{l}{-(10\ell+2m+6)k+2\xi k}\\
		\end{array}
	\end{align*}
	Then
	\begin{align*}
		\hw = -(10\ell+2m+6)k +2(2\ell+1-\xi) k.
	\end{align*}
	So,
	\begin{align*}
		\sH(q)&=q^{\sigma}\sB_0\sA_{I},\\
		\hat{\sH}(q)&=q^{\sigma}\sB_0\sA_{I\!I}.
	\end{align*}
	Now we compute that
	\begin{align*}
		\frac{2B'v}{d^* B} = \frac{4\xi k}{d^*}.
	\end{align*}
	If $d^* \nmid 4\xi k$, then $2\xi k \not\equiv 0 \pmod{d_0}$, and thus,
	\begin{align*}
		w&\equiv 2\xi k \not\equiv 0 \pmod{d_0},\\
		\hw&\equiv -2\xi k \not\equiv 0 \pmod{d_0},
	\end{align*}
	implying from Theorem \ref{th:UMH}~(i) that
	\begin{align*}
		H_M\big(q^{\sigma}\sB_0\sA_{I}\big) =-(-1)^{\kappa}H_M\big(q^{\sigma}\sB_0\sA_{I\!I}\big)=0.
	\end{align*}
	Below, we assume that $d^* \mid 4\xi k$. Then the first assumption in \eqref{eq:assump-d*} is satisfied. We may also assume that $w$ is a multiple of $d^*=d_0$ according to Theorem \ref{th:cancel-1}~(i). With the choice of $K=1$, we compute that
	\begin{align*}
		\frac{(A-2A')v^2-(B-2B')uv-2Buw\cdot K}{Av^2+Bu^2}&=2,\\
		\frac{B(A-2A')uv+A(B-2B')v^2+2ABvw\cdot K}{B(Av^2+Bu^2)}&=-4.
	\end{align*}
	So the remaining assumptions in \eqref{eq:assump-d*} are satisfied. Finally, we compute that
	\begin{align*}
		\epsilon=(8\ell+4)\kappa+(4m+2)\lambda.
	\end{align*}
	Thus, by Theorem \ref{th:cancel-1}~(ii),
	\begin{align*}
		H_M\big(\sH(q)\big) = H_M\big(\hat{\sH}(q)\big).
	\end{align*}
	It follows from $(\kappa,\lambda)\in\{(1,0),(1,1)\}$ that
	\begin{align*}
		H_M\big(q^{\sigma}\sB_0\sA_{I}\big)+(-1)^{\kappa}H_M\big(q^{\sigma}\sB_0\sA_{I\!I}\big)&=H_M\big(\sH(q)\big)+(-1)^{\kappa}H_M\big(\hat{\sH}(q)\big)\\
		&=0.
	\end{align*}
	Thus, \eqref{eq:AB0-Type-I.3-3-Eq2} is established.
	
	\item
	\textit{Examining \eqref{eq:AB-Type-I.3}}. Let us keep in mind that we have made the substitutions in \eqref{eq:sAI-Type-I.3}, \eqref{eq:sAII-Type-I.3}, \eqref{eq:sBI-Type-I.3} and \eqref{eq:sBII-Type-I.3}: $(\ell,m)\mapsto (2\ell+1,2m+1)$, $a=2$ and $b=1$. Also, $\sigma=-(10\ell+2m+6)k$. Then \eqref{eq:AB-Type-I.3} becomes
	\begin{align}\label{eq:AB-Type-I.3-3-Eq2}
		H_M\big(q^{\sigma}\sA_{I}\sB_{I}\big) =-(-1)^{\kappa+\lambda}H_M\big(q^{\sigma}\sA_{I\!I}\sB_{I\!I}\big).
	\end{align}
	In Theorem \ref{th:cancel-2}, we further set
	\begin{align*}
		\begin{array}{rclp{4.5cm}rcl}
			A' & \mapsto & \xi \mu && B' & \mapsto & 2\tau\mu\\
			w & \mapsto & \multicolumn{5}{l}{-(10\ell+2m+6)k+2\xi k+\tau k}\\
		\end{array}
	\end{align*}
	Then
	\begin{align*}
		\chw = -(10\ell+2m+6)k +2(2\ell+1-\xi) k+(2m+1-\tau) k.
	\end{align*}
	So,
	\begin{align*}
		\sH(q)&=q^{\sigma}\sA_{I}\sB_{I},\\
		\check{\sH}(q)&=q^{\sigma}\sA_{I\!I}\sB_{I\!I}.
	\end{align*}
	Now we compute that
	\begin{align*}
		\frac{2(A'Bu+AB'v)}{d^*AB} = \frac{2(2\xi +\tau )k}{d^*}.
	\end{align*}
	Similarly, if $d^* \nmid 2(2\xi +\tau )k$, then $(2\xi+\tau) k \not\equiv 0 \pmod{d_0}$, and thus,
	\begin{align*}
		w&\equiv (2\xi+\tau) k \not\equiv 0 \pmod{d_0},\\
		\chw&\equiv -(2\xi+\tau) k \not\equiv 0 \pmod{d_0},
	\end{align*}
	implying from Theorem \ref{th:UMH}~(i) that
	\begin{align*}
		H_M\big(q^{\sigma}\sA_{I}\sB_{I}\big) =-(-1)^{\kappa+\lambda}H_M\big(q^{\sigma}\sA_{I\!I}\sB_{I\!I}\big)=0.
	\end{align*}
	Below, we assume that $d^* \mid 2(2\xi +\tau )k$. Then the first assumption in \eqref{eq:assump-2-d*} is satisfied. We may also assume that $w$ is a multiple of $d^*=d_0$ according to Theorem \ref{th:cancel-2}~(i). With the choice of $K=1$, we compute that
	\begin{align*}
		\frac{B(A-2A')u^2+A(B-2B')uv+2ABuw\cdot K}{A(Av^2+Bu^2)}&=-4,\\
		\frac{B(A-2A')uv+A(B-2B')v^2+2ABvw\cdot K}{B(Av^2+Bu^2)}&=-1.
	\end{align*}
	So the remaining assumptions in \eqref{eq:assump-2-d*} are satisfied. Finally, we compute that
	\begin{align*}
		\varepsilon=(8\ell+4)\kappa+(2m+1)\lambda.
	\end{align*}
	Thus, by Theorem \ref{th:cancel-2}~(ii),
	\begin{align*}
		H_M\big(\sH(q)\big) = (-1)^{\lambda}H_M\big(\check{\sH}(q)\big).
	\end{align*}
	It follows from $(\kappa,\lambda)\in\{(1,0),(1,1)\}$ that
	\begin{align*}
		H_M\big(q^{\sigma}\sA_{I}\sB_{I}\big)+(-1)^{\kappa+\lambda}H_M\big(q^{\sigma}\sA_{I\!I}\sB_{I\!I}\big)&=H_M\big(\sH(q)\big)+(-1)^{\kappa+\lambda}H_M\big(\check{\sH}(q)\big)\\
		&=0.
	\end{align*}
	Thus, \eqref{eq:AB-Type-I.3-3-Eq2} is established.
	
\end{itemize}

\subsection{Type II --- Theorem \ref{th:Type-II}}

This section is devoted to the three coefficient-vanishing results in Theorem \ref{th:Type-II}. Here we will use a different strategy in comparison to how we treat Type I. We summarize the basic idea as follows.

We deduce from Theorem \ref{th:theta-product} with certain common factors extracted and powers of $(-1)$ modified that, for any $\ell\ge 1$,
\begin{align*}
	f\big((-1)^{\kappa}q^{ak},(-1)^{\kappa}q^{-ak+M\mu}\big)^{\ell}=\sum_{s=0}^{\ell-1}\sA_s \sF_{s},
\end{align*}
where each $\sF_{\star}$ is a series in $q^M$, and for $0\le s\le \ell-1$,
\begin{align*}
	\sA_s=q^{aks} f\big((-1)^{\kappa \ell}q^{ak\ell+M\mu s},(-1)^{\kappa \ell}q^{-ak\ell+M\mu (\ell-s)}\big).
\end{align*}
More generally, we consider
\begin{align}
	\sA:=q^{a\xi k} f\big((-1)^{\kappa \ell}q^{ak\ell+M\mu \xi},(-1)^{\kappa \ell}q^{-ak\ell+M\mu (\ell-\xi)}\big),\label{eq:sA-Type-II}
\end{align}
for generic $\xi\in\mathbb{Z}$.

Similarly, by Theorem \ref{th:theta-product}, we write for any $m\ge 1$,
\begin{align*}
	f\big((-1)^{\lambda}q^{bk},(-1)^{\lambda}q^{-bk+2M\mu}\big)^{m}=\sum_{t=0}^{m-1}\sB_t \sG_{t},
\end{align*}
where each $\sG_{\star}$ is a series in $q^M$, and for $0\le t\le m-1$,
\begin{align*}
	\sB_t=q^{bkt} f\big((-1)^{\lambda m}q^{bkm+2M\mu t},(-1)^{\lambda m}q^{-bkm+2M\mu (m-t)}\big).
\end{align*}
More generally, we consider
\begin{align}
	\sB:=q^{b\tau k} f\big((-1)^{\lambda m}q^{bkm+2M\mu \tau},(-1)^{\lambda m}q^{-bkm+2M\mu (m-\tau)}\big),\label{eq:sB-Type-II}
\end{align}
for generic $\tau\in\mathbb{Z}$.

With all parameters chosen in each case below, our generic target is to show the following by Corollary \ref{coro:UMH=0-J}:
\begin{align}\label{eq:AB-Type-II}
	\mathbf{H}_M\big(q^{\sigma}\sA\sB\big) =0.
\end{align}
Once the above relation is established, it is safe to conclude that
\begin{align}
	\mathbf{H}_{M}\Big(q^{\sigma}f\big((-1)^{\kappa}q^{ak},(-1)^{\kappa}q^{-ak+ M\mu}\big)^{\ell}f\big((-1)^{\lambda}q^{bk},(-1)^{\lambda}q^{-bk+2M\mu}\big)^{m}\Big)=0.
\end{align}

\subsubsection{Equation \eqref{eq:II-1}}

Recall that $(\ell,m)\mapsto (2\ell+1,2m+1)$ with $\gcd(2\ell+1,2m+1)=1$, $a=2m+1$ and $b=2\ell+1$. Also, $M=2\ell+4m+3$ and $\sigma=2(2m+1)^2 k$. Further, $(\kappa,\lambda)\in\{(0,1),(1,0)\}$ and $k$ is such that $\gcd(k,M)=1$.

In Corollary \ref{coro:UMH=0-J}, we set
\begin{align*}
	\begin{array}{rclp{3cm}rcl}
		\kappa & \mapsto & (2\ell+1)\kappa && \lambda & \mapsto & (2m+1)\lambda\\
		u & \mapsto & (2m+1)(2\ell+1)k && v & \mapsto & (2\ell+1)(2m+1)k\\
		A & \mapsto & (2\ell+1)\mu && B & \mapsto & 2(2m+1)\mu\\
		A' & \mapsto & \xi \mu && B' & \mapsto & 2\tau\mu\\
		M & \mapsto & \multicolumn{5}{l}{2\ell+4m+3}\\
		w & \mapsto & \multicolumn{5}{l}{2(2m+1)^2 k+(2m+1)\xi k+(2\ell+1)\tau k}\\
	\end{array}
\end{align*}
So,
\begin{align*}
	\sH(q)=q^{\sigma}\sA\sB.
\end{align*}
We find that
\begin{align*}
	d=\gcd(u,v)=(2\ell+1)(2m+1) k.
\end{align*}
Also, noticing that $M=(2\ell+1)+2(2m+1)$ with $\gcd(2\ell+1,2m+1)=1$, and that $k$ is coprime to $M$, we have
\begin{align*}
	d^*&=\gcd(u,v,M)=1,\\
	d_u&=\gcd(u,M)=1,\\
	d_v&=\gcd(v,M)=1.
\end{align*}
Then $d^*\mid w$. We further compute that
\begin{align*}
	\frac{Av\cdot dM}{d_u(Av^2+Bu^2)}&=2\ell+1,\\
	\frac{Bu\cdot dM}{d_v(Av^2+Bu^2)}&=4m+2,\\
	\frac{-v\kappa+u\lambda}{d}&=-(2\ell+1)\kappa+(2m+1)\lambda.
\end{align*}
Thus, \eqref{eq:div-assump-coro} and \eqref{eq:-1-assump} are satisfied by recalling that $(\kappa,\lambda)\in\{(0,1),(1,0)\}$. Finally, we choose $J=J_0 k$ in \eqref{eq:div-assump-coro-J}. Then
\begin{equation*}
	\scalebox{0.8}{%
		$
		\begin{aligned}
			\dfrac{2dMAv\cdot J-2dAvw-dAuv+dBu^2+2dA'uv-2dB'u^2-Auv^2-Bu^3}{2d(Av^2+Bu^2)}&=\dfrac{J_0-2m-1-\tau}{2m+1},\\
			\dfrac{2dMBu\cdot J-2dBuw+dAv^2-dBuv-2dA'v^2+2dB'uv+Av^3+Bu^2v}{2d(Av^2+Bu^2)}&=\dfrac{2J_0+2\ell-4m-1-\xi}{2\ell+1}.
		\end{aligned}
		$}
\end{equation*}
We may further choose $J_0$ so that
\begin{align*}
	\left\{
	\begin{aligned}
		J_0-2m-1-\tau&\equiv 0 \pmod{2m+1},\\
		2J_0+2\ell-4m-1-\xi&\equiv 0 \pmod{2\ell+1}.
	\end{aligned}
	\right.
\end{align*}
Since $\gcd(2\ell+1,2m+1)=1$, we know that the above system is solvable. So \eqref{eq:div-assump-coro-J} is also satisfied. We conclude by Corollary \ref{coro:UMH=0-J} that
\begin{align*}
	\mathbf{H}_M\big(\sH(q)\big) = 0,
\end{align*}
and therefore confirm \eqref{eq:AB-Type-II}.

\subsubsection{Equation \eqref{eq:II-2}}

Recall that $(\ell,m)\mapsto (2\ell+1,2m+2)$ with $\gcd(2\ell+1,2m+2)=1$, $a=2m+2$ and $b=2\ell+1$. Also, $M=2\ell+4m+5$ and $\sigma=2(2m+2)^2 k$. Further, $(\kappa,\lambda)\in\{(1,0),(1,1)\}$ and $k$ is such that $\gcd(k,M)=1$.

In Corollary \ref{coro:UMH=0-J}, we set
\begin{align*}
\begin{array}{rclp{3cm}rcl}
\kappa & \mapsto & (2\ell+1)\kappa && \lambda & \mapsto & (2m+2)\lambda\\
u & \mapsto & (2m+2)(2\ell+1)k && v & \mapsto & (2\ell+1)(2m+2)k\\
A & \mapsto & (2\ell+1)\mu && B & \mapsto & 2(2m+2)\mu\\
A' & \mapsto & \xi \mu && B' & \mapsto & 2\tau\mu\\
M & \mapsto & \multicolumn{5}{l}{2\ell+4m+5}\\
w & \mapsto & \multicolumn{5}{l}{2(2m+2)^2 k+(2m+2)\xi k+(2\ell+1)\tau k}\\
\end{array}
\end{align*}
So,
\begin{align*}
\sH(q)=q^{\sigma}\sA\sB.
\end{align*}
We find that
\begin{align*}
d=\gcd(u,v)=(2\ell+1)(2m+2) k.
\end{align*}
Also, noticing that $M=(2\ell+1)+2(2m+2)$ with $\gcd(2\ell+1,2m+2)=1$, and that $k$ is coprime to $M$, we have
\begin{align*}
d^*&=\gcd(u,v,M)=1,\\
d_u&=\gcd(u,M)=1,\\
d_v&=\gcd(v,M)=1.
\end{align*}
Then $d^*\mid w$. We further compute that
\begin{align*}
\frac{Av\cdot dM}{d_u(Av^2+Bu^2)}&=2\ell+1,\\
\frac{Bu\cdot dM}{d_v(Av^2+Bu^2)}&=4m+4,\\
\frac{-v\kappa+u\lambda}{d}&=-(2\ell+1)\kappa+(2m+2)\lambda.
\end{align*}
Thus, \eqref{eq:div-assump-coro} and \eqref{eq:-1-assump} are satisfied by recalling that $(\kappa,\lambda)\in\{(1,0),(1,1)\}$. Finally, we choose $J=J_0 k$ in \eqref{eq:div-assump-coro-J}. Then
\begin{equation*}
\scalebox{0.8}{%
	$
	\begin{aligned}
	\dfrac{2dMAv\cdot J-2dAvw-dAuv+dBu^2+2dA'uv-2dB'u^2-Auv^2-Bu^3}{2d(Av^2+Bu^2)}&=\dfrac{J_0-2m-2-\tau}{2m+2},\\
	\dfrac{2dMBu\cdot J-2dBuw+dAv^2-dBuv-2dA'v^2+2dB'uv+Av^3+Bu^2v}{2d(Av^2+Bu^2)}&=\dfrac{2J_0+2\ell-4m-3-\xi}{2\ell+1}.
	\end{aligned}
	$}
\end{equation*}
We may further choose $J_0$ so that
\begin{align*}
\left\{
\begin{aligned}
J_0-2m-2-\tau&\equiv 0 \pmod{2m+2},\\
2J_0+2\ell-4m-3-\xi&\equiv 0 \pmod{2\ell+1}.
\end{aligned}
\right.
\end{align*}
Since $\gcd(2\ell+1,2m+2)=1$, we know that the above system is solvable. So \eqref{eq:div-assump-coro-J} is also satisfied. We conclude by Corollary \ref{coro:UMH=0-J} that
\begin{align*}
H_M\big(\sH(q)\big) = 0,
\end{align*}
and therefore confirm \eqref{eq:AB-Type-II}.

\subsubsection{Equation \eqref{eq:II-3}}

Recall that $(\ell,m)\mapsto (2\ell+2,2m+1)$ with $\gcd(2\ell+2,2m+1)=1$, $a=2m+1$ and $b=4\ell+4$. Also, $M=4\ell+2m+5$ and $\sigma=3(2\ell+2)^2 k$. Further, $(\kappa,\lambda)\in\{(0,1),(1,1)\}$ and $k$ is such that $\gcd(k,M)=1$.

In Corollary \ref{coro:UMH=0-J}, we set
\begin{align*}
\begin{array}{rclp{3cm}rcl}
\kappa & \mapsto & (2\ell+2)\kappa && \lambda & \mapsto & (2m+1)\lambda\\
u & \mapsto & (2m+1)(2\ell+2)k && v & \mapsto & (4\ell+4)(2m+1)k\\
A & \mapsto & (2\ell+2)\mu && B & \mapsto & 2(2m+1)\mu\\
A' & \mapsto & \xi \mu && B' & \mapsto & 2\tau\mu\\
M & \mapsto & \multicolumn{5}{l}{4\ell+2m+5}\\
w & \mapsto & \multicolumn{5}{l}{3(2\ell+2)^2 k+(2m+1)\xi k+(4\ell+4)\tau k}\\
\end{array}
\end{align*}
So,
\begin{align*}
\sH(q)=q^{\sigma}\sA\sB.
\end{align*}
We find that
\begin{align*}
d=\gcd(u,v)=(2\ell+2)(2m+1) k.
\end{align*}
Also, noticing that $M=2(2\ell+2)+(2m+1)$ with $\gcd(2\ell+2,2m+1)=1$, and that $k$ is coprime to $M$, we have
\begin{align*}
d^*&=\gcd(u,v,M)=1,\\
d_u&=\gcd(u,M)=1,\\
d_v&=\gcd(v,M)=1.
\end{align*}
Then $d^*\mid w$. We further compute that
\begin{align*}
\frac{Av\cdot dM}{d_u(Av^2+Bu^2)}&=2\ell+2,\\
\frac{Bu\cdot dM}{d_v(Av^2+Bu^2)}&=2m+1,\\
\frac{-v\kappa+u\lambda}{d}&=-(4\ell+4)\kappa+(2m+1)\lambda.
\end{align*}
Thus, \eqref{eq:div-assump-coro} and \eqref{eq:-1-assump} are satisfied by recalling that $(\kappa,\lambda)\in\{(0,1),(1,1)\}$. Finally, we choose $J=J_0 k$ in \eqref{eq:div-assump-coro-J}. Then
\begin{equation*}
\scalebox{0.8}{%
	$
	\begin{aligned}
	\dfrac{2dMAv\cdot J-2dAvw-dAuv+dBu^2+2dA'uv-2dB'u^2-Auv^2-Bu^3}{2d(Av^2+Bu^2)}&=\dfrac{J_0-3\ell-3-\tau}{2m+1},\\
	\dfrac{2dMBu\cdot J-2dBuw+dAv^2-dBuv-2dA'v^2+2dB'uv+Av^3+Bu^2v}{2d(Av^2+Bu^2)}&=\dfrac{J_0-\xi}{2\ell+2}.
	\end{aligned}
	$}
\end{equation*}
We may further choose $J_0$ so that
\begin{align*}
\left\{
\begin{aligned}
J_0-3\ell-3-\tau&\equiv 0 \pmod{2m+1},\\
J_0-\xi&\equiv 0 \pmod{2\ell+2}.
\end{aligned}
\right.
\end{align*}
Since $\gcd(2\ell+2,2m+1)=1$, we know that the above system is solvable. So \eqref{eq:div-assump-coro-J} is also satisfied. We conclude by Corollary \ref{coro:UMH=0-J} that
\begin{align*}
H_M\big(\sH(q)\big) = 0,
\end{align*}
and therefore confirm \eqref{eq:AB-Type-II}.

\subsection{Seven families of coefficient functions}

All results in Corollary \ref{coro:explicit-vanishing} are direct consequences of Theorems \ref{th:Type-I}--\ref{th:Type-II} as long as we notice that $H_M\big(G(q)\big)=0$ provided that $H_M\big(G(q)\cdot F(q^M)\big)=0$ for any given series $F$ in $q^M$.

\section{Conclusion}

The entire project began in an attempt to understand the underlying patterns in the coefficient-vanishing phenomena associated with theta series. It turns out that most of the known results on this topic can be covered by several general relations.

Also of significance is the unified strategy presented in Section \ref{sec:outline}, which contains two aspects:
\begin{enumerate}[label={(\thesection .\arabic*)},leftmargin=*,labelsep=0.2cm,align=left]
	
	\item It provides an automatic way to verify if a coefficient-vanishing result, with or without free parameters, holds true. In fact, if such a result is discovered experimentally, one may try to fit it into the four models discussed in Section \ref{sec:proof}, or other models of a similar nature. The remaining task then becomes verifying some relations in connection with products of two theta series under the action of the $\mathbf{H}$-operator. Finally, such a verification relies on checking certain divisibility criteria presented in Corollary \ref{coro:UMH=0-J} and Theorems \ref{th:cancel-1} and \ref{th:cancel-2}.
	
	\item It allows us to manually construct specific or generic coefficient-vanishing results. For instance, the discovery of Theorem \ref{th:Type-II} relies only on one example or two presented in \cite{Tang2022c}. Roughly speaking, one may fix some of the free parameters in a generic model, and then use the divisibility conditions in Section \ref{sec:dissecting} to constrain the choice of other parameters so that these conditions remain valid.
	
\end{enumerate}

The divisibility criteria in Corollary \ref{coro:UMH=0-J} and Theorems \ref{th:cancel-1} and \ref{th:cancel-2} look very sharp, but they work effectively in practice, at least for most coefficient-vanishing results we had encountered. So we expect that our approach may light up a general theory. However, it should be pointed out that two ``clouds'' need to be taken into serious consideration.

First, what if one or more of the divisibility conditions fail to hold? For instance, we observe that \eqref{eq:I-2-o-e} is still valid even if the latter power $4m+2$ is replaced by a multiple of $4$. Namely, for $M=4\ell+6m+8$, $\sigma=-(2\ell+2m+3)k$, $\kappa\in\{1\}$, $\lambda\in\{0,1\}$ and any $k$ such that $\gcd(k,M)=1$, 
\begin{align}
	\mathbf{H}_{M}\Bigg(q^{\sigma}\cdot f\big((-1)^{\kappa}q^{2k},(-1)^{\kappa}q^{\mu M-2k}\big)^{2\ell+1} \bigg(\frac{f\big({-q^{2k}},-q^{\mu M-2k}\big)}{f\big((-1)^{\lambda}q^{k},(-1)^{\lambda}q^{\mu M-k}\big)}\bigg)^{4m+4}\Bigg)\overset{?}{=}0.
\end{align}
For this relation, we find that the corresponding criteria in \eqref{eq:div-assump} are no longer true. Thus, a subtle refinement of the results in Sections \ref{sec:linear-cong} and \ref{sec:dissecting} is necessary.

Second, it is easily seen that the original result of Richmond and Szekeres \cite{RS1978} related to \eqref{eq:RS} cannot be fit into our framework. Although one may transform \eqref{eq:RS} and its generalization to the summation form by Ramanujan's ${}_{1}\psi_{1}$ formula as Andrews and Bressoud \cite{AB1979} had done, an obstacle occurs due to the lack of an expansion formula for the reciprocal of a generic theta power such as those in Section \ref{sec:pairing}. Also, the coefficient-vanishing phenomenon appears in series associated with other classical summations such as the Appell--Lerch sum \cite{DX2022}. These examples suggest one extend the glimmer of our theory to the endless Galaxy.

\subsection*{Acknowledgements}

The authors would like to acknowledge the anonymous referee for the inspiring comments. In particular, a couple of typos in an earlier draft of this work were detected by the referee. Shane Chern was partially supported by a Killam Postdoctoral Fellowship from the Killam Trusts. Dazhao Tang was partially supported by the National Natural Science Foundation of China (No.~12201093), the Natural Science Foundation Project of Chongqing CSTB (No.~CSTB2022NSCQ--MSX0387), the Science and Technology Research Program of Chongqing Municipal Education Commission (No.~KJQN202200509), and the Doctoral start-up research Foundation (No.~21XLB038) of Chongqing Normal University.

\bibliographystyle{amsplain}

\end{document}